\documentclass[11pt,oneside,reqno]{amsart}
\usepackage{mathpazo} 
\normalfont
\usepackage[T1]{fontenc}

\usepackage[utf8]{inputenc}
\usepackage{amsmath}
\usepackage{amsfonts}
\usepackage{amssymb}
\usepackage{amsthm}
\usepackage{tikz}
\usepackage{tikz-cd}
\usepackage[all]{xy}
\usepackage[margin=1.2in]{geometry}
\usepackage{amscd}
\usepackage[shortlabels]{enumitem}

\DeclareMathOperator{\Hom}{Hom}

\newcommand{\angles}[1]{\left\langle #1 \right\rangle}

\newcommand{\supp}[1]{\mathrm{supp} #1 }

\input xy
\xyoption{all}
\theoremstyle{definition}
\newtheorem{mydef}{\textbf{Definition}}[section]
\newtheorem{myeg}[mydef]{\textbf{Example}}

\newtheorem{rmk}[mydef]{\textbf{Remark}}

\newtheorem{notation}[mydef]{\textbf{Notation}}

\theoremstyle{plain}
\newtheorem{mythm}[mydef]{\textbf{Theorem}}

\newtheorem*{nothma}{\textbf{Theorem A}}
\newtheorem*{nothmb}{\textbf{Theorem B}}
\newtheorem*{nothmc}{\textbf{Theorem C}}
\newtheorem*{nothmd}{\textbf{Theorem D}}

\newtheorem{lem}[mydef]{\textbf{Lemma}}
\newtheorem{pro}[mydef]{\textbf{Proposition}}

\newcommand{\R}{\mathbb{R}}

\begin{document}

\title{Closure Operators and Geometric Modules of Valuated Matroids}

\author{Jaiung Jun}
\address{Department of Mathematics, State University of New York at New Paltz, NY, USA}
\email{junj@newpaltz.edu}

\author{Jeffrey Tolliver}
\address{}
\email{jeff.tolli@gmail.com}
\makeatletter
\@namedef{subjclassname@2020}{%
	\textup{2020} Mathematics Subject Classification}
\makeatother

\subjclass[2020]{12K10, 14T10, 05B35, 06C10}
\keywords{Matroid, closure operator, valuated matroid, geometric lattice, flat, tropical linear space}
\thanks{}

\begin{abstract}
We introduce a closure operator for valuated matroids and prove that it yields a cryptomorphic definition of valuated matroids. As an application, we introduce a class of modules over the tropical semifield (geometric modules) and prove that there is one-to-one correspondence between projective equivalence classes of simple valuated matroids and isomorphism classes of finitely generated geometric modules. We further illustrate how this correspondence can be lifted to simple infinite valuated matroids and geometric modules. 
\end{abstract}

\maketitle

\section{Introduction}

Matroid theory has developed into a major area of combinatorics over the past century since its introduction by Whitney \cite{whitney1935abstract}  and Nakasawa \cite{nakasawa1935axiomatik}. Since work of Edmonds, it has played an important role in combinatorial optimization.

Some early work on valuated matroids was motivated by an area of combinatorial optimization known as discrete convex analysis. For instance, see \cite{murota2001circuit}. 
However a bigger source of interest these days comes from the observation that the tropicalizations of vector spaces are valuated matroids, and hence these serve as the most fundamental class of examples in tropical geometry \cite{speyer2008tropical}. In fact, in tropical geometry one may see valuated matroids as points in Grassmannian defined over the tropical semifield $\mathbb{R}_{\max}$ (Example \ref{example: tropical semimifield}). This perspective has a greater generalization, now known as matroids with coefficients, initiated by Dress and Wenzel \cite{dress1986duality}, and further developed by Baker,  Bowler, and Lorscheid \cite{baker2019matroids}, \cite{baker2021moduli}. This approach to matroid theory further found its applications, for instance, by Baker, Huh, Kummer, and Lorscheid \cite{baker2025lorentzian}.

Like matroids, valuated matroids have many cryptomorphic descriptions, such as via Grassmann-Pl\"ucker functions, circuits, cocircuits, vectors and covectors.  However, several of the classical ways of defining a matroid do not yet have known analogues for valuated matroids. This paper is focused on describing some additional ways to define a valuated matroid, namely closure operators and geometric modules over the tropical semifield. Our results still hold if one replaces $\mathbb{R}$ with any totally ordered abelian group, but we restrict ourselves to  $\mathbb{R}$. 

There is a known cryptomorphic description of oriented matroids in terms of closure operators \cite{buchi1988large}.  In the linear case, this amounts to mapping subset $S\subseteq E$ to the set of elements of $E$ which are non-negative linear combinations of elements of $S$. That is instead of taking all linear combinations, we must restrict the coefficients in a way that depends on the ordering of the field.  In the valuated case, the restriction we place on coefficients should involve the valuation, so it is natural to expect the closure of a linear valuated matroid to correspond to the $\mathcal{O}_K$-linear span.  In fact the operation of taking the $\mathcal{O}_K$-linear span has appeared in previous work on valuated matroids and on tropical geometry, e.g. \cite[Proposition 5.1]{hirai2019uniform} or \cite[Lemma 7.2]{JMT20}.

To define a closure operator for valuated matroids, of course we should not expect a closure operator on the ground set $E$ of a valuated matroid to be able to fully capture the structure.  Instead we define a closure operator on the related set
\[
\hat{M} = \mathbb{R}_{\max} \times E / \{(-\infty, x\}\mid x\in E\}.
\]
We use $\leq$ to denote the partial order on $\hat{M}$ defined as follows:
\[
ax\leq by \iff \text{either both sides are $\textbf{0}$ or $x=y$ and $a\leq b$,}
\]
where $ax:=(a,x)$ and $\textbf{0}:=(-\infty,x)$ for any $x \in E$. 

We first characterize closure operators on $\hat{M}$ which precisely yield valuated matroids.

\begin{nothma}[Proposition \ref{proposition: valuated matroid closure cryptomorphism}]Let $E$ be a finite set and $\hat{M}$ be as above.  There is a one-to-one correspondence between valuated matroids on $E$ and algebraic closure operators on $\hat{M}$ satisfying the following:
\begin{enumerate}
    \item $\textbf{0}\in\mathrm{cl}(\emptyset)$.
    \item For any $S\subseteq \hat{M}$ and $a\in \mathbb{R}_{\max}^\times$, $\mathrm{cl}(aS) = a \mathrm{cl}(S)$. 
    \item For any $S\subseteq \hat{M}$, $\mathrm{cl}(S)$ is downward closed.
    \item Let $S\subseteq\hat{M}$ and let $x,y\in E$ and $a\in \mathbb{R}_{\max}$.  Suppose there is some $b\in \mathbb{R}_{\max}$ such that $ax\in \mathrm{cl}(S\cup\{by\})$.  Then there is some $c\leq b$ such that $ax\in \mathrm{cl}(S\cup\{cy\})$ and $cy\in \mathrm{cl}(S \cup\{ax\})$.
    \item Let $S\subseteq\hat{M}$ and let $x\in E$ and $a, b\in \mathbb{R}_{\max}$.  Suppose $ax\in \mathrm{cl}(S \cup \{bx\})$ and that $b < a$.  Then $ax\in \mathrm{cl}(S)$.
\end{enumerate}
\end{nothma}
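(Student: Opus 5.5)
We would define the correspondence through the \emph{vectors} of a valuated matroid, i.e.\ the tropical linear space spanned by the valuated circuits, and then check the two compositions are the identity. Given a valuated matroid $M$ on $E$ with vector set $\mathcal{V}(M)\subseteq \mathbb{R}_{\max}^E$, we set, for $S\subseteq\hat M$, $b_y(S)=\max\{b : by\in S\}$ (with $b_y(S)=-\infty$ if no such element exists). We then put $ax\in\mathrm{cl}_M(S)$ iff $a=-\infty$, or there is $V\in\mathcal{V}(M)$ with $V(x)=0$ and $a+V(y)\le b_y(S)$ for all $y\neq x$. This is the tropical shadow of ``$x$ lies in the $\mathcal{O}_K$-span'': a dependency $x=\sum\lambda_y y$ with the valuations of the $\lambda_y$ bounded by the given weights.

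\textbf{Step 1: $\mathrm{cl}_M$ satisfies the axioms.} Extensivity comes from a circuit (or the zero vector) together with the convention $V(x)=0$. Monotonicity, downward closure (3) and homogeneity (2) are immediate from the inequality form of the definition. Algebraicity holds because only finitely many $y$ occur, since $E$ is finite. Idempotence needs a tropical substitution argument. If each $b_iy_i\in S'$ lies in $\mathrm{cl}(S)$ via vectors $V_i$, and $ax$ lies in $\mathrm{cl}(S')$ via $W$, we combine $W$ with the $V_i$ by repeated vector elimination along the $y_i$, which is exactly the tropical analogue of substituting one linear relation into another. Axiom (5) follows by taking a vector witnessing $ax\in\mathrm{cl}(S\cup\{bx\})$ and using $b<a$: the coordinate at $x$ cannot be the one attaining the maximum, so we may pass to a vector whose relevant support avoids $x$. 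Axiom (4) follows by choosing a circuit witness and reading the relation symmetrically in $x$ and $y$, with $c=a+C(y)-C(x)\le b$.

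\textbf{Step 2: reconstruction.} Given a closure operator satisfying (1)--(5), we define $\mathcal{V}$ to be the set of $V$ with $V\equiv-\infty$, or $V(x)=0$ for some $x$ and $0x\in\mathrm{cl}(\{V(y)y: y\neq x\})$, closed under scaling. Axioms (2) and (3) make this consistent with the inequality formulation above, and axiom (4) makes the definition independent of the chosen base point $x$. We then verify the vector axioms of a valuated matroid, or equivalently extract the minimal-support elements and verify the valuated circuit axioms. Finally we check that the two constructions are mutually inverse. One direction is tautological from the definitions. The other uses algebraicity to reduce to finite $S$, and (3) to reduce to $S$ having at most one element over each $y$.

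\textbf{Main obstacle.} The hard step is the valuated elimination axiom for $\mathcal{V}$. Given $V,W$ with $V(z)=W(z)$ and some $x$ with $V(x)\neq W(x)$, we need $U$ with $U(z)=-\infty$ and $U\le\max(V,W)$, with equality wherever $V\neq W$. Our first attempt is to normalize so that $V(x)=0$ and $W(x)<0$. We apply (4) to move $z$ to the other side in the relation for $W$, then use idempotence to substitute it into the relation for $V$. This gives $0x\in\mathrm{cl}(T\cup\{W(x)x\})$ with $T$ supported off $z$, and axiom (5) then removes the term $W(x)x$. Getting the equality (rather than merely the inequality) on $\{V\neq W\}$ is where we expect real difficulty. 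Our plan is to combine the above with a minimality argument over supports, using (4) to show that a strict drop in one coordinate would contradict minimality.
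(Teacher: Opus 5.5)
\textbf{There is a genuine gap in Step 2, and it is where the paper does its real work.}

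The family $\mathcal{V}$ you reconstruct from $\mathrm{cl}$ is not the set of vectors of any valuated matroid. You define it as those $V$ with $V(x)=0$ and $0x\in\mathrm{cl}(\{V(y)y : y\neq x\})$ for \emph{some} $x$. By (3) and monotonicity, this family is upward closed in every coordinate other than the base point. Concretely, let $E=\{x,y,z\}$ with $x,y$ parallel (valuated circuit $(0,0,-\infty)$ in your additive notation) and $z$ a coloop. Then:
\begin{itemize}
\item $(0,t,-\infty)\in\mathcal{V}$ for every $t\geq 0$;
\item $(0,0,0)\in\mathcal{V}$ via base point $x$;
\item yet $0z\notin\mathrm{cl}(\{0x,0y\})$, and $ty\notin\mathrm{cl}(\{0x\})$ for $t>0$.
\end{itemize}
So your claim that axiom (4) makes the definition independent of the base point is false: (4) only returns some $c\leq b$, not $b$ itself. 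The elimination axiom you plan to verify also fails. Eliminating the common coordinate $x$ from $(0,0,-\infty)$ and $(0,1,-\infty)$ would force $(-\infty,1,-\infty)\in\mathcal{V}$, i.e.\ $y$ would be a loop. Even the members of minimal support are not unique up to scaling.

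The right object is the paper's vector set (Definition~\ref{definition: vector set}), which requires $v\in\mathrm{cl}(V\backslash\{v\})$ for \emph{every} $v\in V$. The hard step is then to produce such sets from a single closure membership. This is Lemma~\ref{lemma: bend relations from element of closure of valuated matroid}: if $v=ax\in\mathrm{cl}(S\backslash\{v\})$ and no $bx\in S$ has $b>a$, then some vector set $T\subseteq S_\downarrow$ contains $v$. Its proof first uses (3) and (5) to reduce to one element over each point. It then shrinks coefficients one point at a time, using (4) to trade $v_{k+1}$ for the largest exchange multiple $cv_{k+1}$.

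Your elimination sketch stops at a membership $0x\in\mathrm{cl}(T)$. Without this lemma, that membership does not yet give a vector. The same lemma is needed for the inverse check $\mathrm{cl}_{\mathcal{V}}=\mathrm{cl}$, so that direction is not tautological once $\mathcal{V}$ is defined correctly.

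\textbf{You also do not need strong vector elimination with equality on $\{V\neq W\}$}, so the obstacle you flag can be bypassed rather than overcome. The paper proves only a weak form (Lemma~\ref{lemma: vector elimination property}): for vector sets $V_1,V_2$ sharing $e$, and $u$ maximal in $V_1\Delta V_2$, some vector set in $(V_1\cup V_2\backslash\{e\})_\downarrow$ contains $u$. It then passes to vector sets of minimal support (Lemma~\ref{lemma: circuit set inside vector set}). This already yields Murota's (VCE), because $X_v>Y_v$ makes $X_v v$ maximal in the symmetric difference.

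\textbf{Step 1 has a slip.} As written, $\mathrm{cl}_M$ is not extensive: $0x\in\mathrm{cl}_M(\{0x\})$ would require a vector supported on $\{x\}$. You must add the clause $ax\in S_\downarrow$. You must also replace $b_y(S)=\max$ by ``$a+V(y)\leq b$ for some $by\in S$'', since otherwise algebraicity fails for infinite $S$. With these fixes your $\mathrm{cl}_M$ coincides with the description in Lemma~\ref{lemma: circuits from valuated flats}. Your substitution argument for idempotence is then a workable alternative to the paper's intersection-of-flats definition.
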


Let $\mathbb{B}=\{0,1\}$ be the Boolean semifield viewed in $\mathbb{R}_{\max}$ (Example \ref{example: Boolean}). One may observe that the conditions in Theorem A precisely describe the closure operator of a matroid if we replace $\mathbb{R}_{\max}$ with $\mathbb{B}$. In fact, the first three conditions and the fifth are trivial in this case. Condition (4) is equivalent to the Mac Lane-Steinitz exchange property of closure operators of matroids.

To prove Theorem $A$, we first define flats of valuated matroid and define the closure $\text{cl}(S)$ for $S \subseteq \hat{M}$ to be the intersections of all flats containing $S$ (Definition \ref{definition: flats}). We then prove that $\text{cl}$ is indeed an algebraic closure operator satisfying the conditions in Theorem $A$, and all algebraic closure operators satisfying the conditions arise in this way.

In a linear matroid (associated to a vector space over a field $K$), the closure operator sends a set $S$ to the set of elements of the matroid which lie within the $K$-linear span of $S$.  By contrast, in the valuated matroid case, where we assume that $K$ is a valued field whose valuation map is surjective, $\mathrm{cl}(S)$ corresponds to the $\mathcal{O}_K$-linear span. 

\begin{nothmb}[Proposition \ref{proposition: flats of linear valuated matroids}]
Let $V$ be a vector space over a valued field $K$ with surjective valuation and $M$ the a linear valuated matroid induced by a finite set $E\subseteq V$.  Given an $\mathcal{O}_K$-submodule $N\subseteq V$, let 
\begin{equation}
    F_N = \{ v(c) x \mid x\in E, c\in K, cx\in N\}.
\end{equation}  
Then, the following hold. 
\begin{enumerate}
    \item 
$F\subseteq \hat{M}$ is a flat if and only if $F = F_N$ for some $N$.
    \item 
If $F\subseteq \hat{M}$ is a flat, there is exactly one finitely generated $\mathcal{O}_K$-submodule $N\subseteq V$ such that $N$ is generated by elements which have the form $ax$ for $a\in\mathbb{R}_{\max}$ and $x\in E$ and such that $F = F_N$.  In particular, flats of $M$ are in one-to-one correspondence with $\mathcal{O}_K$-submodules of $V$ which are generated by elements of this form.
\end{enumerate}
\end{nothmb}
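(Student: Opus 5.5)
The plan is to work directly from the definition of flats in Definition \ref{definition: flats}. I will use it in the following form: a flat is a downward closed $F\subseteq\hat M$ containing $\textbf{0}$ such that, for every vector (equivalently, after decomposition, every circuit) $X$ of $M$ and every $x\in\supp{X}$, if $X(y)y\in F$ for all $y\neq x$, then $X(x)x\in F$. For a linear valuated matroid the vectors are exactly the functions $x\mapsto v(c_x)$ coming from linear relations $\sum_{x\in E}c_xx=0$ in $V$. Everything reduces to translating between such relations and membership in $\mathcal{O}_K$-modules.

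\emph{Step 1: $F_N$ is a flat.} First, $F_N$ is downward closed. If $cx\in N$ and $a\le v(c)$, surjectivity of $v$ gives $d$ with $v(d)=a$, so $d=uc$ with $u\in\mathcal{O}_K$ and hence $dx\in N$. For the flat axiom, take a relation $\sum c_yy=0$ with $c_yy\in N$ for all $y\neq x$. Then $c_xx=-\sum_{y\ne x}c_yy\in N$, so $v(c_x)x\in F_N$. If the flat axiom is phrased only for circuits, the same argument applies, since circuits are special vectors.

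\emph{Step 2: every flat is of the form $F_N$.} Given a flat $F$, put
\[
N_F=\mathcal{O}_K\text{-span of }\{cx\mid x\in E,\ c\in K,\ v(c)x\in F\}.
\]
Clearly $F\subseteq F_{N_F}$. For the reverse inclusion, suppose $cx\in N_F$, so $cx=\sum_i d_iy_i$ with $v(d_i)y_i\in F$. Collect the terms with equal $y_i$. By the ultrametric inequality the combined coefficient has value at most the maximum of the old ones, so by downward closure it still lies in $F$. If $x$ itself appears among the $y_i$, it moves to the left side and the same argument applies. This produces a relation, i.e.\ a vector $X$, whose entries off $x$ lie in $F$, and the flat axiom gives $v(c)x\in F$.

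This is the step I expect to be the main obstacle. If the paper's flat axiom is stated only for valuated circuits, one must decompose the vector $X$ into circuits so that each intermediate elimination keeps the off-$x$ entries inside $F$. I would first try an induction on $|\supp{X}|$ using valuated circuit elimination, chosen so that the eliminated circuit through $x$ is dominated by $X$; domination suffices because $F$ is downward closed.

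\emph{Step 3: uniqueness and the generator description.} For each $x$, the set $\{a\mid ax\in F\}$ is a downward closed subset of $\mathbb{R}_{\max}$. Take its maximum $a_x$ and choose $c_x$ with $v(c_x)=a_x$. Then $N_F$ is generated by the finitely many elements $c_xx$, so it is a finitely generated module of the required form with $F=F_{N_F}$. Here I must check from the flat axioms (or from the finiteness conventions of the paper) that the maximum is attained. This is where finite generation comes from, and I would verify it at this point. For uniqueness, let $N$ be generated by elements $cx$ with $F_N=F$. Each generator satisfies $v(c)x\in F$, so it lies in $N_F$, giving $N\subseteq N_F$. Conversely each $c_xx\in N$, because $a_xx\in F=F_N$ and $v$ depends only on $c$ up to units, so $N_F\subseteq N$. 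The bijection in part (2) follows, since $N\mapsto F_N$ and $F\mapsto N_F$ are mutually inverse on these modules.
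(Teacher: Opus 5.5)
Your Step 3 fails exactly where you flagged it. The set $\{a\mid ax\in F\}$ need not have a maximum, and neither the flat axioms nor finiteness of $E$ provide one: $\hat{M}$ is infinite, and a flat need not be the closure of a finite set. For instance, $\hat{M}$ itself is a flat. Also, if $E=\{x\}$ with $x\neq 0$ there are no circuits, so $\{\textbf{0}\}\cup\{ax\mid a<b\}$ is a flat for any fixed $b\in\mathbb{R}$. In neither case is the supremum attained.

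This cannot be repaired, because the ``finitely generated'' clause of (2) is false for such flats. In the example, any $\mathcal{O}_K$-module generated by finitely many elements $c_jx$ equals $\mathcal{O}_Kcx$ with $v(c)=\max_j v(c_j)$. Then $F_{\mathcal{O}_Kcx}=\{ax\mid a\le v(c)\}$, which is neither of these flats. The paper's proof of (2) likewise just reuses the module $N$ from part (1), which is precisely your $N_F$, and never shows it is finitely generated. What is true, and what is actually proved, is the bijection in the ``in particular'' sentence: flats correspond to $\mathcal{O}_K$-submodules generated by elements $cx$, with no finiteness condition.

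For that version $N_F$ gives existence, but your uniqueness argument must avoid the $c_x$. For each generator $cx$ of $N_F$ you have $v(c)x\in F=F_N$. Hence $dx\in N$ for some $d$ with $v(d)=v(c)$, and $cx=(cd^{-1})\,dx\in N$ since $cd^{-1}\in\mathcal{O}_K^\times$ (the case $c=0$ is trivial). If you want finite generation, restrict to finitely generated flats $F=\mathrm{cl}(S)$ with $S$ finite. There, Lemma \ref{lemma: circuits from valuated flats} gives the maxima $a_x$ for every $x\in E$ that is not the zero vector, since only finitely many circuit supports are relevant and valuated circuits with a given support agree up to scaling. Your Step 3 then goes through, and zero vectors need no generator.

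Step 2 also needs more than you wrote. Definition \ref{definition: flats} imposes the exchange condition only for circuit sets, so passing from an arbitrary linear relation to a circuit is the real content of ($\Rightarrow$), not a side remark. Your plan is sound, and it is easiest to carry out inside $V$ rather than via abstract circuit elimination.

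Let $w\in K^E$ satisfy $\sum_y w_yy=0$ with $w_x\neq 0$, and write $v(w)$ for its coordinatewise valuation. If $\mathrm{supp}(w)$ is a circuit, $v(w)$ is itself a circuit. Otherwise choose a circuit relation $u$ with $\mathrm{supp}(u)\subsetneq\mathrm{supp}(w)$, scaled so that $v(u)\le v(w)$ coordinatewise with equality somewhere. If equality holds at $x$, then $v(u)$ is a circuit dominated by $v(w)$ with the same value at $x$. If not, pick $z$ where equality holds. Then $w'=w-(w_z/u_z)u$ satisfies $w'_z=0$, $v(w')\le v(w)$ and $v(w'_x)=v(w_x)$, so induction on the support applies. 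Downward closure then finishes the argument, as you said.

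This is a genuinely different route from the paper's. The paper proves by induction on $n$ that $x_{n+1}=\sum_{i\le n}c_ix_i$ implies $x_{n+1}\in\mathrm{cl}(v(c_1)x_1,\ldots,v(c_n)x_n)$. It does this by substituting a dependency among $x_1,\ldots,x_n$, splitting into the cases $v(d_i)\le 1$ for all $i$ and $v(d_i)>1$ for some $i$. Your dominated-circuit lemma is cleaner and slightly stronger, since it shows that a single circuit always suffices.
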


Jeffrey and Noah Giansiracusa have introduced a finitely generated $\mathbb{R}_{\max}$-module $Q_M$ associated to a valuated matroid, which is defined via a certain presentation in terms of generators and relations \cite{giansiracusa2018grassmann}, called \emph{bend relations}.\footnote{This relation reflects how the locus of a polynomial with coefficients in $\mathbb{R}_{\max}$ is defined in tropical geometry (see Section \ref{subsection: semirings}).}  For a classical matroid, this construction instead yields a finitely generated $\mathbb{B}$-module (equivalently a finite lattice).  It was proven in \cite[Theorem 1.1]{crowley2020module} that in the case of a classical matroid, $Q_M$ can be identified with the lattice of flats.  For valuated matroids, we have the following.

\begin{nothmc}[Proposition \ref{proposition: iso $Q_M$ to flats}]
Let $M$ be a valuated matroid on a finite set $E$.  Let $\mathcal{S}$ be the $\mathbb{R}_{\max}$-module of finitely generated flats.  Then $\mathcal{S}$ is canonically isomorphic to $Q_M$.
\end{nothmc}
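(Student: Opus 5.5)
The plan is to build an explicit surjection from the free module onto $\mathcal{S}$ and show that its kernel congruence is exactly the bend congruence defining $Q_M$. Recall that $Q_M$ is the quotient of the free module $\mathbb{R}_{\max}^E$, with basis $\{e_x\}_{x\in E}$, by the bend relations of the valuated circuits. For each circuit $C$ and each $y\in\operatorname{supp} C$ there is a relation
\[
\textstyle\sum_{x} C(x)e_x \sim \sum_{x\neq y} C(x)e_x .
\]
I make $\mathcal{S}$ into an $\mathbb{R}_{\max}$-module with join $F\vee G=\mathrm{cl}(F\cup G)$ as addition, the flat $\mathrm{cl}(\emptyset)$ as zero, and scalar action $a\cdot F=aF$. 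The latter is legitimate by property (2) of the closure operator in Proposition~\ref{proposition: valuated matroid closure cryptomorphism}. Define
\[
\Phi:\mathbb{R}_{\max}^E\to\mathcal{S},\qquad \textstyle\sum a_x e_x\mapsto \mathrm{cl}(\{a_x x\}_{x\in E}).
\]
This is a module map because closure of a union is the join of the closures, and $\mathrm{cl}(aS)=a\,\mathrm{cl}(S)$. It is surjective because a finitely generated flat is the closure of a finite subset of $\hat M$. Downward closedness (property (3)) lets me keep only the largest coefficient on each $x$, so every finitely generated flat is $\Phi$ of some vector.

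Next I check that $\Phi$ kills the bend relations, so that it factors through $Q_M$. For a circuit $C$ and $y\in\operatorname{supp} C$, I need
\[
C(y)y\in\mathrm{cl}(\{C(x)x : x\neq y\}).
\]
I expect this to follow directly from the definition of flats (Definition~\ref{definition: flats}), which is phrased in terms of valuated circuits: a flat containing all the other terms $C(x)x$ of a circuit must contain the remaining term at the valuation forced by the circuit. With this, $\Phi$ descends to a surjection $\bar\Phi:Q_M\to\mathcal{S}$.

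For injectivity I use a normal form. Given $v=\sum a_xe_x$, put $S_v=\{a_xx\}$ and define the saturation
\[
\textstyle\hat v=\sum_y b_y e_y,\qquad b_y=\max\{b : by\in \mathrm{cl}(S_v)\}.
\]
The maximum should be attained because flats of the form $\mathrm{cl}(S)$ with $S$ finite are closed, via property (5) and finiteness of $E$. By construction $\hat v$ depends only on $\Phi(v)$, so it suffices to prove $v\sim\hat v$ in $Q_M$. To do this I describe $\mathrm{cl}(S)$ constructively. Starting from $S$, I repeatedly adjoin an element $cy$ such that some circuit $C$ has $C(y)=c$ and $C(x)x$ already lies in the current set for all $x\neq y$ (after rescaling $C$). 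I claim this process stabilizes after finitely many steps, with at most one step improving each coordinate to its final value, and that the result is exactly $\mathrm{cl}(S)$. Each adjunction step is precisely one application of a bend relation, combined with idempotence $w+w=w$ to absorb the already-present terms $C(x)e_x$ into $v$. Hence $v\sim\hat v$, and therefore $\Phi(v)=\Phi(w)$ implies $v\sim\hat v=\hat w\sim w$. Canonicity of the isomorphism comes from the fact that $e_x\mapsto\mathrm{cl}(\{0x\})$ involves no choices.

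The main obstacle is the constructive description of the closure: showing that iterated single-circuit steps already yield a flat, and terminate. I would first try to show that the set obtained after saturation is closed under the flat axioms by a valuated circuit elimination argument. If $C(x)x$ lies in the saturated set for all $x\ne y$ but the forced value for $y$ is not attained, elimination against the circuits used in earlier steps should produce a single circuit witnessing the missing value. Termination should then follow because each coordinate can only increase, and each value it takes is determined by one of finitely many circuit-and-support combinations.
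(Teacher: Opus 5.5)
Your construction of $\Phi$, its surjectivity, and the check that it kills every bend relation (directly from the flat axiom) are fine, so $\bar\Phi:Q_M\to\mathcal{S}$ exists. The gap is the step you flag yourself: that single-circuit adjunctions reach all of $\mathrm{cl}(S)$ and terminate. You leave this as an expectation, and the termination argument you offer does not work as stated. The value produced by an adjunction depends on the current coordinates, which were produced by earlier adjunctions, so there is no a priori finite list of attainable values. Worse, if the rescaling of $C$ is not taken maximal, the process need not stop at all.

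The missing idea is that no iteration is needed. Lemma \ref{lemma: circuits from valuated flats} says $\mathrm{cl}(S)=S_\downarrow\cup\{v\mid \exists\, C\ni v,\ C\backslash\{v\}\subseteq S_\downarrow\}$. So every element of the closure outside $S_\downarrow$ is completed by one circuit whose other elements already lie in $S_\downarrow$. Proving this is where the real work lies:
\begin{enumerate}
\item take a counterexample circuit minimizing $|C\backslash S_\downarrow|$;
\item eliminate against a circuit witnessing some $u\in C\backslash S_\downarrow$, using Lemma \ref{lemma: circuit elimination} in the form that retains a prescribed element $\eta\notin(C_2)_\downarrow$;
\item use an injection to see that the resulting circuit is a smaller counterexample.
\end{enumerate}
Your sentence about elimination ``against the circuits used in earlier steps'' points in this direction but supplies none of it.

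There is also a smaller defect in the normal form. If $y$ is a loop, then $\{cy\}$ is a circuit set for every $c\in\mathbb{R}$, so every flat contains all multiples of $y$. Then $\{b\mid by\in\mathrm{cl}(S_v)\}=\mathbb{R}_{\max}$ has no maximum, and Theorem C allows loops. For non-loops the maximum is attained, but property (5) does not give this. It follows from the one-step description above, together with finiteness of the set of circuit supports and uniqueness of a valuated circuit on a given support up to scaling.

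Both issues disappear if you drop the normal form. Suppose $\Phi(v)=\Phi(w)$. Each $a_xx\in S_v$ lies in $\mathrm{cl}(S_w)$, so by the lemma either $a_xx\in(S_w)_\downarrow$, or a single bend relation plus idempotence gives $w\sim w+a_xe_x$. Hence $w\sim w+v$; symmetrically $v\sim v+w$, so $v\sim w$.

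This is exactly the paper's argument seen from the presentation side. The paper verifies the universal property of $\iota(x)=\mathrm{cl}(\{x\})$, and its well-definedness check $\hat f(F)=\bigvee_{cz\in F}cf(z)$ is this absorption step for an arbitrary covector $f$. Your formulation has the minor merit of checking explicitly that $\iota$ is a covector, which the paper leaves implicit.
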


We remark that this indirectly yields a relationship between flats and the tropical linear space $V_M$ associated to a valuated matroid $M$, since $V_M$ is the dual module of $Q_M$ in the sense that $V_M=\Hom_{\mathbb{R}_{\max}}(Q_M,\mathbb{R}_{\max})$.  However, unlike the case of $\mathbb{B}$-modules, module-theoretic duality is quite different from lattice-theoretic duality.

A related question is whether there is a valuated analogue of the geometric lattice characterization of matroids.  Since a finite lattice is the same as a finitely-generated $\mathbb{B}$-module, it is natural to look for a characterization of valuated matroids in terms of $\mathbb{R}_{\max}$-modules.  We define conditions analogous to atomicity and semimodularity for $\mathbb{R}_{\max}$-modules.  Besides these two conditions, we need an additional hypothesis (strong torsion-freeness), which is automatic for $\mathbb{B}$-modules but not for $\mathbb{R}_{\max}$-modules. 

Recall from \cite{dress1992valuated} that for two valuated matroid structures $w$ and $w'$ on $E$ with the same underlying matroid $M$, we say that $w$ and $w'$ are projectively equivalent if there exist a function $s:E \to (\mathbb{R}_{\max})^\times$ such that for any base $B$ of $M$, one has
\begin{equation}\label{eq: proj eq}
w(B) = \left(\prod_{i \in B}s(i)\right) w'(B). 
\end{equation}
One can further generalize this definition to introduce projective equivalence between two valuated matroids (Definition \ref{definition: proj eq}). We prove that the three module-theoretic conditions (atomicity, semimodularity, and strong torsion-freeness) precisely characterize simple valuated matroids up to projective equivalence as follows.  

\begin{nothmd}[Proposition \ref{proposition: $Q_M$ geometric}, Theorem \ref{theorem: geometric modules1}, Theorem \ref{theorem: geometric modules2}]
Let $M$ be a valuated matroid on a finite set $E$. Then, the following hold. 
\begin{enumerate}
    \item 
$Q_M$ is atomic, semimodular and strongly torsion-free as $\mathbb{R}_{\max}$-module.
\item 
Every finitely generated atomic semimodular and strongly torsion-free $\mathbb{R}_{\max}$-module arises in this way from a simple valuated matroid.  In addition, this valuated matroid is unique up to projective equivalence.
\end{enumerate}
\end{nothmd}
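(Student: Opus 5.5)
The plan is to pass through the flat module via Theorem C. By Proposition \ref{proposition: iso $Q_M$ to flats} we identify $Q_M$ with $\mathcal{S}$, the $\mathbb{R}_{\max}$-module of finitely generated flats of $M$. Here addition is the join $F\vee G=\mathrm{cl}(F\cup G)$ and scaling is $aF$, which is well defined by condition (2) of Theorem A. Every module-theoretic statement about $Q_M$ then becomes a statement about flats and the closure operator. For part (1) we would check the three conditions in turn.

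\emph{Atomicity.} Each element of $\mathcal{S}$ should be a finite join of closures $\mathrm{cl}(\{ax\})$. These principal flats play the role of atoms up to scaling, which is what we take atomicity to mean.

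\emph{Semimodularity.} This should follow from the exchange property, condition (4) of Theorem A. It is the analogue of the classical proof that flats of a matroid form a semimodular lattice. If $F$ is a flat and $G=\mathrm{cl}(F\cup\{by\})$ covers $F$ in the appropriate sense, then joining with another atom $ax$ either leaves $G\vee ax$ equal to $F\vee ax$ or produces a cover. Condition (5) controls the case where the same element $x$ appears with a different coefficient.

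\emph{Strong torsion-freeness.} This should come from conditions (2), (3) and (5). A relation such as $aF\vee G=bF\vee G$ with $a\neq b$ can be pushed down to a single element $cx$. Condition (5) then forces $cx$ into the smaller flat.

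For part (2), start with a finitely generated atomic, semimodular, strongly torsion-free module $Q$. Atomicity gives finitely many atom rays; choose one representative atom $e_x$ for each ray and let $E$ be the resulting index set. On $\hat{M}$ we define
\[
\mathrm{cl}(S)=\{ax : a e_x\le \textstyle\sum_{by\in S} b e_y\}.
\]
We would then verify conditions (1)--(5) of Theorem A, which yields a valuated matroid $M$ by Proposition \ref{proposition: valuated matroid closure cryptomorphism}. Downward closure and homogeneity are immediate. Condition (5) is exactly where strong torsion-freeness is needed, and condition (4) is where semimodularity enters, mimicking the proof that atoms of a geometric lattice satisfy Mac Lane--Steinitz exchange. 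Simplicity holds because distinct rays give non-parallel elements and no atom is $\mathbf{0}$. Next we show that $Q\cong\mathcal{S}\cong Q_M$, by sending $q$ to the set of $ax$ with $ae_x\le q$. This map is injective by atomicity, since $q$ is the join of the atoms below it, and it is surjective onto the finitely generated flats by construction.

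For uniqueness up to projective equivalence, the only choice made was the scaling of each $e_x$ within its ray. Rescaling $e_x$ by $s(x)$ replaces $M$ by a projectively equivalent valuated matroid in the sense of Definition \ref{definition: proj eq}. Conversely, an isomorphism $Q_M\cong Q_{M'}$ of modules of simple valuated matroids must permute the atom rays, and so induces such an $s$.

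We expect the main obstacle to be the exchange axiom (4) in part (2): producing the coefficient $c\le b$ from semimodularity alone. Our first attempt would take $c$ to be the minimal $c$ with $ae_x\le s+ce_y$, where $s$ is the join of $S$. Finiteness and the fact that the order is closed under limits in $\mathbb{R}_{\max}$ should guarantee that this minimum exists. Semimodularity applied to $s$ and $s+ae_x$ would then show that $ce_y\le s+ae_x$.
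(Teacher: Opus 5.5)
Your overall plan matches the paper's. You identify $Q_M$ with the module of finitely generated flats and derive the three conditions from the closure axioms. Conversely, you put the closure ``bounded by a sum'' on the weak atoms, check the axioms of Theorem \ref{proposition: valuated matroid closure cryptomorphism}, and get uniqueness from rescaling. However, part (1) is not actually proved at its key points.

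\textbf{Atomicity.} You write that principal flats ``play the role of atoms up to scaling, which is what we take atomicity to mean.'' That is not the definition. For a non-loop $x$ and $F=\mathrm{cl}(\{x\})$, you must show that $c\mapsto cF$ is a bijection from $\{c\le 1\}$ onto $[0,F]$.
\begin{enumerate}
\item Surjectivity is the real content. A finitely generated flat $\mathrm{cl}(S)\subseteq F$ may be generated by elements $ay$ with $y\neq x$ parallel to $x$. Axiom (4) with empty $S$ lets you trade each such $ay$ for some $bx$ with $b\le 1$ and $\mathrm{cl}(\{ay\})=\mathrm{cl}(\{bx\})$. Induction then reduces to generators that are all multiples of $x$, where (2) and (3) finish.
\item Injectivity is torsion-freeness, so it rests on strong torsion-freeness (via axiom (5)), which should be proved first.
\end{enumerate}

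\textbf{Semimodularity.} Your sketch starts from a weak cover already of the form $F\preceq\mathrm{cl}(F\cup\{by\})$. An arbitrary weak cover $\mathrm{cl}(S)\preceq\mathrm{cl}(T)$ must first be shown to be a one-element extension; this is Lemma \ref{lemma: covering semimodulear}, which uses (3), (5) and the weak-cover property. You must then show that \emph{every} finitely generated flat between $\mathrm{cl}(U)$ and $\mathrm{cl}(U\cup\{cx\})$ equals $\mathrm{cl}(U\cup\{bx\})$ for some $b\le c$, by exchanging away non-multiples of $x$ via (4). The classical dichotomy ``equal or a cover'' does not capture this, because a weak cover constrains the whole interval, which here is a continuum.

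\textbf{Axiom (4) in part (2).} Your argument relies on something unavailable. An $\mathbb{R}_{\max}$-module has no topology, and nothing makes $\{c : ae_x\le s+ce_y\}$ closed. For example, take the sets $(-\infty,t)$, $(-\infty,t]$ ($t\in\mathbb{R}$) and $\emptyset$, with union as sum and translation as scalar action; then $(-\infty,a]\le(-\infty,c)$ holds exactly when $c>a$, so no minimal $c$ exists.

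Moreover, the interval $[s,s+ae_x]$ is not known to contain anything built from $e_y$, so the weak cover $s\preceq s+ae_x$ that you invoke cannot yield $ce_y\le s+ae_x$.

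The step is immediate once you use the right weak cover, as the paper does:
\begin{enumerate}
\item Since $0\preceq be_y$, semimodularity gives $s\preceq s+be_y$.
\item Since $s\le s+ae_x\le s+be_y$, the weak-cover property produces $c'\le 1$ with $s+ae_x=s+c'be_y$.
\item Setting $c=c'b\le b$ gives $ax\in\mathrm{cl}(S\cup\{cy\})$ and $cy\in\mathrm{cl}(S\cup\{ax\})$ at once.
\end{enumerate}
For this, $S$ must first be replaced by a finite subset. Correspondingly, define $\mathrm{cl}(S)$ via finite subsums, since $\sum_{by\in S}be_y$ need not exist for infinite $S$.

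\textbf{Uniqueness.} Saying that an isomorphism $Q_M\cong Q_{M'}$ permutes atom rays is not enough; you must check that the induced $(\phi,s)$ preserves circuits. One way: for simple $M$, the map $ax\mapsto\mathrm{cl}(\{ax\})$ identifies $\hat M$ with the weak atoms of $Q_M$ and carries $\mathrm{cl}$ to ``bounded by a finite join,'' so Lemma \ref{lemma: iso val mat}(3) applies.
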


In proving Theorem D, one can observe that finiteness condition on $E$ for simple valuated matroids precisely corresponds to the finite generation condition for geometric modules. In particular, one may remove the finiteness condition for both sides, and obtain a similar correspondence. This is discussed in Appendix \ref{section: infinite val mat}. This perspective allows one to view any coordinate semiring of an affine tropical variety as a geometric $\mathbb{R}_{\max}$-module associated to an infinite valuated matroids whose underlying set consists of monomials. See, Example \ref{example: tropical var is val}.

\begin{rmk}
As this paper was nearing completion we became aware of the work of Anderson \cite{andersonthesis}, where he essentially proves a similar result to Theorem A, built on the ideas of Maclagan and Rinc\'on \cite{maclagan2016tropical}. But, we believe that our perspective is more useful to generalize Theorem A to matroids over idylls as in Baker and Lorscheid \cite{baker2021moduli}. For instance, see Remark \ref{remark: idyll} to see how downward closed condition may be rephrased. On the contrary, Anderson's approach does not seem to be immediately generalizable to idylls because it is a closure operator on the poset of maps $E \to \mathbb{R}_{\max}$, and the fact that this is even a poset is very specific to the tropical case. With our definition, it turns out that the join semilattice of finitely generated flats also carries a structure as a module over the idyll. 
\end{rmk}

\bigskip

\textbf{Acknowledgment} J.~Jun was partially supported by NSF LEAPS-MPS (DMS-2532394) and AMS-Simons Research Enhancement Grant for Primarily Undergraduate Institution (PUI) Faculty during the writing of this paper.

\bigskip

\section{Preliminaries}\label{section: preliminaries}

\subsection{Semirings and Algebraic closure operators} \label{subsection: semirings}

\begin{mydef} $ $
\begin{enumerate}
\item 
A \emph{semiring} is a nonempty set $S$ with two binary operations $+$ and $\cdot$ such that $(S,+,0)$ and $(S,\cdot,1)$ are commutative monoids such that 
\[
a\cdot 0 =0, \quad a\cdot(b+c)=a\cdot b +a\cdot c, \quad\forall~a,b,c \in S.
\] 
\item
A semiring $S$ is said to be a \emph{semifield} if $(S\backslash \{0\},\cdot)$ is a group.
\item 
A semiring $S$ is said to be (additively) \emph{idempotent} if $a+a=a$ for all $a \in S$, or equivalently $1+1=1$. 
\end{enumerate}
\end{mydef}

For a semiring $S$, by an $S$-module, we mean a monoid $M$ with action of $S$ satisfying the same axioms as modules over rings. Note that an idempotent semiring is equipped with a partial order: $x \leq y \iff x+y=y$. The following are two main examples.

\begin{myeg}[Tropical semifield]\label{example: tropical semimifield}
Let $\mathbb{R}_{\text{max}}=\mathbb{R} \cup \{-\infty\}$ and the multiplication of $\mathbb{R}_{\text{max}}$ is the usual addition of real numbers and the addition is given by taking the maximum, i.e.,  for $a, b \in \mathbb{R}_{\text{max}}$,
\[
a+b:=\max \{a,b\}.
\]
\end{myeg}

\begin{myeg}[Boolean semifield]\label{example: Boolean}
Let $\mathbb{B}=\{0,1\}$. Addition is given as follows:
\[
1+0=1, \quad 0+0=0, \quad 1+1=1. 
\]
Multiplication is as follows:
\[
1\cdot 1= 1, \quad 1\cdot 0 =0, \quad 0 \cdot 0=0. 
\]
\end{myeg}

Now, we recall two $\mathbb{R}_{\max}$-modules $Q_M$ and $V_M$ associated to a valuated matroid $M$. For $f=\sum_{u \in U \subseteq \mathbb{N}^n} a_ux^u \in \mathbb{R}_{\max}[x_1,\dots,x_n]$, the bend relations of $f$ are the relations of the following form:
\begin{equation}\label{eq: bend}
\left\{ f \sim \sum_{v \neq u}a_vx^v                 \right\}_{u \in U}
\end{equation}
Let $\mathcal{B}(f)$ be the congruence relation generated by \eqref{eq: bend}.\footnote{By a congruence relation we mean an equivalence relation which is compatible with addition and multiplication.} 

For a given valuated matroid $M$ on a finite set $E$, one obtains a set $I$ of linear equations in $\mathbb{R}_{\max}\langle E \rangle$. Then, with 
\[
\mathcal{B}(I)=\angles{ \mathcal{B}(f)}_{f \in I},
\]
the $\mathbb{R}_{\max}$-module $Q_M$ is defined as the quotient $\mathbb{R}_{\max}\langle E \rangle/\mathcal{B}(I)$. Its dual 
\[
V_M:=\Hom_{\mathbb{R}_{\max}}(Q_M,\mathbb{R}_{\max})
\]
is known as the tropical linear space associated $M$, and is naturally equipped with $\mathbb{R}_{\max}$-module structure by being the dual module of $Q_M$. In fact, by a tropical linear space, one means a subset of $(\mathbb{R}_{\max})^E$ which arises in this way for some valuated matroid $M$ on $E$. See \cite{bernd}. 

Next, we recall the definition of an algebraic closure operator which will be used throughout the paper. 
 
\begin{mydef}
Let $S$ be a set. An algebraic closure operator on $S$ is a function $\text{cl}:2^S \to 2^S$ satisfying the following conditions:
\begin{enumerate}
    \item 
$A \subseteq \text{cl}(A)$ for any $A \subseteq S$. 
\item 
For $A\subseteq B \subseteq S$, one has $\text{cl}(A) \subseteq \text{cl}(B)$. 
\item 
$\text{cl}( \text{cl}(A)   )=\text{cl}(A)$ for any $A\subseteq S$. 
\item 
$\text{cl}(A) = \bigcup \text{cl}(F)$, where $F$ runs all finite subsets of $A$.
\end{enumerate}
\end{mydef}

\subsection{Valuated matroids}\label{subsection: valuated matroid}

In the following, we recall the definition of a valuated matroid by utilizing $\mathbb{R}_{\max}$. Note that one may employ its algebraic structure to define valuated matroids as well in terms of Grassmann-Pl\"ucker functions as in \cite{baker2019matroids}. 

\begin{mydef}
 	A \emph{valuated matroid} $M$ of rank $d$ on $[n]$: $w:{[n] \choose d} \to \mathbb{R }\cup \{-\infty \}$ such that
		\begin{enumerate}
			\item 
			$w \not \equiv -\infty$,
			\item 
			For any $S,T \in {[n] \choose d}$ and $s \in S\backslash T$, $\exists~t \in T\backslash S$ such that 
			\[
			w(S)+w(T) \leq w((S\backslash \{s\}) \cup \{t\}) +w((T\backslash\{t\}) \cup\{s\}). 
			\]
    where the addition is the usual addition of real numbers. 
		\end{enumerate}   
    
\end{mydef}

The following provides a circuit cryptomorphism for valuated matroids. For a function $X \in (\mathbb{R}\cup \{-\infty\})^E$, we let $\underline{X}$ be the support of $X$, i.e., 
\[
\underline{X}:=\{a \in E \mid X(a) \neq -\infty\}.
\]
We often write $X_a$ for $X(a)$. 

\begin{mythm}\cite{murota2001circuit} \label{theorem: circuits of valuated}
Let $E$ be a finite set and $\mathcal{C} \subseteq (\mathbb{R}\cup \{-\infty\})^E$ satisfying the following conditions:
\begin{enumerate}
    \item[(VC1)]
$(-\infty, \dots, -\infty) \not \in \mathcal{C}$, 
     \item[(VC2)]
If $X,Y \in \mathcal{C}$ with $\underline{X} \neq \underline{Y}$, then $\underline{X} \not \subseteq \underline{Y}$. 
      \item[(VC3)]
For $X \in \mathcal{C}$ and $a \in \mathbb{R}$, $X+(a,\dots,a) \in \mathcal{C}$. 
       \item[(VCE)]
For $X,Y \in \mathcal{C}$ and $u,v \in E$ with $X_u=Y_u \neq -\infty$ and $X_v >Y_v$, $\exists~Z \in \mathcal{C}$ such that 
\begin{equation}
Z_u=-\infty, \quad Z_v=X_v, \quad Z\leq \max(X,Y),
\end{equation}
where $\max(X,Y) \in (\mathbb{R}\cup \{-\infty\})^E$ such that $\max(X,Y)(u)=\max(X_u,Y_u)$.
\end{enumerate}
Then, $\mathcal{C}$ uniquely determines a valuated matroid on $E$, and any valuated matroid arises in this way. 
\end{mythm}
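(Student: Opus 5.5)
The statement is Murota's circuit cryptomorphism; I would prove it in both directions, with the bulk of the work in passing from circuits to a basis valuation.

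\textbf{Step 1: the underlying matroid.} Set $\underline{\mathcal{C}}=\{\underline{X}\mid X\in\mathcal{C}\}$. By (VC1) the empty set is not in $\underline{\mathcal{C}}$, and (VC2) gives incomparability. For circuit elimination, take $X,Y$ with distinct supports sharing $u$, and use (VC3) to translate so that $X_u=Y_u$. Pick $v\in\underline{X}\setminus\underline{Y}$, so $X_v>Y_v=-\infty$. Then (VCE) gives $Z$ with $u\notin\underline{Z}\subseteq\underline{X}\cup\underline{Y}$ and $v\in\underline{Z}$. Hence $\underline{\mathcal{C}}$ is the circuit set of a matroid $\underline{M}$.

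\textbf{Step 2: circuits with a given support are unique up to translation.} Suppose $\underline{X}=\underline{Y}$ but $Y$ is not a translate of $X$. Normalize so that $X_u=Y_u$ for some $u$ in the support. Then some $v$ has $X_v\neq Y_v$, say $X_v>Y_v$. Applying (VCE) yields $Z$ whose support is strictly contained in $\underline{X}$ (it misses $u$ but contains $v$), contradicting (VC2).

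\textbf{Step 3: construct $w$.} Fix a base $B_0$ of $\underline{M}$ and set $w(B_0)=0$. For a base $B$, $f\notin B$, and $e$ in the fundamental circuit $C(B,f)$, let $X$ be a circuit with support $C(B,f)$; by Step 2 it is unique up to translation. Define
\[
w(B\setminus\{e\}\cup\{f\})=w(B)+X_e-X_f,
\]
which is translation-invariant and matches the linear case, where the coefficient of $v_e$ in $v_f$ has valuation $X_e-X_f$. Set $w=-\infty$ on non-bases. This defines $w$ along paths in the basis exchange graph, so one must check that the sum of increments around every closed walk is zero.

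\textbf{Main obstacle: path independence.} I would invoke Maurer's theorem that the cycle space of the basis graph is generated by cycles of length $3$ and $4$ (triangles and squares). Each such cycle involves at most two added and two removed elements. Verifying the zero-sum condition is then a local computation with two or three circuits contained in $B\cup\{f,g\}$. Here (VCE), applied to translates agreeing at a common element, forces the required additive relations among their coordinates. This is the delicate case analysis.

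\textbf{Step 4: the exchange axiom.} I would reduce to the local version using Murota's local exchange theorem: it suffices to check bases with $|S\setminus T|=2$. There one gets a three-term tropical Plücker relation on $S\cap T$ plus four elements. This follows once more from (VCE) on the two relevant fundamental circuits. Changing $B_0$ only translates $w$, so $w$ is well defined up to a constant.

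\textbf{Converse and bijectivity.} Given $w$, for each base $B$ and $f\notin B$ define $X_e=w(B\cup\{f\}\setminus\{e\})$, with the value $-\infty$ off $C(B,f)$, and add all translates. Independence of this from the choice of $B$ (up to translation), and the axioms (VC1)--(VCE), come from the Plücker exchange relations. Axiom (VCE) is obtained by taking $Z$ to be the circuit determined by the fundamental circuit eliminating $u$ and applying the exchange inequality coordinatewise. Finally, the two constructions are mutually inverse: applying Step 3 to these circuits returns the increments $w(B\setminus\{e\}\cup\{f\})-w(B)$, so the original $w$ is recovered up to an additive constant. Conversely, Step 2 shows the circuits are recovered from $w$.
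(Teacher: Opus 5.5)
The paper does not prove this statement; it quotes it from \cite{murota2001circuit} and uses it as a black box. Judged on its own, your direction from $\mathcal{C}$ to $w$ is a workable route. The triangle--square generation you cite also follows from the fact that the $2$-faces of a matroid base polytope are triangles and squares. The local checks you defer do close up:

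For a triangle $A+x,\,A+y,\,A+z$, normalize the circuits on $C(A+x,y)$ and $C(A+y,z)$ to agree at $y$, and apply (VCE) twice, once with $v=x$ and once with $v=z$. Both outputs are supported on the unique circuit of $A+x+z$, so by Step~2 they are translates. The bound $Z\le\max(X,Y)$ then forces them to coincide. This fixes both the $x$- and $z$-coordinates of the third circuit, and the increments sum to $0$.

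A square $A+a+c,\,A+a+d,\,A+b+d,\,A+b+c$ either splits into triangles through $A+a+b$ or $A+c+d$, or both of these sets are dependent. In the latter case $C(A+a+c,d)=C(A+b+d,c)$ and $C(A+a+d,b)=C(A+b+c,a)$, so the increments cancel in pairs by Step~2.

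The same double use of (VCE) gives the three-term relation in Step~4.

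The genuine gap is (VCE) in the converse. The sentence ``take $Z$ to be the circuit determined by the fundamental circuit eliminating $u$ and apply the exchange inequality coordinatewise'' does not describe an argument. Once $\underline{X}\cup\underline{Y}$ has nullity larger than $2$, there is no distinguished circuit of $(\underline{X}\cup\underline{Y})\setminus\{u\}$ through $v$, and which candidates work depends on $w$. For example, take rank $2$ on $\{1,\dots,5\}$ with $\underline{X}=\{1,2,3\}$, $\underline{Y}=\{1,4,5\}$, $u=1$, $v=2$. There are three candidate supports, and $\{2,3,4\}$ violates $Z_3\le X_3$ whenever $w(\{1,3\})+w(\{2,4\})>w(\{1,2\})+w(\{3,4\})$, which the Pl\"ucker relations allow. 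Even after a candidate is chosen, you must show that a single normalization of $Z$ satisfies $Z_e\le\max(X_e,Y_e)$ at every coordinate simultaneously. That is a system of inequalities, not one exchange inequality.

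Your idea does work for modular pairs (nullity exactly $2$). There $\underline{Z}$ is forced to be the unique circuit of $(\underline{X}\cup\underline{Y})\setminus\{u\}$, and the coordinate inequalities reduce to three-term Pl\"ucker relations together with the hypothesis $X_v>Y_v$. Passing from modular pairs to arbitrary pairs is where the real work lies. You need either such a reduction argument, or the tropical-linear-space route: orthogonality of valuated circuits and cocircuits, vector elimination, and the fact that every vector is the maximum of the circuits below it. Without one of these, the claim that every valuated matroid arises from a family satisfying (VCE) is not established.
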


One can easily observe that the condition (VCE) above implies the following circuit elimination ($VCE$'): for $X,Y \in \mathcal{C}$ and $u,v \in E$ with $X_u=Y_u \neq -\infty$ and $v \in \underline{X} \backslash \underline{Y}$, $\exists~Z \in \mathcal{C}$ such that 
\begin{equation}
Z_u=-\infty, \quad Z_v=X_v, \quad Z\leq \max(X,Y).
\end{equation}

\begin{rmk}
 While we typically require $E$ to be finite in our definition of valuated matroids, our results will also have generalizations to infinite valuated matroids.  The notion of infinite valuated matroid we consider is more general than in \cite{dress1992valuated}, for instance we allow valuated matroids of infinite rank (e.g. a linear matroid on a subset of an infinite dimensional space over a finite field. We refer the reader to Appendix \ref{section: infinite val mat} for infinite valuated matroids. 
\end{rmk}

For valuated matroids $M$ and $N$ on $E$ and $E'$ respectively, one can define projective equivalence as follows: there exists a bijection $\phi:E \to E'$ and a map $s:E \to \mathbb{R}_{\max}^\times$ such that
\begin{equation}\label{eq: proj eq}
w_M(B) =  \left(\prod_{i \in B}s(i)\right) w_N(\phi(B)). 
\end{equation}
One can easily see that this induces an isomorphism between the underlying matroids $\underline{M}$ and $\underline{N}$. 

In this paper, we will mainly use the circuit formulation of projective equivalence of valuated matroids as follows.

\begin{mydef}\label{definition: proj eq}
    Let $M$ and $M'$ be valuated matroids on finite sets $E$ and $E'$.  A \emph{projective equivalence} from $M$ to $M'$ consists of a pair $(\phi, s)$ where $\phi: E \rightarrow E'$ is a bijection and $s: E \rightarrow \mathbb{R}_{\max}^\times$ is such that a map $\eta: E \rightarrow\mathbb{R}_{\max}$ is a circuit of $M$ if and only if $\eta':E' \to \mathbb{R}_{\max}$ sending $y$ to $s(\phi^{-1}(y))\eta(\phi^{-1}(y))$ is a circuit of $M'$. 
\end{mydef}

One might instead define a projective equivalence as a bijection $\phi: E \rightarrow E'$ such that their exists some $s$ such that $(\phi, s)$ satisfies our definition.  This is not equivalent to our definition of projective equivalence, but the two definitions agree on which valuated matroids are projectively equivalent.  The notion of a projective equivalence class is all that is used in the statements of our main results, but the definition we give is most convenient for the proof.

\section{The closure operator of a valuated matroid}

Circuits of valuated matroids are typically defined as functions supported on the circuits of the underlying matroid as in Theorem \ref{theorem: circuits of valuated}. However, it will sometimes be more convenient to view circuits as sets. We begin by fixing some notation. 

\begin{notation}
  Let $M$ be a valuated matroid with underlying set $E$.  
\begin{enumerate}
    \item 
$\hat{M}$ denotes the quotient set of $\mathbb{R}_{\max} \times E$ in which $\{-\infty\} \times E$ is collapsed to a point (denoted simply $\textbf{0}$). We will also often denote $-\infty \in \mathbb{R}_{\max}$ by $\textbf{0}$.
\item 
For a subset $C\subseteq \hat{M}\backslash\{\textbf{0}\}$, we define its \emph{support} to be
\[
\mathrm{supp}(C) = \{x\in E\mid \exists a\in \mathbb{R}_{\max}\textrm{ such that }ax\in C\}.
\]
\item 
We denote the equivalence class of $(a, x)\in \hat{M}$ by $ax$. 
\item 
We use $\leq$ to denote the partial order on $\hat{M}$ defined as follows:
\[
ax\leq by \iff \text{either both sides are $\textbf{0}$ or $x=y$ and $a\leq b$.}
\]
\item 
We view $\mathbb{R}_{\max}$ as a semifield, in particular, we use the multiplicative notation for usual addition of real numbers. 
\end{enumerate}   
\end{notation}

Let $T$ be a poset. For any subset $S\subseteq T$, the downward closure $S_\downarrow$ of $S$ (in $T$) is defined as follows. 
\[
S_\downarrow :=\{t\in T\mid \exists s\in S\textrm{ such that }t\leq s\}.
\]

\begin{mydef}  
Let $M$ be a valuated matroid on a finite set $E$. A finite subset $C\subseteq \hat{M}$ is called a \emph{circuit set} if there is some circuit $c: E\rightarrow \mathbb{R}_{\max}$ such that $C = \{c(x)x \mid c(x)\neq \textbf{0}\}$.
\end{mydef}

\begin{myeg}
Let $M=U_{2,4}$ viewed as a valuated matroid. The function $c:\{1,2,3,4\} \to \mathbb{R}_{\max}$ such that 
\[
c(1)=1 \quad c(2)=1, \quad c(3)=1, \quad c(4)=\textbf{0}
\]
is a circuit, and $C=\{(c(1),1), (c(2),1), (c(3),1)\}$ is the corresponding circuit set. 
\end{myeg}

It is easy to see that there is a one-to-one correspondence between circuit sets and circuits. We will use the capital letter $C$ for circuit sets. 

The following lemma is a form of the circuit elimination axiom.

\begin{lem}[Circuit elimination]\label{lemma: circuit elimination}
Let $M$ be a valuated matroid on a finite set $E$.  Let $C_1, C_2\subseteq \hat{M}$ be distinct circuit sets and let $w\in C_1 \cap C_2$.  Then there is some circuit set $C$ such that $C\subseteq (C_1 \cup C_2 \backslash \{w\})_\downarrow$. Moreover, if $\eta \in C_1 \backslash ({C_2})_\downarrow$ then we may choose $C$ so that $\eta \in C$.
\end{lem}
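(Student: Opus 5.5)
The plan is to translate the statement back into the language of circuit functions and apply the elimination axioms (VCE) and (VCE$'$) of Theorem \ref{theorem: circuits of valuated}.

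Let $X, Y: E\to\mathbb{R}_{\max}$ be the circuits with circuit sets $C_1, C_2$. Write $w = au$ with $a\neq \textbf{0}$. Then $X_u = Y_u = a$. (We should first dispose of the case $w=\textbf{0}$: circuit sets as defined never contain $\textbf{0}$, so $w\neq\textbf{0}$ automatically.)

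The key observation is how the downward closure reads in function language. An element $cy\neq\textbf{0}$ lies in $(C_1\cup C_2\backslash\{w\})_\downarrow$ exactly when one of the following holds:
\begin{itemize}
\item[(i)] $y\neq u$ and $c\leq \max(X_y,Y_y)$;
\item[(ii)] $y=u$ and $c$ lies below some element of $C_1\cup C_2$ other than $au$. Since the only elements of $C_1\cup C_2$ with support $u$ are $X_uu=Y_uu=au$, this case is impossible.
\end{itemize}
Thus a circuit set $C$ with circuit $Z$ satisfies $C\subseteq (C_1\cup C_2\backslash\{w\})_\downarrow$ if and only if $Z_u=\textbf{0}$ and $Z\leq \max(X,Y)$. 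This is precisely the conclusion of (VCE), so it remains to produce the index $v$ required there.

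Since $C_1\neq C_2$, we have $X\neq Y$, so there is some $v$ with $X_v\neq Y_v$. By symmetry we may swap $X$ and $Y$ to get $X_v>Y_v$; necessarily $v\neq u$. Applying (VCE) with these $u, v$ gives the desired $Z$, which proves the first assertion.

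For the "moreover" part, let $\eta = bv\in C_1\backslash (C_2)_\downarrow$, so $b = X_v\neq\textbf{0}$. Note that $v\neq u$, since $X_uu = w\in C_2$. The condition $\eta\notin (C_2)_\downarrow$ says exactly that $X_v > Y_v$, which includes the case $Y_v=\textbf{0}$. So (VCE) applies with this particular $v$ and without any swapping. It yields $Z$ with $Z_v = X_v = b$, hence $\eta\in C$.

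I expect the only delicate step to be the bookkeeping in the translation between sets and functions. One must check that the elements with support $u$ are genuinely excluded, i.e.\ that removing $w$ removes everything at $u$, since $\textbf{0}$ is identified across all $x$. One must also confirm that "not below $C_2$" is exactly the strict inequality $X_v>Y_v$.
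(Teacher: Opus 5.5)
Your proof is correct and follows the paper's route: you translate to circuit functions, check that $C\subseteq (C_1\cup C_2\backslash\{w\})_\downarrow$ means exactly $Z_u=\textbf{0}$ and $Z\le\max(X,Y)$, and apply valuated circuit elimination at the support point $v$ of $\eta$; your direct choice of $v$ with $X_v\neq Y_v$ for the first assertion is equivalent to the paper's deduction of the first assertion from the second. Citing (VCE) rather than (VCE$'$) as the paper does is the right choice, since $\eta\notin (C_2)_\downarrow$ only gives $X_v>Y_v$ and not $Y_v=\textbf{0}$.
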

\begin{proof}
We prove the second assertion on $\eta$ which implies the first assertion since either $C_1\not\subseteq (C_2)_\downarrow$ or $C_2\not\subseteq (C_1)_\downarrow$. Let $c_1, c_2: E\rightarrow \mathbb{R}_{\max}$ be the corresponding circuits.  Let $w = kz$ and $\eta = by$ for $(k, z), (b, y)\in \mathbb{R}_{\max}\times E$.  Since $c_1(z) = k = c_2(z)$, the circuit elimination (VCE') gives a circuit $c$ such that
\[
c(z)=\textbf{0}, \quad c(y)=b,\quad \text{and} \quad c(t)\leq \max(c_1(t), c_2(t))\quad \forall t \in E.
\]
Let $C$ be the corresponding circuit set, and let $u=ax\in C$ (i.e. $c(x) = a \neq \textbf{0}$).  Either $c(x)\leq c_1(x)$ or $c(x)\leq c_2(x)$; without loss of generality we assume the former.  Letting $v = c_1(x)x$, we have $v \in C_1$ and $u\leq v$.  Moreover, $x\neq z$ (because $c(x)\neq \textbf{0}$), so $v\neq w$. This shows that $C\subseteq (C_1 \cup C_2 \backslash \{w\})_\downarrow$. For the remaining claim, $\eta\in C$ since $c(y) = b$.
\end{proof}

We now define flats of valuated matroids.

\begin{mydef}\label{definition: flats}
Let $M$ be a valuated matroid on a finite set $E$. A \emph{flat} is a nonempty subset $F\subseteq\hat{M}$ satisfying the following two conditions:
\begin{enumerate}
    \item 
$F$ is downward closed (i.e. $bx\in F$ and $a\leq b$ implies $ax\in F$). 
\item 
For any circuit set $C$ and any $u\in C$, if $C\backslash \{u\}\subseteq F$ then $u\in F$. 
\end{enumerate}
\end{mydef}

\begin{rmk}\label{remark: idyll}
The condition that $F$ is downward closed is similar in spirit to the other condition.  Specifically, it is equivalent to the condition that if the maximum among $a_1, \ldots, a_n$ occurs twice and if all but one element of $\{a_1 x, \ldots, a_n x\}$ lies in $F$ then so does the remaining element.
\end{rmk}

\begin{mydef}\label{definition: closure}
Let $M$ be a valuated matroid on a finite set $E$.
\begin{enumerate}
    \item 
For any $S\subseteq\hat{M}$, we define its \emph{closure} $\mathrm{cl}(S)$ as the intersection of all flats containing $S$.
    \item 
A flat is \emph{finitely generated} if it is the closure of a finite set. 
\end{enumerate}
\end{mydef}

To illustrate the definition, we consider the case of linear valuated matroids.  In this setting we have a vector space $V$ over a valued field $K$ and a finite subset $E\subseteq V$.  A circuit set is a set $\{a_1 x_1, \ldots, a_n x_n\}$ such that there is a linear equation $\sum \alpha_i x_i = 0$ with $v(\alpha_i)=a_i$ and with minimal support (i.e. $x_1, \ldots, x_n$ form a circuit of the underlying matroid). The following proposition shows that in this setting flats are related to $\mathcal{O}_K$-submodules in the same way that flats of ordinary matroids are related to subspaces.

\begin{pro}\label{proposition: flats of linear valuated matroids}
Let $V$ be a vector space over a valued field with surjective valuation $v$ and $M$ the a linear valuated matroid induced by a finite set $E\subseteq V$.  Given an $\mathcal{O}_K$-submodule $N\subseteq V$, let 
\begin{equation}
    F_N = \{ v(c) x \mid x\in E, c\in K, cx\in N\}.
\end{equation} 
Then
\begin{enumerate}
    \item 
$F\subseteq \hat{M}$ is a flat if and only if $F = F_N$ for some $N$.
    \item 
If $F\subseteq \hat{M}$ is a flat, there is exactly one finitely generated $\mathcal{O}_K$-submodule $N\subseteq V$ such that $N$ is generated by elements which have the form $ax$ for $a\in\mathbb{R}_{\max}$ and $x\in E$ and such that $F = F_N$.  In particular, flats of $M$ are in one-to-one correspondence with $\mathcal{O}_K$-submodules of $V$ which are generated by elements of this form.
\end{enumerate}
\end{pro}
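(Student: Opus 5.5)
Throughout I use the max-convention for $v$, so that $v(c_1+c_2)\le\max(v(c_1),v(c_2))$ and $\mathcal{O}_K=\{c\mid v(c)\le 0\}$ (i.e.\ $\mathbf{1}=0$ in $\mathbb{R}_{\max}$). This is the convention under which $ax\le bx$ for $a\le b$ matches $\mathcal{O}_K$-multiples. The plan is to prove the two directions of (1) separately and then deduce (2) from the proof of (1).

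\textbf{$F_N$ is a flat.} This direction is direct.
\begin{enumerate}
\item $F_N$ is nonempty, since $\textbf{0}\in F_N$ (take $c=0$).
\item $F_N$ is downward closed. If $cx\in N$ and $v(c')\le v(c)$, then $c'/c\in\mathcal{O}_K$, so $c'x\in N$.
\item $F_N$ satisfies the circuit condition. Let $\{a_ix_i\}$ be a circuit set coming from a relation $\sum\alpha_ix_i=0$ with $v(\alpha_i)=a_i$, and suppose $a_ix_i\in F_N$ for all $i\neq u$. Choose $c_i$ with $v(c_i)=a_i$ and $c_ix_i\in N$. Then $\alpha_ix_i=(\alpha_i/c_i)\,c_ix_i\in N$, because $\alpha_i/c_i$ is a unit. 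Hence $\alpha_ux_u=-\sum_{i\ne u}\alpha_ix_i\in N$, so $a_ux_u\in F_N$.
\end{enumerate}

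\textbf{Every flat is some $F_N$.} Given a flat $F$, I take $N$ to be the $\mathcal{O}_K$-span of $\{cx\mid x\in E,\ v(c)x\in F\}$; this uses surjectivity of $v$ to pick such $c$. Clearly $F\subseteq F_N$. For the reverse inclusion, suppose $cx\in N$. Writing $cx$ as an $\mathcal{O}_K$-combination of the generators and collecting terms gives a kernel vector $\gamma\in K^E$ of the map $K^E\to V$ with the following properties:
\begin{itemize}
\item $\gamma_x$ has valuation $v(c)$, or possibly larger if $x$ also occurs among the generators. In that case I first replace $c$ suitably, using downward closedness at the end.
\item $v(\gamma_e)e\in F$ for every $e\neq x$.
\end{itemize}

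The key claim is a \emph{valuated conformal lemma}: if $\gamma$ is a kernel vector with $x\in\operatorname{supp}\gamma$, then there is a circuit vector $\delta$ with $\delta_x=\gamma_x$ and $v(\delta_e)\le v(\gamma_e)$ for all $e$. Granting this, the circuit set of $\delta$ has every element other than $v(\gamma_x)x$ lying below an element of $F$, so those elements are in $F$ by downward closedness. The circuit condition then gives $v(\gamma_x)x\in F$.

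I prove the lemma by induction on $|\operatorname{supp}\gamma|$. If $\operatorname{supp}\gamma$ is a circuit, take $\delta=\gamma$. Otherwise pick a circuit vector $\delta$ with $\operatorname{supp}\delta\subsetneq\operatorname{supp}\gamma$. Scale it by $t$ so that $v(t\delta_e)\le v(\gamma_e)$ for all $e$, with equality at some $e_0$. There are two cases.
\begin{itemize}
\item If equality can be taken at $e_0=x$, rescale so that $t\delta_x=\gamma_x$; then $t\delta$ is the required circuit.
\item Otherwise $v(t\delta_x)<v(\gamma_x)$, or $x\notin\operatorname{supp}\delta$. Choose $t$ with $t\delta_{e_0}=\gamma_{e_0}$ exactly and set $\gamma'=\gamma-t\delta$. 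Then $\gamma'$ has smaller support, $v(\gamma'_e)\le v(\gamma_e)$ for all $e$, and $\gamma'_x$ keeps its valuation. Apply induction to $\gamma'$.
\end{itemize}
This step is the main technical point, and the place where care with the strict inequality at $x$ is needed.

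\textbf{Part (2).} Uniqueness: any $N$ generated by elements of the form $ax$ with $F_N=F$ must be generated precisely by (representatives of) the elements of $F$, since $ax\in N$ iff $ax\in F_N=F$. Hence $N$ is the span constructed above. Finite generation is the step I expect to be the real obstacle. It requires showing that for each $x$ the set $\{a\mid ax\in F\}$ is either all of $\mathbb{R}_{\max}$, or of the form $\{a\le a_x\}$ with a maximum attained, or just $\{\textbf{0}\}$.
\begin{itemize}
\item The first case forces $x$ to be a loop, i.e.\ $x=0$ in $V$, contributing nothing to $N$.
\item For attainment of the maximum, I would first try to show that the supremum is bounded by a finite expression in circuit values of elements of $F$ with maximal coefficients, using finiteness of $E$. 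If that fails, the statement should be read with this closedness as part of what must be verified for finitely generated flats.
\end{itemize}
Once each $x$ contributes at most one generator $a_xx$, $N$ is generated by finitely many elements, and the correspondence in (2) follows.
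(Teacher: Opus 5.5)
\textbf{Gap in part (2).} Your plan for finite generation fails, and no cleverer argument would rescue it. The trichotomy it rests on is false, and so is the finiteness clause it is meant to prove.
\begin{enumerate}
\item $\{a\mid ax\in F\}=\mathbb{R}_{\max}$ does not force $x$ to be a loop, because $\hat M$ itself is always a flat.
\item The maximum need not be attained. If $E=\{x\}$ with $x\neq 0$, there are no circuits, so $F=\{ax\mid a<a_0\}\cup\{\mathbf{0}\}$ is a flat for every $a_0\in\mathbb{R}$.
\end{enumerate}
For any $\mathcal{O}_K$-submodule $N$ we have $cx\in N$ if and only if $v(c)x\in F_N$. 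Hence the only $N$ generated by multiples of $x$ with $F_N=F$ is $\{cx\mid v(c)<a_0\}$. This is not finitely generated: a finitely generated submodule of $Kx$ is principal and would give a closed ray. Similarly, $F=\hat M$ forces $N=\mathrm{span}_K(E)$.

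So the clause ``finitely generated'' in (2) cannot be proved for arbitrary flats. The paper's own proof of (2) does not attempt it. Existence there is the span built in part (1), and uniqueness is proved among submodules generated by elements $cx$. That is exactly the ``in particular'' correspondence, which your existence and uniqueness steps already give, so drop the hedge.

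If you want a finiteness statement, prove the correct one, for finitely generated flats. For finite $S$, let $N_S$ be the $\mathcal{O}_K$-span of lifts of the elements of $S$.
\begin{enumerate}
\item $F_{N_S}$ is a flat containing $S$.
\item For any flat $F'\supseteq S$, $N_S$ lies in the span your part (1) builds from $F'$, so $F_{N_S}\subseteq F'$. Hence $F_{N_S}=\mathrm{cl}(S)$.
\item Conversely, a finitely generated $N$ spanned by elements $cx$ is spanned by finitely many of them.
\end{enumerate}
Thus finitely generated flats correspond exactly to finitely generated such $N$.

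\textbf{Part (1).} Part (1) is correct, and its hard direction takes a different route from the paper's. The paper proves by induction on $n$ that $x_{n+1}=\sum_{i\le n}c_ix_i$ implies $x_{n+1}\in\mathrm{cl}(v(c_1)x_1,\ldots,v(c_n)x_n)$. It uses a dependency among $x_1,\ldots,x_n$ to eliminate the term with the largest $v(d_k)$, then appeals to transitivity of $\mathrm{cl}$. Your valuated conformal lemma does the same kind of elimination as an induction on support size. It yields more: a single circuit set witnessing $v(\gamma_x)x\in F$, i.e.\ the one-step description of Lemma~\ref{lemma: circuits from valuated flats}, with no iterated closures.

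Two small repairs are needed.
\begin{enumerate}
\item The induction returns $\delta'_x=\gamma'_x$, which equals $\gamma_x$ only up to a unit. State the lemma as $v(\delta_x)=v(\gamma_x)$, or rescale at the end.
\item If $x$ occurs among the generators with $x$-part $d$, then $v(\gamma_x)$ can drop below $v(c)$ when $v(d)=v(c)$. In that case $v(c)x\le v(d)x\in F$ directly.
\end{enumerate}
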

\begin{proof}
(1): ($\implies)$ Let $c_1, \ldots, c_n\in K$ and $x_1, \ldots, x_{n + 1}\in E$.  We claim that if $x_{n + 1} = \sum_{i=1}^n c_ix_i$, then
\begin{equation}
x_{n + 1}\in \mathrm{cl}(v(c_1)x_1, \ldots, v(c_n)x_n).    
\end{equation}
For $n = 0$, this is trivial since $\textbf{0}\in\mathrm{cl}(\emptyset)$.  Suppose the claim holds with $n$ replaced by $n - 1$.  Note that $c_1, \ldots, c_n$ are nonzero by the inductive hypothesis, since the claim remains unchanged after omitting terms equal to zero.

If $x_1, \ldots, x_n$ are linearly independent, then $\{x_1, \ldots, x_{n+1}\}$ form a circuit in the underlying ordinary matroid; any smaller dependent set must contain $x_{n + 1}$, and will give a different expression $x_{n + 1} = \sum b_i x_i$, allowing us to cancel the $x_{n+1}$, and contradict independence of $x_1, \ldots, x_n$.  Hence $\{v(c_1)x_1, \ldots, v(c_n)x_n, x_{n + 1}\}$ is a circuit.  Any flat containing $v(c_1)x_1, \ldots, v(c_n)x_n$ also contains $x_{n + 1}$, establishing the claim in this case.

Hence we suppose $x_1, \ldots, x_n$ are linearly dependent.  Without loss of generality, we may assume
\begin{equation}
 c_n x_n = d_1 c_1 x_1 + \ldots + d_{n-1} c_{n-1}x_{n-1}   
\end{equation}
for some $d_1, \ldots, d_{n-1}\in K$.   First, consider the case where $v(d_i) \leq 1$ for all $i$.  We have $x_{n + 1} = \sum_{i=1}^{n-1} (1 + d_i)c_i x_i$.  By the inductive hypothesis, 
\begin{equation}x_{n+1}\in \mathrm{cl}(v((1+d_1)c_1)x_1, \ldots, v((1+d_{n-1})c_{n-1})x_{n-1}).\end{equation} 
Since $v(1 + d_i) \leq 1$ for all $i$, we have $v((1 + d_i)c_i) x_i \in \mathrm{cl}(v(c_1) x_1, \ldots, v(c_{n})x_{n})$ since each flat is downward closed. Consequently 
\begin{equation}
    x_{n+1}\in \mathrm{cl}(v((1+d_1)c_1)x_1, \ldots, v((1+d_{n-1})c_{n-1})x_{n-1})\subseteq\mathrm{cl}(v(c_1) x_1, \ldots, v(c_{n})x_{n}).
\end{equation}

Finally consider the case where there is some $i$ such that $v(d_i) > 1$.  Let $k$ be the index which maximizes $v(d_k)$.  We have
\begin{equation}
    c_k x_k = d_k^{-1} c_n x_n - d_k^{-1} d_1 c_1 x_1 - \ldots - d_k^{-1} d_{n-1}c_{n-1}x_{n-1},
\end{equation}
where the $x_k$ term doesn't appear on the right.  Define, for $i \neq k$ and $i \leq n$, 
\[
b_i:=\begin{cases}
-d_k^{-1}d_ic_i & \text{ if $i<n$},\\
d_k^{-1}c_n  & \text{ if $i=n$}.
\end{cases}
\]
Note that $v(b_i) \leq v(c_i)$ for all $i\neq k$, and consequently $v(b_i + c_i) \leq v(c_i)$.  Since $c_k x_k = \sum_{i\neq k,i\leq n} b_ix_i$, we have
\begin{equation}
    x_{n+1} = \sum_{i\neq k, i\leq n} (c_i + b_i) x_i.
\end{equation}
By the inductive hypothesis and the fact that $v(b_i + c_i)\leq v(c_i)$,
\begin{equation}
    x_{n+1} \in \mathrm{cl}(\{v(c_i + b_i)x_i \mid i\leq n, i\neq k\})\subseteq\mathrm{cl}(\{v(c_i)x_i\mid i\leq n\})
\end{equation}
This establishes the final case of the claim.

 Next observe that by applying the above claim with $c_{n+1}^{-1}c_i$ in place of $c_i$ we see that if $c_{n+1}x_{n+1} = \sum_{i=1}^n c_i x_i$ then $v(c_{n+1}) x_{n + 1}\in \mathrm{cl}(v(c_1)x_1, \ldots, v(c_n)x_n)$.

Let $F$ be a flat, and let $N\subseteq V$ be the set of elements of the form $\sum c_ix_i$ with $c_i\in K$, $x_i\in E$ such that $v(c_i)x_i\in F$ for all $i$.  Observe that if $r\in \mathcal{O}_K$ and $\sum c_i x_i\in N$ (with $c_i, x_i$ as above), then $v(rc_i) \leq v(c_i)$ and the downward closed property of flats ensures $v(rc_i)x_i\in F$, and hence $\sum rc_i x_i \in N$. In particular, $N$ is an $\mathcal{O}_K$-submodule of $V$.  We wish to show $F = F_N$.  If $ax\in F$, by surjectivity of $v$, we may choose $c\in K$ with $v(c) = a$ so $v(c)x\in F$.  Then $cx\in N$, so $F\subseteq F_N$.

For the reverse inclusion, choose an element $ax\in F_N$.  Then $a = v(c_{n+1})$ for some $c_{n+1}\in K$ with $c_{n+1}x\in N$.  By definition of $N$, there exist $c_1, \ldots, c_n\in K$ and $x_1, \ldots, x_n\in E$ such that $c_{n+1}x = \sum_{i=1}^n c_ix_i$ and $v(c_i)x_i\in F$ for $i\leq n$.  By the claim, $ax = v(c_{n+1})x \in \mathrm{cl}(v(c_1)x_1, \ldots, v(c_n)x_n)$.  Since $v(c_i)x_i\in F$ for $i\leq n$, we obtain $ax\in F$.  Hence $F_N = F$ as claimed.

($\impliedby$)  We show that for any $\mathcal{O}_K$-submodule $N\subseteq V$, $F_N$ is a flat. Clearly, $F_N$ is downward closed since the valuation is surjective. Choose a circuit set $\{a_1x_1, \ldots, a_n x_n\}$ with $a_i x_i\in F_N$ for $i < n$.  It suffices to show that $a_n x_n \in F_N$.  By the definition of a linear valuated matroid, there exist $c_1, \ldots, c_n$ with $v(c_i) = a_i$ and $\sum c_i x_i = 0$.  For $i < n$, since $a_ix_i\in F_N$, there exist $d_i$ with $v(d_i)=a_i=v(c_i)$ such that $d_ix_i\in N$.  Since $c_id_i^{-1}\in \mathcal{O}_K^\times$, we also have $c_ix_i\in N$ for $i < n$.  By closure under addition, $-c_n x_n\in N$ and hence $a_nx_n = v(-c_n)x_n\in F_N$ as desired.  

(2): In the proof of surjectivity in part (1), the $N$ we constructed is generated by elements of the form $ax$ with $a\in\mathbb{R}_{\max}$ and $x\in E$.  This shows existence.  Let $N, N'$ be two submodules with generators of this form, and suppose $F_N = F_{N'}$.  It suffices to show the generators of $N$ are contained in $N'$ and hence that $N\subseteq N'$, as the reverse inclusion follows by symmetry.  Suppose $ax\in N$ for some $a\in\mathbb{R}_{\max}$ and $x\in E$.  Then $v(a)x\in F_N = F_{N'}$.  Hence there exists $b\in K$ with $v(b) = v(a)$ and $bx\in N'$.  Since $ab^{-1}\in \mathcal{O}_K^\times$, $ax\in N'$ as desired.
\end{proof}

\begin{lem}Let $M$ be a valuated matroid on a finite set $E$.
\begin{enumerate}
    \item Any intersection of flats is a flat.
    \item A filtered union of flats is a flat.
    \item $\mathrm{cl}$ is an algebraic closure operator on $\hat{M}$.
\end{enumerate}
\end{lem}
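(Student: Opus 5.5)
The plan is to check the two defining conditions of a flat (Definition \ref{definition: flats}), together with nonemptiness, directly for intersections and filtered unions. The closure-operator axioms then follow formally from (1) and (2). A preliminary remark: since $\textbf{0}\leq u$ for every $u\in\hat{M}$, a nonempty downward closed set contains $\textbf{0}$. Also $\hat{M}$ itself is a flat, so the family of flats containing any $S$ is nonempty and $\mathrm{cl}(S)$ is well defined.

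For (1), let $\{F_i\}_{i\in I}$ be flats and put $F=\bigcap_i F_i$, with the convention that the empty intersection is $\hat{M}$. Each $F_i$ contains $\textbf{0}$, so $F$ is nonempty. Downward closure passes to intersections immediately. For the circuit condition, take a circuit set $C$ and $u\in C$ with $C\backslash\{u\}\subseteq F$. Then $C\backslash\{u\}\subseteq F_i$ for every $i$, so $u\in F_i$ for every $i$, and hence $u\in F$.

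For (2), let $\{F_i\}$ be a nonempty family of flats directed under inclusion, and put $F=\bigcup_i F_i$. Nonemptiness and downward closure are clear. For the circuit condition, take $C$ and $u$ as before. The key point is that a circuit set is finite, so $C\backslash\{u\}$ is a finite subset of $F$. Each of its elements lies in some $F_i$, and by directedness a single $F_j$ contains all of them. Then $u\in F_j\subseteq F$. This finiteness step is the only real content of the lemma and is the place to be careful.

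For (3), the properties follow in order. Extensivity and monotonicity are immediate from the definition of $\mathrm{cl}(S)$ as an intersection of flats containing $S$. By (1), $\mathrm{cl}(S)$ is itself a flat containing $S$, indeed the smallest such flat, so $\mathrm{cl}(\mathrm{cl}(S))=\mathrm{cl}(S)$. For the algebraic condition, let $U=\bigcup\mathrm{cl}(A_0)$ over finite $A_0\subseteq A$. Monotonicity gives $U\subseteq\mathrm{cl}(A)$. The family $\{\mathrm{cl}(A_0)\}$ is directed, since $\mathrm{cl}(A_0)\cup\mathrm{cl}(A_1)\subseteq\mathrm{cl}(A_0\cup A_1)$, and it is nonempty because $A_0=\emptyset$ is allowed. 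By (2), $U$ is a flat. It contains every $a\in A$, because $a\in\mathrm{cl}(\{a\})$. Hence $\mathrm{cl}(A)\subseteq U$, and so $\mathrm{cl}(A)=U$.
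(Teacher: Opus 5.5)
Your proposal is correct and follows the same route as the paper: you check intersections directly, handle filtered unions using the finiteness of circuit sets together with directedness, and deduce (3) from (1) and (2). The only difference is that you write out details the paper leaves implicit, namely the nonemptiness checks and the standard closure-operator argument (idempotence from (1), and algebraicity via the directed union of $\mathrm{cl}(A_0)$ over finite $A_0\subseteq A$ combined with (2)).
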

\begin{proof}
    The first claim is trivial.  For the second claim let $F = \bigcup F_i$ be a filtered union of flats indexed by some directed set.  It is clear $F$ is downward closed.  Let $C$ be a circuit set, which is necessarily finite.  Let $v\in C$ and suppose $C\backslash \{v\}\subseteq F$.  By finiteness, there is some $i$ with $C\backslash \{v\}\subseteq F_i$.  Then $v\in F_i\subseteq F$.  The third claim follows from the first two by a standard result on closure operators.
\end{proof}

For an ordinary matroid $M$ on $E$, one can describe closure in terms of circuits as follows\footnote{For instance, see \cite[Proposition 1.4.11(2)]{oxley2006matroid}.
}: for $S \subseteq E$
\[
\text{cl}(S)=S \cup \{v \in E \mid \text{$\exists$ a circuit $C$ such that $v \in C$ and $C\backslash \{v\}\subseteq S$ }  \}.
\]
The following lemma is a valuated analogue of the above result.

\begin{lem}\label{lemma: circuits from valuated flats}
Let $M$ be a valuated matroid.  Let $S\subseteq \hat{M}$ be a subset. Let $\mathcal{C}$ be the set of circuit sets.  Then
\begin{equation}\label{equation: closure operation without downward closure}
    \mathrm{cl}(S) = S_\downarrow \cup \{v\in \hat{M} \mid \exists C\in \mathcal{C} \textrm{ such that } v\in C, C\backslash\{v\}\subseteq S_\downarrow \}
\end{equation}
\end{lem}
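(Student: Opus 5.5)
Call the right-hand side of \eqref{equation: closure operation without downward closure} $T$. The plan is to prove $T\subseteq\mathrm{cl}(S)$ and that $T$ is a flat containing $S$. These two facts together give $\mathrm{cl}(S)=T$, since $\mathrm{cl}(S)$ is the smallest flat containing $S$.

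The inclusion $T\subseteq\mathrm{cl}(S)$ is immediate from the definitions. Any flat $F\supseteq S$ is downward closed, so $S_\downarrow\subseteq F$. If $C$ is a circuit set with $v\in C$ and $C\backslash\{v\}\subseteq S_\downarrow\subseteq F$, then condition (2) of Definition \ref{definition: flats} gives $v\in F$. For the other direction, $T$ is nonempty since $\textbf{0}\in S_\downarrow$ whenever $S$ is nonempty; if $S=\emptyset$ we use the convention that $\textbf{0}$ lies in every flat.

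Next we show $T$ is downward closed. Elements below elements of $S_\downarrow$ are in $S_\downarrow$, and $\textbf{0}\in T$. So take $v=ax\in C$ with $C\backslash\{v\}\subseteq S_\downarrow$, and $a'x$ with $\textbf{0}\neq a'x<ax$. The aim is to produce a circuit set witnessing $a'x\in T$. First choose a circuit $c$ with $c(x)=a$. Then apply (VCE) to $c$ and a suitable scaled copy of $c$ (scaling by (VC3)), exploiting the gap $a'<a$; this eliminates the dependence on the exact value at $x$, provided there is another support element to pivot on. I expect this step to fail in general, however. A more robust route is to show that, for $ax$ in a circuit set whose other elements lie in $S_\downarrow$, the element $a'x$ lies in the closure by a different mechanism. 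So instead I would verify directly that $T\cup\{\text{downward closure of }T\}$ is closed under the circuit condition. In other words, replace $T$ by $T_\downarrow$, show that $T_\downarrow$ is a flat, and note that $T_\downarrow\subseteq\mathrm{cl}(S)$ still holds since flats are downward closed. This gives $\mathrm{cl}(S)=T_\downarrow$. Showing $T_\downarrow=T$ then amounts to the downward-closure claim, which I would get by combining a circuit through $ax$ with the trivial relation at $x$, in the spirit of Remark \ref{remark: idyll}.

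The main step, and the main obstacle, is closure of $T$ under the circuit condition. Suppose $C$ is a circuit set, $u\in C$, and $C\backslash\{u\}\subseteq T$. We must show $u\in T$. Elements of $C\backslash\{u\}$ lying in $S_\downarrow$ are harmless. Each remaining element $w\in C\backslash\{u\}$ is covered by a witnessing circuit set $C_w$ with $w\in C_w$ and $C_w\backslash\{w\}\subseteq S_\downarrow$. Proceed by induction on the number $k$ of elements of $C\backslash\{u\}$ not in $S_\downarrow$.

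If $k=0$, then $u\in T$ directly. Otherwise pick such a $w$; after rescaling, the entries of $C_w$ and $C$ at $w$ agree. Apply Lemma \ref{lemma: circuit elimination} to $C$ and $C_w$, eliminating $w$ and keeping $\eta=u$. This requires $u\notin (C_w)_\downarrow$; if instead $u\in(C_w)_\downarrow$, then $u\in S_\downarrow$ or $u\le w$, and either way we conclude quickly. The result is a circuit set $C'\ni u$ with $C'\subseteq(C\cup C_w\backslash\{w\})_\downarrow$. Its elements other than $u$ lie in $S_\downarrow$ or below the $k-1$ remaining non-$S_\downarrow$ elements of $C$.

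To make the induction close, I would strengthen the hypothesis to "$C\backslash\{u\}\subseteq T_\downarrow$", counting elements not in $S_\downarrow$. The delicate point is ensuring that the count strictly drops, and handling the rescaling when $C_w$ only dominates $w$ rather than equalling it at the coordinate of $w$. For the latter, use (VC3) to shift $C_w$ so that its coordinate at $w$'s support element matches $C$'s; this shifts the other entries down, keeping them in $S_\downarrow$.
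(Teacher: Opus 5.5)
Your main step is the paper's argument. You eliminate a non-$S_\downarrow$ element $w\in C\backslash\{u\}$ against its witnessing circuit set $C_w$ via Lemma~\ref{lemma: circuit elimination}, keep $u$, and induct on the number of elements outside $S_\downarrow$. The paper phrases this as a counterexample minimizing $|C\backslash S_\downarrow|$.

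What needs correcting is your treatment of downward closure. It does not fail, and it needs no detour through $T_\downarrow$. Suppose $ax\in C$ with $C\backslash\{ax\}\subseteq S_\downarrow$ and $\mathbf{0}\neq a'x<ax$. Then $(a'a^{-1})C$ is a circuit set containing $a'x$, and its remaining elements lie below those of $C$, hence in $S_\downarrow$. This is exactly the (VC3) shift in your last paragraph, so that paragraph already proves $T_\downarrow=T$. The appeal to a ``trivial relation at $x$'' in the spirit of Remark~\ref{remark: idyll} should be dropped: that relation is not a circuit set, so it cannot be fed into Lemma~\ref{lemma: circuit elimination}.

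There are also three smaller points.
\begin{enumerate}
\item The strict drop in your count is the paper's injection $j$, and you should spell it out. Every element of $C'\backslash\{u\}$ outside $S_\downarrow$ lies below an element of $C\backslash\{u,w\}$ outside $S_\downarrow$, since anything below $C_w\backslash\{w\}$ is already in $S_\downarrow$. Distinct elements of $C'$ lie over distinct points of $E$, and $C$ has at most one element over each point, so the map is injective.
\item The case $u\le w$ you mention cannot occur, because $u$ and $w$ are distinct elements of the circuit set $C$.
\item Your ``convention'' for $S=\emptyset$ does not rescue that case. There the right-hand side omits $\mathbf{0}$ while $\mathrm{cl}(\emptyset)$ contains it, so the identity needs $\mathbf{0}$ adjoined. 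The paper's proof overlooks the same edge case.
\end{enumerate}
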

\begin{proof}
Let $P$ be the right side of \eqref{equation: closure operation without downward closure}.  We first show that $P$ is a flat.  It is clear (by rescaling circuits) that $\{v\in \hat{M} \mid \exists C\in \mathcal{C} \textrm{ s.t. } v\in C, C\backslash\{v\}\subseteq S_\downarrow \}$ is downward closed and hence so is $P$.  For the sake of contradiction, let $C$ be a circuit set and let $w=kz\in C$ and suppose that $C\backslash \{w\}\subseteq P$ but that $w\not\in P$.  We furthermore assume the counterexample $C$ is chosen to minimize $|C \backslash S_\downarrow|$.

We first consider the case where there exists some $u = ax \in C \backslash S_\downarrow$ with $x\neq z$.  By the choice of $C$, we have $u\in P \backslash S_\downarrow$.  By definition of $P$, there is some circuit set $C'$ with $u\in C'$ and $C'\backslash \{u\}\subseteq S_\downarrow$. 

By Circuit elimination (Lemma \ref{lemma: circuit elimination}), we obtain some circuit set $C'' \subseteq ((C \cup C') \backslash \{u\})_\downarrow$ such that $w\in C''$ (note $w\in C$ but $w\not\in C'_\downarrow\subseteq P_\downarrow = P$).  Since $C\backslash \{w\}, C' \subseteq P$, we have 
\[
C'' \backslash \{w\} \subseteq P_\downarrow = P.
\]
Hence, $C''$ is another counterexample to $F$ being a flat.  In order to contradict minimality, we must show that it has fewer elements not contained in $S_\downarrow$ than $C$ does.  

We claim that 
\begin{equation}
 |C'' \backslash S_\downarrow | \leq |((C \cup C') \backslash \{u\}) \backslash S_\downarrow |.   
\end{equation}
In fact,  define a map
\begin{equation}
j: C'' \backslash S_\downarrow \rightarrow ((C \cup C') \backslash \{u\}) \backslash S_\downarrow 
\end{equation}
by choosing $j(by)$ to be some element $b'y\in ((C \cup C') \backslash \{u\}) \backslash S_\downarrow$ such that $by \leq b'y$.  To see this is well-defined, suppose $b'y, b''y\in C\cup C' \backslash\{u\}$ are such that $by \leq b'y, b''y$.  Because a circuit set can not contain two multiples of $y$ we must have $b'y\in C$ and $b''y\in C'$ (or vice versa, in which case we swap $b',b''$).  If $b''y\in C'\backslash \{u\} \subseteq S_\downarrow$, then $by\in S_\downarrow$, contradicting that $by$ is in the domain of $j$.  So $ b''y = u \in C$ and hence $b''y = b'y$.  This establishes $j$ is well-defined.  To establish the claim, we must show $j$ is injective.  Suppose $j(by) = j(b'y')$ for some $by, b'y'\in C''\backslash S_\downarrow$.  Then there exists some $b''y'' = j(by)$ such that $by, b'y' \leq b''y''$ and hence $y = y' = y''$.  But then $by = b'y$ since $C''$ can contain at most one multiple of $y$.  This proves the claim.

Observe that
\begin{equation}
    |C'' \backslash S_\downarrow | \leq |((C \cup C') \backslash \{u\}) \backslash S_\downarrow | \leq |(C \backslash \{u\}) \backslash S_{\downarrow} | + |(C' \backslash \{u\}) \backslash S_\downarrow |.
\end{equation}
Since $C'\backslash\{u\}\subseteq S_\downarrow$, the last term on the right side is zero.  Moreover, $u\not\in S_\downarrow$, so
\begin{equation}
    |C'' \backslash S_\downarrow | \leq |(C \backslash \{u\}) \backslash S_{\downarrow} | = |C \backslash S_{\downarrow} | - 1.
\end{equation}
Thus $|C \backslash S_\downarrow|$ is not minimized, and so by contradiction $P$ is a flat.

In the remaining case any $u\in C\backslash S_\downarrow$ has the form $az$ for some $a\in\mathbb{R}_{\max}$.  A circuit set can only contain one multiple of $z$, and $C$ contains $w=kz$.  Thus $C\backslash S_\downarrow\subseteq \{w\}$.  Since $w\in C$, this implies $w\in P$, which is a contradiction. Hence $P$ is a flat.

Clearly $S\subseteq P$, so it remains to show that if $F'$ is a flat with $S\subseteq F'$ then $P\subseteq F'$.  Let $v\in P$.  If $v\in S_\downarrow$, then $v\in F'$ since $F'$ is downward closed.  Otherwise, there is a circuit set $C$ with $v\in C$ and $C\backslash \{v\}\subseteq S_\downarrow\subseteq F'$.  The latter condition plus the definition of a flat imply $v\in F'$ as desired.  
\end{proof}

The following result lists a few properties of the closure operator on a valuated matroid.

\begin{pro}\label{proposition: valuated closure axioms}
Let $M$ be a valuated matroid on a finite set $E$. Then, $\mathrm{cl}$ is an algebraic closure operator on $\hat{M}$ satisfying the following.
\begin{enumerate}
    \item 
    $\textbf{0}\in\mathrm{cl}(\emptyset)$.
    \item 
    For any $S\subseteq \hat{M}$ and $a\in \mathbb{R}_{\max}^\times$, $\mathrm{cl}(aS) = a \mathrm{cl}(S)$. 
    \item 
    For any $S\subseteq \hat{M}$, $\mathrm{cl}(S)$ is downward closed.
    \item 
    Let $S\subseteq\hat{M}$ and let $x,y\in E$ and $a\in \mathbb{R}_{\max}$.  Suppose there is some $b\in \mathbb{R}_{\max}$ such that $ax\in \mathrm{cl}(S\cup\{by\})$.  Then there is some $c\leq b$ such that $ax\in \mathrm{cl}(S\cup\{cy\})$ and $cy\in \mathrm{cl}(S \cup\{ax\})$.
    \item 
    Let $S\subseteq\hat{M}$ and let $x\in E$ and $a, b\in \mathbb{R}_{\max}$.  Suppose $ax\in \mathrm{cl}(S \cup \{bx\})$ and that $b < a$.  Then $ax\in \mathrm{cl}(S)$.

\end{enumerate}
\end{pro}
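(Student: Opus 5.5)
Algebraicity of $\mathrm{cl}$ was established in the preceding lemma, so the plan is to check (1)--(5) using the explicit description of $\mathrm{cl}(S)$ in Lemma \ref{lemma: circuits from valuated flats}. Throughout we use that circuit sets are stable under scaling: if $C$ is a circuit set and $a\in\mathbb{R}_{\max}^\times$ then $aC$ is a circuit set (axiom (VC3)).

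\emph{Items (1)--(3).} For (1), every flat is nonempty and downward closed, and $\textbf{0}$ lies below every element of $\hat{M}$, so $\textbf{0}$ lies in every flat. For (3), $\mathrm{cl}(S)$ is itself a flat (intersection of flats), hence downward closed. For (2), note that $F\mapsto aF$ is a bijection on flats: downward closedness is preserved because multiplication by $a$ is an order automorphism of $\hat{M}$, and the circuit condition is preserved by the scaling invariance above. Hence the flats containing $aS$ are exactly the sets $aF$ with $F\supseteq S$ a flat, and intersecting gives $\mathrm{cl}(aS)=a\,\mathrm{cl}(S)$.

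\emph{Item (5).} Suppose $ax\in\mathrm{cl}(S\cup\{bx\})$ with $b<a$. By Lemma \ref{lemma: circuits from valuated flats}, either $ax\in (S\cup\{bx\})_\downarrow$, or there is a circuit set $C\ni ax$ with $C\setminus\{ax\}\subseteq (S\cup\{bx\})_\downarrow$. In the first case $ax\not\le bx$ forces $ax\in S_\downarrow$. In the second case, $C$ contains only one multiple of $x$, namely $ax$, so no element of $C\setminus\{ax\}$ lies below $bx$ except possibly $\textbf{0}$, which is not in a circuit set. Thus $C\setminus\{ax\}\subseteq S_\downarrow$, and so $ax\in\mathrm{cl}(S)$.

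\emph{Item (4), the main obstacle.} Assume $ax\in\mathrm{cl}(S\cup\{by\})$. We dispose of easy cases first.
\begin{itemize}
\item If $ax\in\mathrm{cl}(S)$, take $c=\textbf{0}$: then $ax\in\mathrm{cl}(S\cup\{\textbf{0}\})$ and $\textbf{0}\in\mathrm{cl}(S\cup\{ax\})$ by (1).
\item If $x=y$ and $a\le b$, take $c=a$.
\item If $x=y$ and $a>b$, then (5) puts us in the first case.
\end{itemize}
So assume $ax\notin\mathrm{cl}(S)$ and $x\ne y$. By Lemma \ref{lemma: circuits from valuated flats} there is a circuit set $C\ni ax$ with $C\setminus\{ax\}\subseteq (S\cup\{by\})_\downarrow$. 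The hard part is ensuring that $C$ contains a multiple of $y$, rather than relying on $S_\downarrow$ alone. If $C\setminus\{ax\}\subseteq S_\downarrow$, then $ax\in\mathrm{cl}(S)$, a contradiction. Hence some element of $C$ lies below $by$ but not in $S_\downarrow$, and since $C$ contains a single multiple of $y$, this element is $cy$ for some $c\le b$. Now $C$ witnesses both conclusions at once, because $C\setminus\{ax\}\subseteq (S\cup\{cy\})_\downarrow$ and $C\setminus\{cy\}\subseteq (S\cup\{ax\})_\downarrow$. Here we use that the remaining elements of $C$ lie in $S_\downarrow$ or below $by$; the only one below $by$ is $cy$ itself. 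Applying Lemma \ref{lemma: circuits from valuated flats} then gives $ax\in\mathrm{cl}(S\cup\{cy\})$ and $cy\in\mathrm{cl}(S\cup\{ax\})$. The remaining care is in the bookkeeping of which elements of $C$ sit below $by$ versus in $S_\downarrow$.
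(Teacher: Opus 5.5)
Your proof is correct and follows essentially the same route as the paper: (1)--(3) come directly from the definition of flats and their invariance under scaling, and (4) and (5) come from the circuit description of $\mathrm{cl}$ in Lemma~\ref{lemma: circuits from valuated flats}, with a single circuit set $C\ni ax$ containing some $cy$, $c\le b$, witnessing both conclusions of (4). The differences are cosmetic. You dispatch the case $x=y$, $a>b$ of (4) via (5), whereas the paper lets it fall under the subcase $C\setminus\{ax\}\subseteq S_\downarrow$. Before invoking the circuit alternative of the lemma, you should also state explicitly that $ax\notin(S\cup\{by\})_\downarrow$; this follows from $ax\notin\mathrm{cl}(S)$ together with $x\neq y$.
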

\begin{proof}
(1): This is clear since flats are nonempty and downward closed.

(2): We first show that $a\mathrm{cl}(S)$ is a flat. Suppose $C\backslash \{x\} \subseteq a\mathrm{cl}(S)$ for some circuit set $C$ and some $x\in C$.  Then $a^{-1}C \backslash \{a^{-1}x\} \subseteq \mathrm{cl}(S)$.  Since $a^{-1}C$ is a circuit set, $a^{-1}x\in \mathrm{cl}(S)$ so $x\in a\mathrm{cl}(S)$.  Similarly $a\mathrm{cl}(S)$ is downward closed.  This shows $a\mathrm{cl}(S)$ is a flat; since it contains $aS$, $\mathrm{cl}(aS)\subseteq a\mathrm{cl}(S)$.  Applying this with $a^{-1}$ and $aS$ in place of $a$ and $S$ yields $\mathrm{cl}(S)\subseteq a^{-1}\mathrm{cl}(aS)$.  Hence $\mathrm{cl}(aS) = a\mathrm{cl}(S)$, establishing the second claim.

(3): The third claim follows immediately from the definition of a flat.  

(4): Let $S, a, b, x, y$ be as in the fourth claim. We consider the following two cases. 

\underline{Case 1:} Suppose that $ax \in (S\cup \{by\})_\downarrow$. Then $ax \in S_\downarrow$ or $ax \in (by)_\downarrow$. If $ax \in S_\downarrow$, then $ax\in \mathrm{cl}(S)$ and we may take $c=\textbf{0}$. If $ax \in (by)_\downarrow$, then $x=y$ and $a\leq b$. In this case, we can take $c=a$.

\underline{Case 2:} Suppose that $ax\not\in (S \cup \{by\})_\downarrow$. Let 
\begin{equation}
    P = 
    \{v\in \hat{M} \mid \exists C\in \mathcal{C} \textrm{ such that } v\in C, C\backslash\{v\}\subseteq (S\cup\{by\})_\downarrow \},
\end{equation}
where $\mathcal{C}$ is the set of circuit sets.
By Lemma \ref{lemma: circuits from valuated flats}, $ax\in P$.  Therefore, there is a circuit set $C$ with $ax\in C$ and $C\backslash \{ax\} \subseteq (S \cup \{by\})_\downarrow$.  If $C\backslash\{ax\}\subseteq S_\downarrow$ then $ax\in \mathrm{cl}(S)$, and we just set $c=\textbf{0}$.  Hence, we may assume that there exists $c\leq b$ such that $cy\in C$.  Since $cy$ must be the only multiple of $y$ in $C$,
\begin{equation}
    C\backslash \{ax\} \subseteq S_\downarrow\cup \{cy\} \subseteq \mathrm{cl}(S \cup \{cy\}).
\end{equation}
Thus also $ax\in \mathrm{cl}(S \cup \{cy\})$.  Next observe that $C \backslash \{ax, cy\} \subseteq S_\downarrow$ and so 
\begin{equation}
    C \backslash \{cy\} \subseteq S_\downarrow \cup \{ax\} \subseteq \mathrm{cl}(S \cup \{ax\}).
\end{equation}
Hence $cy\in \mathrm{cl}(S \cup \{ax\})$.

(5): Now let $S, a, b, x$ be as in the fifth claim.  Let 
\begin{equation}
    Q = (S\cup\{bx\})_\downarrow \cup \{v\in \hat{M} \mid \exists C\in \mathcal{C} \textrm{ such that } v\in C, C\backslash\{v\}\subseteq (S\cup\{bx\})_\downarrow \}.
\end{equation}
Then $ax\in Q$ by Lemma \ref{lemma: circuits from valuated flats}.  By assumption, $ax\not\in \{bx\}_\downarrow$.  If $ax\in S_\downarrow$ then $ax\in\mathrm{cl}(S)$.  Thus we may assume there is a circuit set $C$ such that $ax\in C$ and $C\backslash\{ax\}\subseteq (S\cup\{bx\})_\downarrow$.  Since a circuit set cannot contain two multiples of $x$, $C\backslash\{ax\}\subseteq S_\downarrow\subseteq\mathrm{cl}(S)$.  This implies $ax\in \mathrm{cl}(S)$.
\end{proof}

The following lemma describes circuit sets in terms of the closure operator of a valuated matroid.

\begin{lem}\label{lemma: closure operator determines valuated circuits}
Let $M$ be a valuated matroid on a finite set $E$.  A nonempty finite subset $C\subseteq\hat{M}$ is a circuit set if and only if it satisfies all of the following.
\begin{enumerate}
    \item 
$v\in \mathrm{cl}(C\backslash\{v\})$ for any $v\in C$.
    \item
$\textbf{0}\not\in C$ and if $ax, bx\in C$ then $a = b$.
    \item 
Let $D\subseteq C_\downarrow$ satisfy (1) and (2) and suppose $C \cap D\neq \emptyset$.  Then $D = C$.
\end{enumerate}
\end{lem}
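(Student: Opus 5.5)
For the forward direction, I will take a circuit set $C$ with circuit $c$ and check (1)--(3) in turn. Condition (2) is immediate from the definition: $\textbf{0}\notin C$ because $C$ consists of the values $c(x)x$ with $c(x)\neq\textbf{0}$, and $C$ contains at most one multiple of each $x$. For (1), fix $v\in C$. Since $C$ is a circuit set with $v\in C$ and $C\backslash\{v\}\subseteq (C\backslash\{v\})_\downarrow$, Lemma \ref{lemma: circuits from valuated flats} gives $v\in\mathrm{cl}(C\backslash\{v\})$.

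For (3), let $D\subseteq C_\downarrow$ satisfy (1) and (2), and suppose $w\in C\cap D$. The key sub-claim is that any set $D$ satisfying (1) and (2) contains a circuit set $C'\subseteq D_\downarrow$ with $w\in C'$.

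To prove the sub-claim, apply (1) to $D$ at $w$. By Lemma \ref{lemma: circuits from valuated flats}, either $w\in (D\backslash\{w\})_\downarrow$, or some circuit set $C'$ satisfies $w\in C'$ and $C'\backslash\{w\}\subseteq (D\backslash\{w\})_\downarrow$. The first alternative is impossible: it would force $D$ to contain two distinct multiples of the same element, or $w=\textbf{0}$, and (2) rules out both. So the second alternative holds. Then $C'\subseteq D_\downarrow\subseteq C_\downarrow$, which means $\mathrm{supp}(C')\subseteq\mathrm{supp}(C)$ and $C'$ lies below $C$ pointwise.

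Next I compare $C'$ with $C$. Both correspond to circuits of the underlying matroid, and their supports are nested, so the supports are equal. Also $c'\le c$ pointwise with equality at $w$. Now suppose $C'\neq C$. Pick $y$ with $c'(y)<c(y)$, and apply the circuit elimination (VCE) to $c$ and $c'$ at $u=\mathrm{supp}(w)$ and $v=y$. This produces a circuit $z$ with $z_u=\textbf{0}$ and $\underline{z}\subseteq\mathrm{supp}(C)\backslash\{u\}$, which is strictly smaller than a circuit support and contradicts (VC2). Hence $C'=C$.

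It remains to show $D=C$. From $C=C'\subseteq D_\downarrow$ together with $D\subseteq C_\downarrow$ and (2), every element of $C$ is $\le$ some element of $D$, and every element of $D$ is $\le$ some element of $C$. Since both sets contain at most one multiple of each element of $E$ and both omit $\textbf{0}$, these two comparisons force $D=C$.

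For the converse, let $C$ satisfy (1)--(3). Pick $w\in C$ and apply the sub-claim to $C$ itself: this gives a circuit set $C'\subseteq C_\downarrow$ containing $w$. By the forward direction, $C'$ satisfies (1) and (2). Condition (3) applied to $D=C'$, with $C\cap C'\ni w$, then yields $C'=C$, so $C$ is a circuit set.

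The step I expect to be the main obstacle is the comparison showing that a circuit lying pointwise below another circuit and agreeing with it at one point must be equal to it. The (VCE) argument sketched above should handle this; the care needed is in the support bookkeeping with $\textbf{0}$.
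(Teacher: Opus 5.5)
Your proof is correct and follows essentially the same route as the paper. Like the paper, you obtain a circuit set $C'\ni w$ with $C'\subseteq D_\downarrow\subseteq C_\downarrow$ from Lemma \ref{lemma: circuits from valuated flats}, show $C'=C$ by elimination, and then use condition (2) to force $D=C$, with the same converse; the only cosmetic difference is that you obtain $C'=C$ by applying (VCE) directly to the pointwise-comparable circuits $c'\le c$, whereas the paper invokes Lemma \ref{lemma: circuit elimination} and then projects to the underlying matroid.
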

\begin{proof}
It is clear that circuit sets satisfy Conditions (1)  and (2).  To check they satisfy (3), let $C$ be a circuit set, let $D\subseteq C_\downarrow$ satisfy Conditions (1) and (2) and let $v\in C \cap D$. Then, we have $v \in \text{cl}(D\backslash \{v\})$ by Condition $(1)$. Since $v\not\in (D \backslash \{v\})_\downarrow$, by Lemma \ref{lemma: circuits from valuated flats} there is a circuit set $C'$ with $v\in C'$, and 
\begin{equation}\label{eq: inclusion}
C' \backslash \{v\} \subseteq (D \backslash \{v\})_\downarrow\subseteq (C\backslash\{v\})_\downarrow.
\end{equation}
Here we have used that since Condition (2) applies to $D$, $(D\backslash\{v\})_\downarrow \cap \{v\}_\downarrow = \emptyset$. By Lemma \ref{lemma: circuit elimination}, if $C\neq C'$ then there is a circuit set $C'' \subseteq ((C \cup C') \backslash \{v\})_\downarrow$.  In particular, by \eqref{eq: inclusion}, we have
\begin{equation}
C'' \subseteq (C\backslash\{v\})_\downarrow.
\end{equation}
Let $\pi: \hat{M}\backslash\{\textbf{0}\} = \mathbb{R}_{\max}^\times\times E\rightarrow E$ be the projection.  Then, we have
\begin{equation}
\pi(C'') \subseteq \pi(C) \backslash \{\pi(v)\} \subsetneq \pi(C).
\end{equation}
But $\pi(C'')$ and $\pi(C)$ are circuits of the underlying matroid, so this is a contradiction, so $C' = C$.  Consequently, \eqref{eq: inclusion} becomes the following:
\begin{equation}
C \backslash \{v\} \subseteq (D \backslash\{v\})_\downarrow \subseteq (C\backslash\{v\})_\downarrow.    
\end{equation}
For any $w\in C\backslash\{v\}$, there exist $w'\in D\backslash\{v\}$ and $w''\in C\backslash\{v\}$ such that $w\leq w' \leq w''$.  By Condition (2), we have equality throughout, and hence $C\backslash\{v\}\subseteq D\backslash\{v\}$.  A similar argument using that Condition (2) applies to $D$ gives $D\backslash\{v\}\subseteq C\backslash\{v\}$.  Thus $C \backslash\{v\} = D\backslash\{v\}$. and hence $C = D$.

Conversely, let $C$ be a nonempty subset satisfying Conditions (1), (2), and (3).  Let $v\in C$ and observe $v\in \mathrm{cl}(C \backslash \{v\})$.
By Lemma \ref{lemma: circuits from valuated flats}, either there is a circuit set $C'$ with $v\in C'$ and $C'\backslash\{v\} \subseteq (C\backslash\{v\})_\downarrow$ or we have $v\in (C \backslash \{v\})_\downarrow$. The second case is not possible by Condition (2). In the first case, $C'$ satisfies Conditions (1)  and (2) because it is a circuit set, and clearly $C'\subseteq C_\downarrow$ and $v\in C \cap C'\neq \emptyset$.  So by Condition (3), $C = C'$ and hence $C$ is a circuit set.
\end{proof}

To understand the intuition behind the following lemma, it is helpful to consider instead the case of a classical matroid (i.e. replace $\mathbb{R}$ with the trivial group), and to assume $\mathrm{cl}$ is the closure operator of a matroid.  In combination with the description of circuits from Lemma \ref{lemma: closure operator determines valuated circuits}, the below lemma is a valuated analogue of the fact that a dependent set (defined in terms of the closure operator) contains a circuit. To this end, we introduce the following definition. 

\begin{mydef}\label{definition: vector set}
Let $E$ be a finite set.  Let $\mathrm{cl}$ be an algebraic closure operator on $\hat{M}$ satisfying the conclusions of Proposition \ref{proposition: valuated closure axioms}.  We call a finite subset $V\subseteq \hat{M}$ a \emph{vector set} if it satisfies the following properties:
\begin{enumerate}
    \item 
for any $v\in V$,  $v\in \mathrm{cl}(V\backslash\{v\})$, 
\item 
$\textbf{0}\not\in V$, and 
\item 
if $ax,bx\in V$ for $x\in E$, then $a = b$. 
\end{enumerate}
\end{mydef}

\begin{lem}\label{lemma: bend relations from element of closure of valuated matroid}
Let $E$ be a finite set.  Let $\mathrm{cl}$ be an algebraic closure operator on $\hat{M}$ satisfying the conclusions of Proposition \ref{proposition: valuated closure axioms}.  Let $S\subseteq \hat{M}$ be finite and let $v = ax\in S$ be such that $v\in \mathrm{cl}(S\backslash v)$ and such that $bx\not\in S$ for all $b > a$.  Then there is a vector set $T\subseteq S_\downarrow$ with $v\in T$. 
\end{lem}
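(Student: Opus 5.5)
We argue by strong induction on $|S|$, or more precisely on the number of elements of $S$ other than $\textbf{0}$ and other than non-maximal multiples. The first step is to normalize $S$ so that it already satisfies conditions (2) and (3) of Definition \ref{definition: vector set}. Removing $\textbf{0}$ does not change $\mathrm{cl}(S\backslash v)$, since $\textbf{0}\in\mathrm{cl}(\emptyset)$. If $S$ contains two multiples $by, b'y$ of the same $y$ with $b<b'$, we discard $by$. By downward closedness of closures (property (3)), $\mathrm{cl}$ of the smaller set still contains $by$, so $\mathrm{cl}(S\backslash v)$ is unchanged. The hypothesis that $bx\notin S$ for $b>a$ guarantees that $v$ itself is never discarded, since $v$ is the maximal multiple of $x$ in $S$. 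Property (5) handles any lower multiples $bx$ with $b<a$: we get $v\in \mathrm{cl}(S\backslash\{v,bx\})$, so those can be dropped as well. Afterwards $S'\subseteq S$ has $\textbf{0}\notin S'$, at most one multiple of each element, $v\in S'$, and $v\in\mathrm{cl}(S'\backslash v)$.

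Next we induct on $|S'|$. If every $w\in S'$ satisfies $w\in \mathrm{cl}(S'\backslash w)$, then $T=S'$ is a vector set and we are done. Otherwise pick $w=by\in S'$ with $w\notin\mathrm{cl}(S'\backslash w)$; here $y\neq x$ because $w\ne v$. We want to show $v\in\mathrm{cl}(S''\cup\{cy\})$ for some smaller configuration, or that $v\in \mathrm{cl}(S'\backslash\{v,w\})$. In the latter case we recurse on $S'\backslash w$.

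To do this, apply property (4) (exchange) with $S_0=S'\backslash\{v,w\}$, the target $ax=v$, and the added element $by=w$. Since $v\in \mathrm{cl}(S_0\cup\{by\})$, we obtain $c\le b$ with $v\in \mathrm{cl}(S_0\cup\{cy\})$ and $cy\in\mathrm{cl}(S_0\cup\{v\})$. There are two cases.
\begin{itemize}
\item If $c<b$, then $by\notin\mathrm{cl}(S'\backslash w)$ but the set $S_0\cup\{cy\}$, which lies in $S'_\downarrow$, still has $v$ in its closure. We then want to iterate, lowering $c$.
\item If $c=b$, then $by\in \mathrm{cl}(S_0\cup\{v\})=\mathrm{cl}(S'\backslash w)$, which contradicts the choice of $w$.
\end{itemize}
So only $c<b$ can occur. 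Now $S_0\cup\{cy\}$ has the same size as $S'$, so plain induction on cardinality fails here. The remedy is to induct on a finer measure: keep $S\subseteq S'_\downarrow$ and use the lexicographic data (size, then the multiset of coefficients). Since the group is $\mathbb{R}$ rather than discrete, a decreasing-coefficient argument need not terminate. The step I therefore expect to be the main obstacle is avoiding an infinite descent.

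The plan to get around it is to choose, at each fixed support, an extremal configuration. Fix a support $A\subseteq E$ with $x\in A$, and consider all $T\subseteq S'_\downarrow$ with support $A$, one element per $y\in A$, coefficient of $x$ equal to $a$, and $v\in\mathrm{cl}(T\backslash v)$. We take $A$ minimal among supports admitting such a $T$. We then aim to show that any such $T$ is a vector set. Suppose instead that some $w=by\in T$ has $w\notin\mathrm{cl}(T\backslash w)$. The exchange above produces $c<b$; if $c=\textbf{0}$, this contradicts minimality of $A$. If $c\neq\textbf{0}$, the second output of (4), namely $cy\in\mathrm{cl}(T\backslash w \cup\{v\})$, together with $v\in\mathrm{cl}(T\backslash\{v,w\}\cup\{cy\})$, should make $cy$ dependent in the new set $T'$. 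The remaining difficulty is to show that the other elements $u$ of $T'$ stay in $\mathrm{cl}(T'\backslash u)$. For this I would instead choose $T$ to minimize the number of non-dependent elements at minimal support, and apply (4) once more to each such $u$, using that the property "$z\in\mathrm{cl}(T\backslash z)$" is preserved when the coefficient of $y$ is lowered within the closure. This part will require careful bookkeeping with (4) and (5), and it is where I expect the real work to lie.
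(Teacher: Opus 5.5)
Your normalization (discarding $\mathbf{0}$, dropping non-maximal multiples of $y\neq x$ by downward closure, and dropping lower multiples of $x$ by axiom (5)) is exactly how the paper begins. Your eventual potential function, the number of elements $u\in T$ with $u\notin\mathrm{cl}(T\setminus u)$, is the complement of the quantity $k$ the paper maximizes. It is an integer bounded by $|S|$, so once you use it the infinite-descent worry disappears.

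\textbf{The gap.} The proposal stops at the step that constitutes the proof, and the principle you plan to use there is not valid. Lowering the coefficient of $y$ from $b$ to $c$ can only shrink closures. So a dependent $u\neq v$ that needed $by$ may no longer lie in $\mathrm{cl}(T'\setminus u)$. The $c$ you obtain from axiom (4) with target $v$ is tailored to $v$ alone; it can be smaller than what $u$ needs, even $\mathbf{0}$. Applying (4) only with target $v$ therefore gives no control over the other elements. The minimal-support restriction does not rescue this from the axioms as you use them.

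\textbf{The missing idea.} Apply axiom (4) to \emph{every} currently dependent element, not just $v$, and take the largest output.
Let $w=by\in T$ be non-dependent.
For each dependent $u\in T$, apply (4) to $u\in\mathrm{cl}((T\setminus\{u,w\})\cup\{by\})$. This gives $c_u\le b$ with $c_uy\in\mathrm{cl}(T\setminus w)$ and $u\in\mathrm{cl}((T\setminus\{u,w\})\cup\{c_uy\})$.
Put $c=\max_u c_u$ and $T'=(T\setminus w)\cup\{cy\}\subseteq S_\downarrow$.
Then $cy\in\mathrm{cl}(T\setminus w)=\mathrm{cl}(T'\setminus cy)$, using the maximizing $u$.
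Each previously dependent $u$ stays dependent, because $c_uy\le cy$ and closures are downward closed (axiom (3)) and monotone.
Hence the number of non-dependent elements strictly drops. If $c=\mathbf{0}$, simply delete $w$; the same argument applies.
The element $v$ is dependent from the start and so is never the one replaced. A configuration that is extremal for this count is therefore a vector set containing $v$. This is the paper's argument, and it makes your minimal-support layer unnecessary.
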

\begin{proof}
Fix $v=ax$ and suppose $S$ is a counterexample of minimal size.  We first claim that if $by, cy\in S$ for some $b\neq c$, then $x = y$. In fact, suppose $by, cy\in S$ for some $y\in E\backslash\{x\}$ with $b\neq c$. We may assume that $b\leq c$, and hence 
\[
v\in \mathrm{cl}(S\backslash \{v\}) = \mathrm{cl}(S \backslash \{v, by\}).
\]
Since $S' := S \backslash \{by\}$ is smaller than $S$, we have a vector set $T \subseteq S'_\downarrow$ with $v \in T$. But, then $T \subseteq S_\downarrow$ as well, giving us contradiction. 

Next, we claim that $bx \not \in S$ for $b \neq a$. In fact, suppose $bx\in S$ for some $b\neq a$.  By hypothesis on $S$, $b\not > a$, so $b < a$.  Now, 
\[
ax\in \mathrm{cl}((S\backslash\{ax,bx\}) \cup \{bx\}).
\]
Thus by the fifth axiom of Proposition \ref{proposition: valuated closure axioms}, we have
\[
ax\in\mathrm{cl}((S\backslash\{bx\})\backslash\{ax\}),
\]
and hence $S\backslash\{bx\}$ satisfies the hypotheses on $S$. By minimality of the counterexample $S$, we obtain a vector set $T\subseteq (S\backslash\{bx\})_\downarrow\subseteq S_\downarrow$ with $v \in T$. This shows contradiction.

Let $m = |S|$.  We will work with a subset $T = \{v_1, \ldots, v_m\}\subseteq S_{\downarrow}$ such that $v\in T$ and no two elements $v_i$ and $v_j$ of $T$ are of the form $v_i=bz$ and $v_j=cz$.
We will also work with some choice of ordering of the elements of $T$ so that $v_1 = v$.

We will define $k$ to be the largest number such that $v_i \in \mathrm{cl}(v_1, \ldots, v_{i-1}, v_{i + 1}, \ldots, v_m)$ for $i = 1, \ldots, k$.  Note that if we chose $T = S$ we would have $k\geq 1$, but our goal is to construct such a $T$ with $k=|T|$. We may choose $T$ (and the ordering of the elements of $T$ other than $v_1 = v$) so that $k$ is maximal (in particular $k\geq 1$) and we assume for the sake of contradiction that $k \neq |T|$.

For any $i\leq k$, by the fourth axiom of Proposition \ref{proposition: valuated closure axioms}, there is some $c_i \leq 1$ such that 
\begin{equation}\label{equation: inclusion of c_iv_k}
    c_i v_{k+1} \in \mathrm{cl}(v_1, \ldots, v_k, v_{k + 2}, \ldots, v_m)
\end{equation}
and
\begin{equation}\label{equation: inclusion of v_i into a closure involving c_iv_k}
    v_i \in \mathrm{cl}(v_1, \ldots, v_{i - 1}, v_{i + 1}, \ldots, c_i v_{k + 1}, \ldots, v_m).
\end{equation}

Let $i_0$ be the index such that $c_{i_0}=\max \{c_i\}$. 
Let $c = c_{i_0}$.  Then by \eqref{equation: inclusion of c_iv_k} applied to $i_0$, we have 
\begin{equation}
    c v_{k+1} \in \mathrm{cl}(v_1, \ldots, v_k, v_{k + 2}, \ldots, v_m).
\end{equation}
Furthermore, for $i\leq k$, we have $c_i\leq c$, and hence $c_iv_{k+1}\in \mathrm{cl}(cv_{k+1})$. Therefore, we have
\begin{equation}
   v_{i} \in \mathrm{cl}(v_1, \ldots, v_{i - 1}, v_{i + 1}, \ldots, c v_{k + 1}, \ldots, v_m).
\end{equation}
Let $T':=\{v_1, \ldots, v_k, cv_{k+1}, v_{k+2}, \ldots, v_m\}\subseteq S_{\downarrow}$.  If $v_{k + 1}$ is a multiple of $y\in E$, then no other element of $T$ is a multiple of $y$, so $cv_{k + 1}\not\in\{v_1, \ldots, v_{k}, v_{k + 2},\ldots, v_m\}$.  Thus the elements of $T'$ are distinct and $|T'| = m$. Clearly $T'$ satisfies the assumptions on $T$ but with $k + 1$ in place of $k$.  This contradicts maximality.
This yields the desired result except that we may have $\textbf{0}\in T$.  In this case, $T\backslash\{\textbf{0}\}$ has all of the desired properties.
\end{proof}

Our next goal is to show the closure axioms of Proposition \ref{proposition: valuated closure axioms} characterize valuated matroids.  The main step in this direction is to show they imply a circuit elimination property.  One step towards this is the following vector elimination property.

\begin{lem}\label{lemma: vector elimination property}
Let $E$ be a finite set.  Let $\mathrm{cl}$ be an algebraic closure operator on $\hat{M}$ satisfying the conclusions of Proposition \ref{proposition: valuated closure axioms}. Let $V_1, V_2 \subseteq \hat{M}$ be vector sets. Let $e\in V_1\cap V_2$, and let $u$ be a maximal element (with respect to the partial order on $\hat{M}$) of $V_1 \Delta V_2$.  Then there is a vector set $V\subseteq (V_1 \cup V_2 \backslash\{e\})_\downarrow$ containing $u$.
\end{lem}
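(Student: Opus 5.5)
The plan is to build a finite set $S\subseteq (V_1\cup V_2\backslash\{e\})_\downarrow$ that contains $u$, satisfies $u\in\mathrm{cl}(S\backslash\{u\})$, and contains no strictly larger multiple of the base element of $u$. Lemma \ref{lemma: bend relations from element of closure of valuated matroid} then gives a vector set $T\subseteq S_\downarrow\subseteq (V_1\cup V_2\backslash\{e\})_\downarrow$ with $u\in T$, which is exactly what we want. By symmetry we may assume $u\in V_1\backslash V_2$. Write $u=ax$ and $e=kz$; note $x\neq z$, since a vector set contains at most one multiple of each element.

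\emph{Step 1: use $V_2$ to put $e$ in a closure.} Since $V_2$ is a vector set, $e\in\mathrm{cl}(V_2\backslash\{e\})$. Using $V_1$ in the same way, $u\in\mathrm{cl}(V_1\backslash\{u\})$. Because $e\in V_1\backslash\{u\}$ and $e\in \mathrm{cl}(V_2\backslash\{e\})$, monotonicity and idempotence of $\mathrm{cl}$ give
\[
u\in \mathrm{cl}\big((V_1\backslash\{u,e\})\cup (V_2\backslash\{e\})\big).
\]
Set $S=\big((V_1\cup V_2)\backslash\{e\}\big)\cup\{u\}$. This $S$ lies in $(V_1\cup V_2\backslash\{e\})_\downarrow$, and $S\backslash\{u\}$ contains the set displayed above, so $u\in\mathrm{cl}(S\backslash\{u\})$.

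\emph{Step 2: check the maximality hypothesis.} We need $bx\notin S$ for every $b>a$.
\begin{itemize}
\item $V_1$ contains no other multiple of $x$.
\item Suppose $bx\in V_2$ with $b>a$. Then $bx\notin V_1$, because the only multiple of $x$ in $V_1$ is $ax$. Hence $bx\in V_1\Delta V_2$ and $bx>u$, contradicting the maximality of $u$ in $V_1\Delta V_2$.
\end{itemize}
This is the one place where maximality of $u$ is used. It is also the main subtle point: if $V_2$ contains some $bx$, then $u\in V_1\backslash V_2$ forces $b\neq a$, so $b<a$, which is allowed. The case $b=a$ would mean $u\in V_2$, which is excluded.

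\emph{Step 3: conclude.} Apply Lemma \ref{lemma: bend relations from element of closure of valuated matroid} to $S$ and $v=u$. It yields a vector set $T\subseteq S_\downarrow$ with $u\in T$. Since $S\subseteq V_1\cup V_2\backslash\{e\}$, we get $T\subseteq (V_1\cup V_2\backslash\{e\})_\downarrow$.

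I expect the only obstacle to be the bookkeeping in Steps 1 and 2: $S$ must avoid $e$ while still placing $u$ in the closure of the rest, and it must contain no larger multiple of $x$. The preceding lemma then carries the real weight.
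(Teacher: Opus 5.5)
Your proof is correct and is essentially the paper's argument. You use $e\in\mathrm{cl}(V_2\backslash\{e\})$ to place $u$ in $\mathrm{cl}((V_1\cup V_2\backslash\{e\})\backslash\{u\})$, and you use maximality of $u$ in $V_1\Delta V_2$ to rule out a larger multiple of $x$ in $S$; the paper phrases this second step as ``$u$ is maximal in $V_1\cup V_2\backslash\{e\}$''. You then apply Lemma \ref{lemma: bend relations from element of closure of valuated matroid} to $S=(V_1\cup V_2)\backslash\{e\}$, exactly as the paper does.
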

\begin{proof}
Without loss of generality, $u\in V_1\backslash V_2$.  If $u< w$ for $w\in V_1 \cup V_2 \backslash\{e\}$, then $w\not\in V_1$ by Condition (3) of Definition \ref{definition: vector set}. Hence, $w\in V_1\Delta V_2$, and by maximality of $u$, $w = u$.  Thus $u$ is even maximal in $V_1 \cup V_2 \backslash\{e\}$.

Now, since $e \in \mathrm{cl}(V_2 \backslash \{e\})$, we have
\begin{equation}
\mathrm{cl}(V_1 \backslash\{u\}) = \mathrm{cl}(V_1\backslash\{u, e\}) \vee \mathrm{cl}(e) \subseteq \mathrm{cl}(V_1 \backslash \{u, e\} \cup V_2 \backslash\{e\}) = \mathrm{cl}((V_1 \cup V_2\backslash\{e\})\backslash\{u\}).
\end{equation}
Since, we have $u\in \mathrm{cl}(V_1 \backslash\{u\})$, we conclude that 
\[
u \in \mathrm{cl}((V_1 \cup V_2\backslash\{e\})\backslash\{u\}).
\]
Maximality of $u$ ensures we can apply Lemma \ref{lemma: bend relations from element of closure of valuated matroid} to $S=(V_1\cup V_2 )\backslash \{e\}$ and $v=u$, to see that there is a subset $V\subseteq (V_1\cup V_2\backslash\{e\})_{\downarrow}$, which is a vector set containing $u$.
\end{proof}

\begin{lem}\label{lemma: rescaling lemma}
Let $V, V'\subseteq \hat{M}$ be finite subsets, and assume $\textbf{0}\not\in V \cup V'$.  Suppose $\mathrm{supp}(V')\subseteq \mathrm{supp}(V)$.  Then there exists $c\in \mathbb{R}_{\max}^\times$ such that $cV' \cap V\neq \emptyset$ and $cV'\subseteq V_\downarrow$.
\end{lem}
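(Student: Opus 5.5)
The idea is to choose the scalar $c$ as the largest one for which $cV'$ still lies below $V$. We assume $V'\neq\emptyset$, as is implicit in the statement, since otherwise $cV'\cap V$ is empty for every $c$.

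First, for each $x\in\mathrm{supp}(V)$, set
\[
m(x)=\max\{b\in\mathbb{R}_{\max}\mid bx\in V\}.
\]
This maximum exists because $V$ is finite. It lies in $\mathbb{R}_{\max}^\times$ because $\textbf{0}\notin V$, and by construction $m(x)x\in V$. For each element $ax\in V'$ we have $a\in\mathbb{R}_{\max}^\times$ since $\textbf{0}\notin V'$. We also have $x\in\mathrm{supp}(V')\subseteq\mathrm{supp}(V)$, so $m(x)$ is defined. Hence the ratio $m(x)a^{-1}\in\mathbb{R}_{\max}^\times$ makes sense (in ordinary notation, $m(x)-a\in\mathbb{R}$).

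Next, define
\[
c=\min\{m(x)a^{-1}\mid ax\in V'\}.
\]
This is a minimum over a finite nonempty set of elements of $\mathbb{R}_{\max}^\times$, so $c\in\mathbb{R}_{\max}^\times$.

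For the inclusion $cV'\subseteq V_\downarrow$, take any $ax\in V'$. By the choice of $c$ we have $ca\leq m(x)$, so $(ca)x\leq m(x)x$ in the partial order on $\hat{M}$. Since $m(x)x\in V$, this gives $(ca)x\in V_\downarrow$.

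For the intersection, let $a_0x_0\in V'$ attain the minimum defining $c$. Then $ca_0=m(x_0)$, so $c(a_0x_0)=m(x_0)x_0\in V$. Therefore $cV'\cap V\neq\emptyset$.

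No step is a real obstacle. The only care needed is that every $m(x)$ and every coefficient $a$ is invertible, which is exactly where the hypotheses $\textbf{0}\notin V\cup V'$ and $\mathrm{supp}(V')\subseteq\mathrm{supp}(V)$ are used.
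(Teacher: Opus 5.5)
Your proof is correct and follows the paper's argument: the paper also writes $V=\{a_1x_1,\ldots,a_nx_n\}$ and $V'=\{b_1x_1,\ldots,b_kx_k\}$, sets $c=\min_i b_i^{-1}a_i$, and reads off both conclusions from $cb_i\le a_i$ for all $i$, with equality for some $i$. Your $m(x)$ device, which handles several multiples of the same $x$ in $V$, and your explicit assumption $V'\neq\emptyset$ just make precise two points that the paper's indexing leaves tacit.
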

\begin{proof}
Since $\mathrm{supp}(V')\subseteq \mathrm{supp}(V)$, we may write $V = \{a_1x_1, \ldots, a_nx_n\}$ and $V' = \{b_1x_1, \ldots, b_kx_k\}$ for some $k \leq n$ and some $a_i, b_i\in \mathbb{R}_{\max}^\times$ and $x_i\in E$.  Set $c = \min_i b_i^{-1}a_i$.  Then $cb_i\leq a_i$ for all $i$ and there is some $i$ where equality is attained.  It is easy to see that $cV' = \{cb_1x_1, \ldots, cb_kx_k\}$ has the stated properties.
\end{proof}

The following lemma is another step towards proving the circuit elimination property from the valuated closure axioms.  The following lemma will essentially allow us to extract a smaller circuit set containing a specified element $u$ from a vector set containing $u$.  Circuit elimination will then be done by using Lemma \ref{lemma: vector elimination property}, and applying Lemma \ref{lemma: circuit set inside vector set} to the resulting vector set.

\begin{lem}\label{lemma: circuit set inside vector set}
Let $E$ be a finite set.  Let $\mathrm{cl}$ be an algebraic closure operator on $\hat{M}$ satisfying the conclusions of Proposition \ref{proposition: valuated closure axioms}.  Let $S\subseteq \hat{M}$ and $u\in \hat{M}$.  Suppose there is a vector set $V\subseteq S_\downarrow$ such that $u\in V$.  Then there is a vector set $C\subseteq S_\downarrow$ such that $u\in C$ and such that $C$ has minimal support among all vector sets.
\end{lem}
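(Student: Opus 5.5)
We read ``minimal support among all vector sets'' as follows: no vector set $W$ satisfies $\mathrm{supp}(W)\subsetneq \mathrm{supp}(C)$. The plan is a minimal-counterexample argument. Since $E$ is finite, the collection of vector sets $C\subseteq S_\downarrow$ with $u\in C$ is nonempty (it contains $V$), and their supports are subsets of $E$. So we can choose such a $C$ whose support is minimal with respect to inclusion among this restricted family. The remaining work is to upgrade this to minimality among \emph{all} vector sets.

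Suppose, for contradiction, that $V'$ is a vector set with $\mathrm{supp}(V')\subsetneq\mathrm{supp}(C)$. Both sets avoid $\textbf{0}$, so Lemma \ref{lemma: rescaling lemma} gives $c\in\mathbb{R}_{\max}^\times$ with $cV'\cap C\neq\emptyset$ and $cV'\subseteq C_\downarrow\subseteq S_\downarrow$. By axiom (2) of Proposition \ref{proposition: valuated closure axioms}, $\mathrm{cl}(cA)=c\,\mathrm{cl}(A)$, so $cV'$ is again a vector set with the same support as $V'$. Two cases follow.

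\emph{Case $u\in cV'$.} Then $cV'$ is a vector set in $S_\downarrow$ containing $u$ whose support is strictly smaller than $\mathrm{supp}(C)$, contradicting the choice of $C$.

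\emph{Case $u\notin cV'$.} Pick $e\in cV'\cap C$, and write $e=ax$. We want to apply Lemma \ref{lemma: vector elimination property} to $V_1=C$, $V_2=cV'$, and the element $u$.
\begin{itemize}
\item First we check that $u$ is maximal in $C\Delta cV'$. Elements of $C$ lie over distinct elements of $E$, so they are pairwise incomparable. Every element of $cV'$ is below some element of $C$. Hence nothing in $C\Delta cV'$ lies strictly above $u$.
\item The lemma then yields a vector set $W\subseteq (C\cup cV'\backslash\{e\})_\downarrow\subseteq S_\downarrow$ with $u\in W$.
\item Next we check that $W$ has strictly smaller support than $C$. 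Since $C$ and $cV'$ each contain exactly one multiple of $x$, namely $e$, the set $C\cup cV'\backslash\{e\}$ contains no multiple of $x$. Downward closure does not add any nonzero multiple of $x$, and $\textbf{0}\notin W$. Therefore $\mathrm{supp}(W)\subseteq\mathrm{supp}(C)\backslash\{x\}$.
\end{itemize}
This again contradicts the minimality of $C$ in the restricted family.

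The main point needing care is the second case. There we must verify the maximality hypothesis of Lemma \ref{lemma: vector elimination property}, and check that the eliminated element $e$ really removes $x$ from the support. Both checks use condition (3) of Definition \ref{definition: vector set}, which says each element of $E$ appears at most once in a vector set.
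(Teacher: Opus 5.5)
Your proof is correct and follows essentially the same route as the paper: choose $C$ of minimal support among vector sets with $u\in C\subseteq S_\downarrow$, use Lemma \ref{lemma: rescaling lemma} to rescale a hypothetical smaller-support vector set into $C_\downarrow$, and derive a contradiction from Lemma \ref{lemma: vector elimination property}, which removes $x$ from the support. Your version also spells out the rescaling step that the paper leaves implicit when it passes from minimality among vector sets in $S_\downarrow$ to minimality among all vector sets.
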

\begin{proof}
Among all vector sets $V$ with $u\in V\subseteq S_\downarrow$ choose $V$ to have minimal support.  If $V$ has minimal support among vector sets with $V\subseteq S_\downarrow$, then $V$ has minimal support among all vector sets so the claim holds.  Hence we may assume otherwise; there exists a vector set $V'\subseteq S_\downarrow$ with $\mathrm{supp}(V') \subsetneq \mathrm{supp}(V)$.  For any $c\in \mathbb{R}_{\max}^\times$, it is easy to see that $cV'$ is also a vector set and also has the stated properties.  Since $\mathrm{supp}(V')\subseteq \mathrm{supp}(V)$, by Lemma \ref{lemma: rescaling lemma} we may assume $V'\subseteq V_\downarrow$ and $V \cap V'\neq \emptyset$. Let $e'\in V\cap V'$. 

    By minimality of $V$, $u\not\in V'$, so $u\in V \Delta V'$.  To show $u$ is maximal in $V\Delta V'$, let $w\in V\Delta V'$ with $u\leq w$.  If $w\in V$ then $u = w$ by the third condition of a vector set, so assume $w\in V'\backslash V \subseteq V_\downarrow$.  Then there exists $w'\in V$ with $u\leq w \leq w'$.  But again by the third condition of a vector set, we have equality throughout, establishing maximality.
    
    Since $V, V'$ are vector sets with $e'\in V\cap V'$ and such that $u\in V\Delta V'$ is maximal, we may apply Lemma \ref{lemma: vector elimination property} to obtain a vector set $V''\subseteq (V \cup V' \backslash \{e'\})_\downarrow$ such that $u\in V''$.  Since $V\subseteq S_\downarrow$ and $V'\subseteq V_\downarrow$, we get that $V''\subseteq S_\downarrow$.  Write $e'=cz$ with $(c,z)\in \mathbb{R}_{\max}\times E$.  Then
    \begin{equation}
        \mathrm{supp}(V'') \subseteq \mathrm{supp}(V \cup V' \backslash \{e'\}) = \mathrm{supp}(V) \cup \mathrm{supp}(V') \backslash \{z\} = \mathrm{supp}(V) \backslash\{z\}.
    \end{equation}
    This contradicts minimality of $V$, establishing the claim.
\end{proof}

\begin{mythm}\label{proposition: valuated matroid closure cryptomorphism}
Let $E$ be a finite set.  Let $\mathrm{cl}$ be an algebraic closure operator on $\hat{M}$ satisfying the conclusions of Proposition \ref{proposition: valuated closure axioms}.  Then there is a unique valuated matroid structure $M$ with underlying set $E$ such that $\mathrm{cl}$ is the closure operator associated with $M$.
\end{mythm}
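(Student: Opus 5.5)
Define $\mathcal{C}$ to be the set of vector sets of minimal support, i.e. vector sets $C$ such that no vector set $V$ has $\mathrm{supp}(V)\subsetneq \mathrm{supp}(C)$. To each such $C$ attach the function $c:E\to\mathbb{R}_{\max}$ with $c(x)=a$ if $ax\in C$ and $c(x)=\textbf{0}$ otherwise. By condition (3) of Definition \ref{definition: vector set} this is well defined. The plan is to show this family satisfies (VC1), (VC2), (VC3), (VCE) of Theorem \ref{theorem: circuits of valuated}, giving a valuated matroid $M$. We then show the closure operator of $M$ equals $\mathrm{cl}$. Uniqueness follows from Lemma \ref{lemma: closure operator determines valuated circuits}: the closure operator of a valuated matroid determines its circuit sets, hence its circuits, hence the matroid.

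The axioms (VC1), (VC2) and (VC3) are routine. Vector sets are nonempty here, since we only consider those containing some $u$. Minimality of support gives (VC2). Axiom (2) of Proposition \ref{proposition: valuated closure axioms} shows $aV$ is a vector set whenever $V$ is, giving (VC3). For (VCE), take $X,Y\in\mathcal{C}$ with $X_u=Y_u\neq\textbf{0}$ and $X_v>Y_v$. Let $V_1,V_2$ be the corresponding sets, $e=X_u u\in V_1\cap V_2$ and $w=X_v v$. Then $w\in V_1\setminus V_2$, and $w$ is maximal in $V_1\Delta V_2$: any larger element lies over $v$, and $V_2$'s element over $v$ (if any) is strictly smaller. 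Lemma \ref{lemma: vector elimination property} yields a vector set $V\subseteq ((V_1\cup V_2)\setminus\{e\})_\downarrow$ containing $w$. Lemma \ref{lemma: circuit set inside vector set} with $S=(V_1\cup V_2)\setminus\{e\}$ then gives $C\in\mathcal{C}$ with $w\in C\subseteq S_\downarrow$. Its function $Z$ satisfies $Z_v=X_v$ and $Z\le\max(X,Y)$. Also $Z_u=\textbf{0}$: the only elements of $(V_1\cup V_2)$ over $u$ are $e$ itself, since $X_u=Y_u$, and $e$ was removed.

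The main obstacle is the equality of closure operators. Let $\mathrm{cl}_M$ be the closure of $M$, described by Lemma \ref{lemma: circuits from valuated flats}.

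For $\mathrm{cl}_M(S)\subseteq\mathrm{cl}(S)$: $S_\downarrow\subseteq\mathrm{cl}(S)$ by axiom (3) together with extensivity. If $C$ is a circuit set with $v\in C$ and $C\setminus\{v\}\subseteq S_\downarrow$, then $v\in\mathrm{cl}(C\setminus\{v\})\subseteq \mathrm{cl}(S_\downarrow)=\mathrm{cl}(S)$, since $C$ is a vector set.

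For the reverse inclusion, take $ax\in\mathrm{cl}(S)$ with $ax\notin S_\downarrow$. By finiteness of the algebraic closure, we may assume $S$ is finite. Since $\mathrm{cl}(S)=\mathrm{cl}(S_\downarrow)$, we may replace $S$ by a finite set that is downward closed enough. Remove from $S$ any $bx$ with $b<a$; axiom (5) keeps $ax$ in the closure. Now no $bx$ with $b\ge a$ lies in $S$. Apply Lemma \ref{lemma: bend relations from element of closure of valuated matroid} to $S\cup\{ax\}$ and $v=ax$ to get a vector set $T\subseteq(S\cup\{ax\})_\downarrow$ containing $ax$. Then apply Lemma \ref{lemma: circuit set inside vector set} to get $C\in\mathcal{C}$ with $ax\in C\subseteq (S\cup\{ax\})_\downarrow$. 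Since $C$ has only one element over $x$, we get $C\setminus\{ax\}\subseteq S_\downarrow$, so $ax\in\mathrm{cl}_M(S)$. Finally, $\textbf{0}$ lies in both closures by axiom (1).

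Along the way we must check that the circuit sets of $M$ are exactly the elements of $\mathcal{C}$. Here I expect care is needed about whether the valuated matroid produced by Theorem \ref{theorem: circuits of valuated} has exactly $\mathcal{C}$ as circuits, but that is the content of that theorem.
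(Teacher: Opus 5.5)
Your proposal is correct and follows essentially the same route as the paper. Circuit sets are the minimal-support vector sets, elimination comes from Lemma~\ref{lemma: vector elimination property} followed by Lemma~\ref{lemma: circuit set inside vector set}, the reverse inclusion of closures uses Lemma~\ref{lemma: bend relations from element of closure of valuated matroid} and the fact that a circuit set has only one element over $x$, and uniqueness comes from Lemma~\ref{lemma: closure operator determines valuated circuits}. Your direct check of (VCE) and your explicit reduction to finite $S$ are slightly more careful versions of steps the paper states briefly. As in the paper, vector sets should be taken nonempty so that ``minimal support'' is meaningful.
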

\begin{proof}
Call $C\subseteq \hat{M}$ a \emph{circuit set} if it is a vector set with minimal support.  

Let $C_1, C_2$ be circuit sets and $e\in C_1\cap C_2$.  Let $u$ be a maximal element of $C_1\Delta C_2$ under the partial order on $\hat{M}$. By Lemma \ref{lemma: vector elimination property}, there is a vector set $V\subseteq (C_1 \cup C_2\backslash\{e\})_\downarrow$ with $u\in V$. By Lemma \ref{lemma: circuit set inside vector set}, there is a circuit set with $u\in C \subseteq (C_1 \cup C_2 \backslash\{e\})_\downarrow$.

Thus we have the elimination property (Lemma \ref{lemma: circuit elimination}) for circuit sets.  Since the more standard characterization of valuated matroids uses valuated circuit maps, in order to show we have constructed a valuated matroid, we must rephrase the above in terms of valuated circuit maps.

For each circuit set $C$, define $\psi_C: E \rightarrow \mathbb{R}_{\max}$ by $\psi_C(x) = a$ if $ax\in C$ and $\psi_C(x) = -\infty$ if $ax\not\in C$ for all $a$.  Maps $\psi_C$ with $C$ a circuit set will be called \emph{valuated circuits}.  The third condition of Definition \ref{definition: vector set} implies $\psi_C$ is well-defined.  The fact that $\textbf{0}\not\in C$ for any circuit set ensures the zero map is not a valuated circuit.  Strong elimination for valuated circuits is a simple rephrasing of the corresponding property of circuit sets (Lemma \ref{lemma: circuit elimination}).  It is easy to see that if $C$ is a circuit set and $a\in\mathbb{R}_{\max}^\times$ then $aC$ is a circuit set, and hence $a\psi_C = \psi_{aC}$ is a valuated circuit.  Since $\supp(\psi_C) = \supp(C)$, it is clear from the minimality in the definition of circuit sets that we can never have $\supp(\psi_{C_1})\subsetneq \supp(\psi_{C_2})$.  Thus all axioms of a valuated matroid are satisfied.

Next we show that the closure operator $\mathrm{cl}'$ associated to this valuated matroid is $\mathrm{cl}$. Let $S\subseteq\hat{M}$.  Suppose $v\in \mathrm{cl}'(S)$.  Then either $v\in S_\downarrow\subseteq\mathrm{cl}(S)$ or there is a circuit set $C$ such that $v\in C$ and $C \backslash \{v\}\subseteq S_\downarrow\subseteq\mathrm{cl}(S)$.  In the latter case, by the definition of a circuit set, we have $v\in \mathrm{cl}(C \backslash \{v\}) \subseteq \mathrm{cl}(S)$.

Conversely, suppose $v = ax\in \mathrm{cl}(S)$.  If no element of $S$ has the form $bx$ for $b > a$, then using Lemma \ref{lemma: bend relations from element of closure of valuated matroid}, we obtain a vector set $V$ with $v\in V$ and $V \backslash \{v\}\subseteq S_\downarrow$.  By Lemma \ref{lemma: circuit set inside vector set}, there is a circuit set $C$ with $v\in C$ and $C\backslash\{v\}\subseteq C \subseteq (S\cup \{v\})_\downarrow$.  Since $C$ can not contain two multiples of $x$, we must have $C\backslash\{v\}\subseteq S_\downarrow$. Then $v\in \mathrm{cl}'(S)$.  If $bx\in S$ for $b \geq a$, then $v\in S_\downarrow\subseteq\mathrm{cl}'(S)$. 

We have a map from valuated matroids on $E$ to algebraic closure operators on $\hat{M}$ satisfying the conclusions of Proposition 3.11.  So far, we have constructed a right inverse to this map and hence have established surjectivity. It suffices to show injectivity, i.e. that if two valuated matroids on $E$ have the same closure operator then they are equal. This follows either from Lemma \ref{lemma: closure operator determines valuated circuits} or from the characterization of covectors in terms of flats that appears below.
\end{proof}

We make several remarks below. 

\begin{rmk}\label{remark: checking axioms on finite sets}
When checking the axioms of Proposition \ref{proposition: valuated closure axioms}, one can consider only closures of finite sets.  To illustrate the technique, consider an algebraic closure operator which satisfies axiom (4) on finite sets.  Let $S\subseteq\hat{M}$ be infinite and suppose $ax\in \mathrm{cl}(S\cup\{by\})$.  Since $\mathrm{cl}$ is algebraic, there is a finite subset $S'$ such that $ax\in \mathrm{cl}(S'\cup\{by\})$.  Then by applying axiom (4) to the finite set $S'$, we get some $c \leq b$ such that $ax\in\mathrm{cl}(S'\cup \{cy\})\subseteq\mathrm{cl}(S\cup\{cy\})$ and $cy\in\mathrm{cl}(S'\cup\{ax\})\subseteq\mathrm{cl}(S\cup\{ax\})$.  Hence the axiom holds for all infinite sets.
\end{rmk}

\begin{rmk}
Theorem \ref{proposition: valuated matroid closure cryptomorphism} also works over $\mathbb{B}$ instead of $\mathbb{R}_{\max}$.  This gives a cryptomorphism of ordinary matroids in terms of closure operators on $E \cup \{0\}$ instead of $E$.  This is easily remedied as follows.

Given a set $X$ and an element $\mathbf{0}\not\in X$, algebraic closure operators $\mathrm{cl}$ on $X \cup \{\mathbf{0}\}$ with $\mathbf{0}\in\mathrm{cl}(\emptyset)$ are in one-to-one correspondence with algebraic closure operators $\mathrm{cl}'$ on $X$ with the correspondence given by $\mathrm{cl}'(S) = \mathrm{cl}(S) \backslash\{\mathbf{0}\}$ and $\mathrm{cl}(S) = \mathrm{cl}'(S\backslash\{\mathbf{0}\}) \cup \{\mathbf{0}\}$.  Using this correspondence, one may give a cryptomorphic axiomatization of valuated matroids on $E$ in terms of closure operators on $\mathbb{R} \times E \cong \hat{M} - \{\mathbf{0}\}$ satisfying certain axioms.  We leave the details to the interested reader.

Besides omitting condition (1) of Proposition \ref{proposition: valuated closure axioms}, the other main difference in the axiomatization of $\mathrm{cl}'$ comes from condition (4) of Proposition \ref{proposition: valuated closure axioms}.  To see the equivalent condition on $\mathrm{cl}'$ to condition (4), it is helpful to rephrase (4) in a way that doesn't use the element $\mathbf{0}$.  It suffices to check condition (4) when $\mathbf{0}\not\in S$, and condition (4) is trivial if $a = -\infty$ or $b = -\infty$.  But we must consider cases according to whether $c = -\infty$.  One may check that when $c = -\infty$, the conclusion of condition (4) is equivalent to $ax\in \mathrm{cl}(S)$.  Hence condition (4) is equivalent to the condition that for $S\subseteq \hat{M} - \mathbf{0}$, $a, b\in \mathbb{R}_{\max}^\times$ and $x, y\in E$, if $ax\in \mathrm{cl}(S \cup \{by\})\backslash\mathrm{cl}(S)$ then there is some $c\in \mathbb{R}_{\max}^\times$ with $c\leq b$ such that $ax\in \mathrm{cl}(S\cup \{cy\})$ and $cy\in \mathrm{cl}(S\cup\{ax\})$.  Using that $\mathrm{cl}(T) = \mathrm{cl}'(T) \cup \{\mathbf{0}\}$ for all $T$ not containing $\mathbf{0}$, we see that $\mathrm{cl}$ satisfies the above condition (hence condition (4)) if and only if $\mathrm{cl}'$ satisfies this condition.
\end{rmk}

Next we turn to establishing a link between flats and covectors.  For this purpose we will consider covectors with values in an $\mathbb{R}_{\max}$-module rather than in $\mathbb{R}_{\max}$ itself.

\begin{mydef}
Let $M$ be a valuated matroid on a finite set $E$.  Let $A$ be an $\mathbb{R}_{\max}$-module.  An \emph{$A$-valued covector} is a function $f: E\rightarrow A$ such that for any circuit $c$ and any $x\in E$,
\begin{equation}
    \bigvee_{y\in E} c(y) f(y) = \bigvee_{y\in E\backslash\{x\}} c(y) f(y).
\end{equation}
\end{mydef}

\begin{rmk}
Let $M$ be a valuated matroid on a finite set $E$. Let $Q_M$ be the quotient of the free $\mathbb{R}_{\max}$-module generated by $E$ by relations
\begin{equation}
    \bigvee_{y\in E} c(y) y = \bigvee_{y\in E\backslash\{x\}} c(y) y
\end{equation}
for every circuit $c$ and every $x\in E$. This module was introduced in \cite{giansiracusa2018grassmann}.  The obvious map $i: E\rightarrow Q_M$ is then a $Q_M$-valued covector.  In fact, by the universal properties of free and quotient modules, it is the universal covector in the sense that for any $A$-valued covector $f: E\rightarrow A$ (with $A$ an $\mathbb{R}_{\max}$-module), there is a unique module homomorphism $\phi: Q_M\rightarrow A$ such that $f =\phi i$.  By this universal property, we recover the known result that the dual module $V_M = \mathrm{Hom}(Q_M, \mathbb{R}_{\max})$ (commonly called the tropical linear space of $M$) may be identified with the set of $\mathbb{R}_{\max}$-valued covectors.
\end{rmk}

\begin{rmk}
It has been suggested (e.g. in the introduction of \cite{fink2025extensions}) that $V_M$ should play the role of the meet-semilattice of finitely generated flats in the valuated setting.  Similarly, \cite{hirai2019uniform} studies the subset $V_M \cap \mathbb{Z}^n$ from a lattice-theoretic perspective.  An alternative perspective comes from \cite[Theorem 1.1]{crowley2020module}, which identifies $Q_M$ with the join-semilattice of flats in the ordinary matroid setting.  Over the Boolean semifield $\mathbb{B}$, module duality is the same as lattice duality, so these perspectives are equivalent.  However, over $\mathbb{R}_{\max}$ this is no longer true.  It will turn out that the interpretation of $Q_M$ in terms of flats has a generalization in terms of our definition of flats of valuated matroids.  However, $V_M$ will be the module-theoretic dual rather than the lattice-theoretic dual of the join semilattice of finitely generated flats.
\end{rmk}

As the remark hints, our next goal will be to identify $Q_M$ with the join semilattice of finitely generated flats.

\begin{lem}
Let $M$ be a valuated matroid on a finite set $E$. Let $\mathcal{S}$ be the set of finitely generated flats of $M$. Then $\mathcal{S}$ has an $\mathbb{R}_{\max}$-module structure where sum is join and scalar multiplication is $aF:=\{av \mid v \in F\}$. 
\end{lem}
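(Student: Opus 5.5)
We have to show that $\mathcal{S}$, the set of finitely generated flats, is an $\mathbb{R}_{\max}$-module. We define the join of two flats by $F\vee G := \mathrm{cl}(F\cup G)$ and set the zero element to be $\mathrm{cl}(\emptyset)$. Then we check closure of $\mathcal{S}$ under these operations and verify the module axioms.

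\textbf{Closure under join and scaling.} If $F=\mathrm{cl}(A)$ and $G=\mathrm{cl}(B)$ with $A,B$ finite, then by idempotence and monotonicity of the algebraic closure operator $\mathrm{cl}$,
\[
\mathrm{cl}(F\cup G)=\mathrm{cl}(A\cup B).
\]
This is finitely generated, and it is the least flat containing both $F$ and $G$, so $(\mathcal{S},\vee)$ is a join semilattice. It is a commutative idempotent monoid with identity $\mathrm{cl}(\emptyset)$, which lies below every flat.

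For scalars $a\in\mathbb{R}_{\max}^\times$, Proposition \ref{proposition: valuated closure axioms}(2) gives $aF=a\,\mathrm{cl}(A)=\mathrm{cl}(aA)$. So $aF$ is again a finitely generated flat.

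The case $a=\textbf{0}$ needs a convention. Literally, $\textbf{0}F=\{\textbf{0}\}$, and this is a flat only if $\mathrm{cl}(\emptyset)=\{\textbf{0}\}$. I will therefore interpret $\textbf{0}\cdot F$ as the zero element $\mathrm{cl}(\emptyset)=\mathrm{cl}(\{\textbf{0}\})$, that is, as the closure of $\textbf{0}A$. This is consistent because $\mathrm{cl}(\emptyset)$ contains $\textbf{0}$ and is the smallest flat.

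\textbf{Module axioms.} For $a,b\in\mathbb{R}_{\max}^\times$ the identities $(ab)F=a(bF)$ and $1F=F$ are immediate from the definition. For distributivity over the join, applying Proposition \ref{proposition: valuated closure axioms}(2) again gives
\[
a(F\vee G)=a\,\mathrm{cl}(F\cup G)=\mathrm{cl}(aF\cup aG)=aF\vee aG.
\]

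The remaining identity, $(a+b)F=aF\vee bF$ with $a+b=\max(a,b)$, is the main point. Say $a\le b$. Then we need $aF\subseteq bF$. I would prove this by taking $ax'\in aF$, so $x'\in F$, and writing $x'=cx$. Then $ax'=(a+c)x\le (b+c)x$ in $\hat M$, because the multiplicative notation makes scalar multiplication order preserving. Since $(b+c)x\in bF$ and the flat $bF$ is downward closed, we get $ax'\in bF$. Hence $aF\vee bF=bF=\max(a,b)F$.

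The cases where $a$ or $b$ equals $\textbf{0}$ reduce to the zero convention: $\textbf{0}F$ is the identity for $\vee$, and for the zero module element $a\,\mathrm{cl}(\emptyset)=\mathrm{cl}(\emptyset)$ by Proposition \ref{proposition: valuated closure axioms}(2).

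I expect this monotonicity step, together with settling the convention at $\textbf{0}$, to be the only real obstacle. Everything else follows formally from Proposition \ref{proposition: valuated closure axioms}.
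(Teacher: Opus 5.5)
Your proof is correct and follows the paper's argument. Distributivity over joins comes from $\mathrm{cl}(aS)=a\,\mathrm{cl}(S)$ (Proposition \ref{proposition: valuated closure axioms}(2)), and $(a+b)F=aF\vee bF$ comes from the inclusion $aF\subseteq bF$ for $a\le b$ via downward closedness, which makes explicit the step the paper's chain $(a+b)\mathrm{cl}(S)=b\,\mathrm{cl}(S)=\mathrm{cl}(bS)$ leaves implicit. Your convention $\textbf{0}\cdot F:=\mathrm{cl}(\emptyset)$ is a genuine improvement: the literal formula gives $\{\textbf{0}\}$, which is not a flat when $M$ has loops, and the paper silently skips this case. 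One small slip: your ``$(a+c)x$'' should read $(ac)x$ in the paper's multiplicative notation, since you use $a+b=\max(a,b)$ in the same paragraph.
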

\begin{proof}
The only nontrivial part is the distributivity. But, one has
\[
a\text{cl}(S)  + a \text{cl}(T)= \text{cl}(aS  \cup aT) = a(\text{cl}(S) + \text{cl}(T)),
\]
showing one distributive law. For the other, suppose without loss of generality that $a\leq b$.
\begin{equation}
    (a + b)\mathrm{cl}(S) = b\mathrm{cl}(S) = \mathrm{cl}(bS) = \mathrm{cl}((a + b)S).
\end{equation}
\end{proof}

\begin{pro}\label{proposition: iso $Q_M$ to flats}
Let $M$ be a valuated matroid on a finite set $E$.  Let $\mathcal{S}$ be the $\mathbb{R}_{\max}$-module of finitely generated flats.  Then $\mathcal{S}$ is canonically isomorphic to $Q_M$.  In particular the map $\iota: E\rightarrow\mathcal{S}$ given by $\iota(x) = \mathrm{cl}(\{x\})$ is the universal covector.
\end{pro}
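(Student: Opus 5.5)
I will prove the statement by exhibiting $\iota$ as a covector, getting the induced homomorphism $\phi: Q_M\to\mathcal{S}$ from the universal property, and then showing $\phi$ is bijective.

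\emph{Step 1: $\iota$ is an $\mathcal{S}$-valued covector.} Joins in $\mathcal{S}$ are given by $\mathrm{cl}(S)\vee\mathrm{cl}(T)=\mathrm{cl}(S\cup T)$, and scalar action by $a\,\mathrm{cl}(\{x\})=\mathrm{cl}(\{ax\})$ (Proposition \ref{proposition: valuated closure axioms}(2)). For a circuit $c$ we have
\[
\bigvee_{y} c(y)\iota(y)=\mathrm{cl}(C),
\]
where $C$ is the corresponding circuit set; the terms with $c(y)=\textbf{0}$ contribute $\mathrm{cl}(\{\textbf{0}\})=\mathrm{cl}(\emptyset)$, which is harmless. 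Omitting $x$ gives $\mathrm{cl}(C\backslash\{c(x)x\})$. The covector identity therefore amounts to $c(x)x\in \mathrm{cl}(C\backslash\{c(x)x\})$, which is exactly the flat condition in Definition \ref{definition: flats}. So $\iota$ is a covector, and the universal property of $Q_M$ yields a unique homomorphism $\phi:Q_M\to\mathcal{S}$ with $\phi(x)=\mathrm{cl}(\{x\})$.

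\emph{Step 2: surjectivity.} A finitely generated flat is $\mathrm{cl}(\{a_1x_1,\dots,a_nx_n\})=\bigvee a_i\,\mathrm{cl}(\{x_i\})$, which is the image of $\sum a_ix_i$. This step is routine.

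\emph{Step 3: injectivity.} This is the main step. Every element of $Q_M$ is represented by some $f=\sum_{x} f(x)x$ with $f:E\to\mathbb{R}_{\max}$. Given such $f$, set $F=\mathrm{cl}(\{f(x)x\})$ and define the normal form $\bar f(x)=\max\{a: ax\in F\}$. The maximum exists because $\mathrm{cl}$ of a finite set meets each ray in a closed downward set. To see this, use Lemma \ref{lemma: circuits from valuated flats}: elements not in $S_\downarrow$ come from circuit sets $C$ with $C\backslash\{v\}\subseteq S_\downarrow$. Rescaling such a circuit shows $v=ax$ lies in $\mathrm{cl}(S)$ iff $a\le$ a value determined by the circuits. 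With only finitely many circuit supports, this bound is a max over finitely many quantities, or else $x$ is a loop direction with all $a$ allowed. Loops must be handled separately; in that case $x=\textbf{0}$ in $Q_M$ via the singleton circuit.

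I would show that $f\sim\bar f$ in $Q_M$ by adding one generator at a time. Suppose $ax\in F\setminus S_\downarrow$, where $S=\{f(y)y\}$. Lemma \ref{lemma: circuits from valuated flats} provides a circuit set $C\ni ax$ with $C\backslash\{ax\}\subseteq S_\downarrow$. The bend relation for this circuit, with $ax$ omitted, reads $\bigvee_{C} = \bigvee_{C\backslash\{ax\}}$. Since every term of $C\backslash\{ax\}$ is $\le$ a term of $f$, we get $f+ax = f+\bigvee_C = f+\bigvee_{C\backslash ax}=f$ in $Q_M$. Repeating this over the finitely many $x$ (taking $a=\bar f(x)$) gives $f=\bar f$ in $Q_M$.

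Consequently, if $\phi(f)=\phi(g)$, the flats coincide, so $\bar f=\bar g$ and hence $f=g$ in $Q_M$. This proves injectivity, and $\phi$ is the desired isomorphism under which $\iota$ corresponds to $i$.

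The obstacle I expect is the attainment of the maximum $\bar f(x)$ together with the loop case. If the direct argument does not work, I would avoid maxima entirely. Instead I would note that $F=\bigcup$ of the finitely many one-step additions from Lemma \ref{lemma: circuits from valuated flats}: $F$ is $S_\downarrow$ together with elements each dominated by some $ax$ coming from a single circuit relation. Each such element is absorbed into $f$ by the computation above, and downward-closed pieces are absorbed automatically since $bx\le ax$ gives $ax+bx=ax$.
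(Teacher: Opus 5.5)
Your proof is correct, and its key step is the same as the paper's, but the argument runs in the opposite direction.

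The paper verifies directly that $(\mathcal{S},\iota)$ has the universal property. For an arbitrary covector $f:E\to A$ it defines $\hat f(\mathrm{cl}(\{a_1x_1,\dots,a_nx_n\}))=\bigvee a_if(x_i)$. It proves this is well defined by showing, via Lemma \ref{lemma: circuits from valuated flats} and the covector identity, that $cf(z)\le \hat f(F)$ for every $cz\in F$. Hence $\hat f(F)$ is the largest of the values $cf(z)$ with $cz\in F$, and so depends only on $F$. Additivity of $\hat f$ is then checked by hand.

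You instead build the comparison map $\phi:Q_M\to\mathcal{S}$ and prove it is bijective. Your absorption computation ($f+ax=f$ in $Q_M$ whenever $ax\in\mathrm{cl}(S_f)$, where $S_f=\{f(x)x\mid x\in E\}$) is exactly the paper's well-definedness argument, specialized to $A=Q_M$ and the universal covector.

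What your route buys:
\begin{enumerate}
\item The homomorphism property comes for free from the universal property of $Q_M$.
\item Your Step 1 explicitly checks that $\iota$ is a covector. The paper's argument needs this to conclude $\mathcal{S}\cong Q_M$ from the universal property, but does not state it.
\end{enumerate}

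What the paper's route buys is that no normal form is needed. Your normal form $\bar f$, with its worries about attainment of the maximum and about loops, is in fact unnecessary. If $\mathrm{cl}(S_f)=\mathrm{cl}(S_g)$, then every $g(x)x$ lies in $\mathrm{cl}(S_f)$, so your absorption step gives $f+g=f$ in $Q_M$. Symmetrically $g+f=g$, hence $f=g$. The maximum does exist for non-loops, since circuits with a given support are unique up to scaling and there are finitely many supports, but you never need it.
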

 \begin{proof}
We prove $\mathcal{S}$ satisfies the universal property which implies that $\mathcal{S}$ is isomorphic to $Q_M$.

Let $A$ be an $\mathbb{R}_{\max}$-module and $f:E \rightarrow A$ be a covector.  We must show there is a unique homomorphism $\hat{f}: \mathcal{S}\rightarrow A$ such that $f = \hat{f}\iota$, i.e. $f(x) = \hat{f}(\mathrm{cl}(\{x\}))$.  In order for $\hat{f}$ to be a homomorphism, we must have 
\begin{equation}
    \hat{f}(\mathrm{cl}(\{a_1x_1, \ldots, a_nx_n\})) = \hat{f}(\bigvee_i a_i \mathrm{cl}(\{x_i\})) = \bigvee a_i f(x_i).
\end{equation}
This establishes uniqueness.  Define $\hat{f}$ via this formula.  

To show $\hat{f}$ is well-defined, we claim that for $F=\mathrm{cl}(\{a_1x_1, \ldots, a_nx_n\})\in \mathcal{S}$, we have
\begin{equation}
\hat{f}(F) = \bigvee_{cz\in F} cf(z).    
\end{equation}
In fact, since $a_ix_i\in F$ for all $i$, we have
\begin{equation}
\hat{f}(F) \leq \bigvee_{cz\in F} cf(z).    
\end{equation}
We want to show that for $cz\in F$, we have $cf(z)\leq \hat{f}(F)$.  Since $cz\in F = \mathrm{cl}(\{a_1x_1, \ldots, a_nx_n\})$, by Lemma \ref{lemma: circuits from valuated flats}, either $cz\leq a_ix_i$ for some $i$ or there exists a circuit set $C$ with $cz\in C$ and $C \backslash\{cz\} \subseteq \{a_1x_1, \ldots, a_nx_n\}_\downarrow$.  In the first case, $cf(z)\leq a_if(x_i)\leq \hat{f}(F)$, so we may assume the second case.  By reordering the $a_i$ and $x_i$, we may assume there exist $k$ and $b_1, \ldots, b_k\in\mathbb{R}_{\max}$ such that $b_i\leq a_i$ for $i\leq k$ and $C = \{cz, b_1x_1, \ldots, b_kx_k\}$.  Since $f$ is a covector, we have
\begin{equation}
    cf(z)\leq cf(z) \vee \bigvee_{i\leq k} b_if(x_i) = \bigvee_{i\leq k} b_if(x_i) \leq \bigvee_{i\leq k} a_i f(x_i) \leq \bigvee_{i\leq n}a_if(x_i) = \hat{f}(F).
\end{equation}
Hence $\hat{f}$ is well-defined.

We clearly have $\hat{f}(\mathrm{cl}(\{x\})) = f(x)$.  Let $S = \{a_1x_1, \ldots, a_nx_n\}$ and $S' = \{b_1y_1, \ldots, b_my_m\}$.  Then 
\begin{equation}
\hat{f}(\mathrm{cl}(S)) \vee \hat{f}(\mathrm{cl}(S'))
= \bigvee a_if(x_i) \vee \bigvee b_jf(y_j)
= \hat{f}(\mathrm{cl}(S \cup S'))
= \hat{f}(\mathrm{cl}(S) \vee \mathrm{cl}(S')).
\end{equation}
Thus $\hat{f}$ preserves addition.  That $\hat{f}(a\mathrm{cl}(S)) = a\hat{f}(\mathrm{cl}(S))$ is trivial.  Hence $\hat{f}$ has the required properties.
\end{proof}

\section{Simple valuated matroids as Geometric $\mathbb{R}_{\max}$-modules}

Simple matroids are equivalent to geometric lattices, i.e. lattices that are atomic and semimodular.  Since finite lattices are the same as finitely generated $\mathbb{B}$-modules, it is natural to look for analogous conditions for $\R_{\max}$-modules. Rather than stating things only for $\mathbb{R}_{\max}$-modules, since there is no added difficulty, we consider an arbitrary idempotent semifield or its valuation semiring (defined below) for much of what follows.
 
To this end, in this section we will consider three conditions, namely, (1) atomicity, (2) semimodularity, and (3) (strongly) torsion-freeness. The conditions (1) and (2) are direct translations of the same concepts in lattice theory to module theory over an idempotent semifield. The third condition is a new condition which is not used in lattice theory (or for modules over $\mathbb{B}$). By a geometric $\mathbb{R}_{\max}$-module, we mean an $\mathbb{R}_{\max}$-module satisfying all three conditions.

\begin{mydef}
Let $K$ be an idempotent semifield.  We define its \emph{valuation subsemiring} $\mathcal{O}_K$ as $\{x\in K\mid x\leq 1_K\}$.\footnote{One can easily check that $\mathcal{O}_K$ is indeed a subsemiring.}
\end{mydef}

\begin{mydef}
Let $R$ be an idempotent semiring and $N$ be an $R$-module.  $N$ is \emph{torsion-free} if $ax = bx$ (with $a,b\in R$ and $x\in N$) implies $a = b$ or $x = 0$.
\end{mydef}

First we need the notion of an atomic module.  We will provide several reasonable definitions, and describe the relations between them. For an idempotent semiring $R$ , we denote $1_R=1$ and $0_R=0$ whenever the context is clear. Also, recall that for any module over an idempotent semiring $R$ is equipped with a canonical partial order, namely, $x\leq y$ if and only if $x+y=y$.

\begin{mydef}\label{definition: weakly cover}
Let $R$ be an idempotent semiring and let $N$ be an $R$-module and $x, y\in N$.  
\begin{enumerate}
    \item 
If $x\leq y$, we let $[x, y] := \{z\in N\mid x\leq z\leq y\}$.
    \item 
We say $y$ \emph{weakly covers} $x$ (denoted $x\preceq y$) if $x\leq y$ and every element of $[x, y] = [x, x + y]$ has the form $x + ay$ where $a\in R$ satisfies $a\leq 1$.
\item 
We say $x$ is a \emph{weak atom} if it weakly covers $0$.
\end{enumerate}   
\end{mydef}

\begin{rmk}
We can apply Definition \ref{definition: weakly cover} to $\mathbb{B}$-modules.  In this case $y$ weakly covers $x$ if and only if either $y=x$ or $y$ covers $x$ in the lattice theoretic sense.  Similarly a weak atom is an element that is either $0$ or an atom in the lattice-theoretic sense.
\end{rmk}

We may fix this minor discrepancy with the classical case by considering a stronger definition instead.  It will help to consider the following maps.

\begin{mydef}
Let $R = K$ or $\mathcal{O}_K$ for an idempotent semifield $K$.  Let $N$ be an $R$-module and let $x, y\in N$.  If $x\leq y$, we use $\phi_{x, y}: \mathcal{O}_K\rightarrow N$ to denote the map $\phi_{x, y}(a) = x + ay$.
\end{mydef}
Note that for $x\leq y$, $y$ weakly covers $x$ if and only if the map $\phi_{x, y}$ is surjective onto $[x,y]$.  We could instead require bijectivity.

\begin{mydef}
Let $R = K$ or $\mathcal{O}_K$ for an idempotent semifield $K$.  Let $N$ be an $R$-module and let $x, y\in N$.  We say $y$ \emph{covers} $x$ (denoted $x\prec y$) if $x\leq y$ and $\phi_{x,y}$ is a bijection.  $x$ is a \emph{atom} if it covers $0$.  We say $N$ is \emph{atomic} if every element is a sum of atoms. 
\end{mydef}

\begin{lem}\label{lemma: multiple of atom}
Let $R = K$ or $\mathcal{O}_K$ for an idempotent semifield $K$. Let $N$ be an $R$-module and $x\in N$ be an atom. Let $a\in R$ be nonzero.  Then $ax$ is an atom. 
\end{lem}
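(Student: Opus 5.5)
Since $a$ is nonzero and lies in $R\subseteq K$, and $K$ is a semifield, $a$ is invertible in $K$. If $R=K$, multiplication by $a$ is an automorphism of $N$ as an ordered monoid: it preserves addition and the order, and its inverse is multiplication by $a^{-1}$. So $[0,ax]=a[0,x]$, and $\phi_{0,ax}(b)=bax=a\phi_{0,x}(b)$. Hence $\phi_{0,ax}$ is the composition of the bijection $\phi_{0,x}\colon\mathcal{O}_K\to[0,x]$ with the bijection $[0,x]\to[0,ax]$, $z\mapsto az$. This composite is a bijection onto $[0,ax]$, so $ax$ is an atom.

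The main obstacle is the case $R=\mathcal{O}_K$. There $a\le 1$ and $a^{-1}$ need not lie in $R$, so multiplication by $a$ need not be an automorphism of $N$. I would argue directly.

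First, $0\le ax$ is clear, since $ax = a(0+x)$ and $0$ is the least element. Next, since $a\le 1$ we have $ax\le x$, so $[0,ax]\subseteq[0,x]$.

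For surjectivity, take $z\in[0,ax]$. Then $z\in[0,x]$, so $z=bx$ for a unique $b\in\mathcal{O}_K$. Now $z\le ax$ gives $bx + ax = ax$, i.e.\ $(a+b)x = ax$, and injectivity of $\phi_{0,x}$ (note $a+b\in\mathcal{O}_K$) yields $a+b=a$, so $b\le a$. Put $c=a^{-1}b$, computed in $K$. Then $c\le 1$, so $c\in\mathcal{O}_K$, and $c(ax)=bx=z$; the identity $c(ax)=(ca)x$ holds because $c,a\in\mathcal{O}_K=R$. Hence $\phi_{0,ax}$ maps onto $[0,ax]$.

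For injectivity, suppose $c(ax)=c'(ax)$ with $c,c'\in\mathcal{O}_K$. Then $(ca)x=(c'a)x$ with $ca,c'a\in\mathcal{O}_K$, so injectivity of $\phi_{0,x}$ gives $ca=c'a$. Multiplying by $a^{-1}$ in $K$ gives $c=c'$.

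This elementwise argument never uses $a^{-1}\in R$, so it also covers the case $R=K$. I would therefore present it uniformly for both cases, noting only that $\phi_{0,ax}$ has domain $\mathcal{O}_K$ and all relevant scalars lie in $\mathcal{O}_K\subseteq R$.
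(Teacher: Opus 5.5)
Your proof is correct and follows the paper's own proof. The paper uses the same two cases. When $a$ is a unit, it transports along multiplication by $a$, which is your first paragraph. When $a\le 1$, it notes $[0,ax]\subseteq[0,x]$, uses injectivity of $\phi_{0,x}$ to get $b\le a$, takes $c=a^{-1}b$, and proves uniqueness by cancelling $a$.

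One correction: drop your closing claim that the elementwise argument also covers $R=K$. That argument uses $a\le 1$, not merely $a\in R$, in two places:
\begin{itemize}
\item to get $ax\le x$, so that $z\in[0,ax]$ can be written as $z=bx$ with $b\in\mathcal{O}_K$;
\item to know that $a+b$, $ca$ and $c'a$ lie in $\mathcal{O}_K$, so that injectivity of $\phi_{0,x}$ applies.
\end{itemize}
A nonzero $a\in K$ need not satisfy $a\le 1$. For example, if $a>1$ in $\mathbb{R}_{\max}$, then $ax\not\le x$ for an atom $x$, since $ax\le x\le ax$ would force $a^{-1}x=x$. So the element $ax\in[0,ax]$ escapes $[0,x]$. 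For the unit case you must keep the automorphism argument of your first paragraph, as the paper does.
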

\begin{proof}
    We know either that $a\leq 1$ (if $R = \mathcal{O}_K$) or $a \in R^\times$ (if $R = K$).  Suppose first that $a\leq 1$.  Let $y\leq ax$.  Then $y \leq x$.  By atomicity, there is a unique $b\leq 1$ such that $y = bx$.  Thus $bx \leq ax$, i.e. $ax = (a + b)x$.  By atomicity of $x$, $a = a + b$, i.e. $b\leq a$.  Let $c = a^{-1}b\leq 1$.  Then $y = c(ax)$.  Moreover, this last equation uniquely specifies $c$, since if $y = d(ax)$ then by atomicity of $x$ we have $ac = ad$ and hence $c = d$.

    Now suppose $a$ is a unit and $y\leq ax$.  Then $a^{-1}y\leq x$, so there is a unique $b$ such that $a^{-1}y = bx$ or equivalently $y = b(ax)$.  Hence $ax$ is an atom.
\end{proof}

The notion of quasi-free module was defined in \cite{jun2024tropical} to study representation theory of finite groups over $\mathbb{R}_{\max}$ in connection to valuated matroids. Recall that a finitely generated $\mathbb{B}$-module is quasi-free if and only if it is atomic as a lattice (\cite[Lemma 3.14]{jun2024tropical}).

\begin{mydef}\cite[Definition 3.11]{jun2024tropical}
Let $R$ be an idempotent semiring.  Let $N$ be an $R$-module.  A subset $S\subseteq N$ is \emph{quasi-independent} if and only if for any $x_1, \ldots, x_n\in S$ and any $\{c_ij\mid i,j\leq n\}\in R$ satisfying
\begin{equation}
    x_i = \sum c_{ij} x_j,
\end{equation}
we have $c_{ij} = \delta_{ij}$ for all $i, j$.  A quasi-independent generating set is called a \emph{quasi-basis}.  $N$ is \emph{quasi-free} if it has a quasi-basis.
\end{mydef}

\begin{lem}\label{lemma: Semifield of fractions of valuation semiring}
Let $K$ be an idempotent semifield.  If $x\in K$, there exist $a, b\in \mathcal{O}_K$ with $a = bx$.
\end{lem}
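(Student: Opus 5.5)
The plan is to split into the two cases given by the total order on $K$. In an idempotent semifield, $x+1$ equals either $x$ or $1$, so either $x\leq 1$ or $1\leq x$.

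\textbf{Case $x\leq 1$.} Then $x\in\mathcal{O}_K$. We take $a=x$ and $b=1$. Both lie in $\mathcal{O}_K$ (since $1\leq 1$), and $a=bx$ holds trivially.

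\textbf{Case $x\not\leq 1$.} Here $x\neq 0$, since $0\leq 1$ always holds ($0+1=1$). So $x$ is a unit, and I claim $x^{-1}\leq 1$. From $x+1\neq 1$ and the totality above, we get $x+1=x$. Multiplying by $x^{-1}$ gives $1+x^{-1}=1$, which is exactly $x^{-1}\leq 1$. We then take $a=1$ and $b=x^{-1}$, so that $bx=1=a$ with both $a,b\in\mathcal{O}_K$.

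\textbf{The one point needing care.} This is whether $K$ is totally ordered. For $\mathbb{R}_{\max}$, and indeed any totally ordered group with $\mathbf 0$ adjoined, it is immediate. For a general idempotent semifield I would avoid totality altogether, and this is the version I would actually write, since it does not need the case split. Given $x\in K$, set $b=(1+x)^{-1}$ and $a=x(1+x)^{-1}$. Note $1+x\neq 0$, because $1+x\geq 1$ and $0\neq 1$; hence $1+x$ is invertible.

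We then check the three required properties:
\begin{itemize}
\item $bx=a$ holds by construction.
\item $b\leq 1$: we have $b+1=(1+x)^{-1}(1+(1+x))=(1+x)^{-1}(1+x)=1$, using idempotence $1+1=1$.
\item $a\leq 1$: we have $a+1=(1+x)^{-1}(x+1+x)=(1+x)^{-1}(1+x)=1$, using idempotence $x+x=x$.
\end{itemize}
So $a,b\in\mathcal{O}_K$, which proves the statement in full generality with no real obstacle beyond noticing this choice.
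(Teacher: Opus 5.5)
Your final argument, taking $b=(1+x)^{-1}$ and $a=bx$ and checking $b\le 1$ and $a\le 1$ via idempotence, is exactly the paper's proof, and your verifications are correct (a bit more explicit than the paper's). Do drop the opening case split, though. The claim that $x+1\in\{x,1\}$ in every idempotent semifield is false: in $\{\mathbf{0}\}\cup\mathbb{R}^2$ with componentwise max, $(1,-1)+(0,0)=(1,0)$, which is precisely why the uniform construction is the one to write.
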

\begin{proof}
Set $b = (x + 1)^{-1}$ and $a = bx$.  Then $b\leq 1$, so $b\in \mathcal{O}_K$.  Also, multiplying $x\leq x + 1$ by $b$ gives $a\leq 1$.
\end{proof}

We have seen several approaches that can be taken to define an atomic $R$-module.  The following proposition relates them.

\begin{pro}\label{proposition: approaches to atomicity}
Let $R$ be an idempotent semiring and $N$ be an $R$-module.
\begin{enumerate}
    \item 
    If $N$ is torsion-free then every weak atom is either $0$ or an atom.  In particular a torsion-free module is atomic if and only if every element is a sum of weak atoms.
 \item 
  Suppose $R = \mathcal{O}_K$ for a totally ordered idempotent semifield $K$ or $R = K$ for an idempotent semifield $K$, and $N$ is finitely generated. If $N$ is atomic, then $N$ is quasi-free. In this case a quasi-basis is the same as a minimal set of atoms that generates $N$. 
    \item  
    Suppose $R = \mathcal{O}_K$ or $R = K$ for an idempotent semifield $K$, and $N$ is finitely generated.  If $N$ is quasi-free then $N$ is atomic.  
   
\end{enumerate}
\end{pro}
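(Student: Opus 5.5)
For part (1), I will argue directly from the definitions. Let $x\neq 0$ be a weak atom, so $\phi_{0,x}$ maps onto $[0,x]$. If $\phi_{0,x}(a)=\phi_{0,x}(b)$, i.e.\ $ax=bx$, then torsion-freeness together with $x\neq 0$ forces $a=b$. Hence $\phi_{0,x}$ is a bijection and $x$ is an atom. For the ``in particular'' clause, atoms are weak atoms, and conversely zero summands can be dropped from any sum of weak atoms.

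For part (2), I start from a finite generating set. Each generator is a finite sum of atoms, so $N$ is generated by finitely many atoms, and I pass to a minimal such generating set $S=\{x_1,\dots,x_n\}$. To see that $S$ is quasi-independent, suppose $x_i=\sum_j c_{ij}x_j$. Each nonzero term satisfies $c_{ij}x_j\le x_i$, so, $x_i$ being an atom, $c_{ij}x_j=b_jx_i$ for a unique $b_j\le 1$. Summing gives $x_i=(\sum_j b_j)x_i$, and injectivity of $\phi_{0,x_i}$ gives $\sum_j b_j=1$. This is where I use that $K$ is totally ordered (in the case $R=\mathcal O_K$), so that some single $b_{j}=1$. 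Then $c_{ij}x_j=x_i$. If $j\neq i$, this exhibits $x_i$ in the submodule generated by $S\setminus\{x_i\}$, contradicting minimality.

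It remains to handle the diagonal and the off-diagonal terms. For the diagonal, $c_{ii}x_i=x_i$, and I will get $c_{ii}=1$ from injectivity of $\phi_{0,x_i}$, after rescaling by $c_{ii}^{-1}$ when $R=K$ and $c_{ii}>1$. For the off-diagonal terms, if $c_{ij}\ne0$ with $j\ne i$, then Lemma \ref{lemma: multiple of atom} makes $c_{ij}x_j$ an atom lying below $x_i$, so $c_{ij}x_j=bx_i$ for some $b\neq 0$. When $R=K$, inverting $c_{ij}$ expresses $x_j$ as a multiple of $x_i$, contradicting minimality. When $R=\mathcal O_K$ this is the main obstacle, since $c_{ij}$ cannot be inverted. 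My first attempt will combine $c_{ij}x_j=bx_i$ with the analogous relation obtained from expanding $x_j$, or use Lemma \ref{lemma: Semifield of fractions of valuation semiring} to clear denominators, so that $x_i$ and $x_j$ become comparable up to a scalar and minimality is contradicted. The claim that quasi-bases are exactly the minimal generating sets of atoms will follow by combining this direction with part (3).

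For part (3), let $\{x_i\}$ be a finite quasi-basis. Since the $x_i$ generate, it suffices to show that each $x_i$ is an atom. Given $y\le x_i$, write $y=\sum_j a_jx_j$. Then $x_i=x_i+y=(1+a_i)x_i+\sum_{j\ne i}a_jx_j$, and quasi-independence forces $a_j=0$ for $j\ne i$ and $1+a_i=1$. Hence $y=a_ix_i$ with $a_i\le1$, which gives surjectivity of $\phi_{0,x_i}$. For injectivity, suppose $ax_i=bx_i$ with $a,b\le 1$ and $b\ne0$. When $R=K$, we get $x_i=b^{-1}ax_i$, and quasi-independence yields $a=b$. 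When $R=\mathcal O_K$, I expect to need a trick: if $a<b$, I would add $x_i$ to a suitable multiple of the relation so as to produce a nontrivial relation $x_i=cx_i$ with $c\ne1$, and Lemma \ref{lemma: Semifield of fractions of valuation semiring} is the intended tool for this.
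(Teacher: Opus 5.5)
The two steps you leave open for $R=\mathcal{O}_K$ are genuine gaps: killing the off-diagonal coefficients in (2), and injectivity of $\phi_{0,x_i}$ in (3). No trick will fill them, because those cases of the statement are false.

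Fix a totally ordered $K$ (e.g.\ $\mathbb{R}_{\max}$) and $t\in\mathcal{O}_K$ with $0<t<1$, so $t^2<t$.

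For (3), let $N=\mathcal{O}_K/\!\sim$, where $a\sim b$ iff $a=b$ or $a,b$ are both nonzero and $\le t$. This is a congruence, and $\{[1]\}$ is a quasi-basis because the class of $1$ is a singleton. But $ty=t^2y$ for every $y\in N$, so no $\phi_{0,y}$ is injective. Hence $N$ has no atoms and is not atomic.

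For (2), let $N$ be $\mathcal{O}_K^2$ modulo the congruence generated by $te_1=te_2$. Explicitly, $(a,b)\sim(a',b')$ iff $a+b=a'+b'$ and in each coordinate the two entries are equal or both $\le t$.
\begin{enumerate}
\item For $x_1=[e_1]$ one checks $[0,x_1]=\{cx_1\mid c\in\mathcal{O}_K\}$, with $c\mapsto cx_1$ injective (read off $a+b$). So $x_1$, and likewise $x_2=[e_2]$, are atoms, and $N$ is atomic by Lemma \ref{lemma: multiple of atom}.
\item Every generating set contains $x_1$ and $x_2$. In $x_1=\sum_k c_ks_k$ the first entry $1>t$ must be attained exactly, forcing some $c_k=1$ and $s_k=x_1$.
\item Yet $x_1=x_1+tx_2$. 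So $N$ is not quasi-free, and the minimal generating set of atoms $\{x_1,x_2\}$ is not a quasi-basis.
\end{enumerate}
Note that $tx_2=tx_1$ although $x_1,x_2$ are not multiples of one another. This is exactly the comparability your plan hoped to extract.

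The paper's proof passes over the same two points. In (2), its ``proceed similarly'' cancels $c_{ij}$ (or $a$) from $c_{ij}x_j=ax_i$, which is not legitimate in an $\mathcal{O}_K$-module. In (3), ruling out relations $x_k=ax_k$ with $a\neq 1$ gives injectivity of $\phi_{0,x_k}$ only when scalars can be inverted.

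For $R=K$, by contrast, your argument is the paper's: the same injectivity argument in (1); inverting $c_{ij}$ in (2) to make $x_j$ a multiple of $x_i$; surjectivity from $x_i=x_i+y$ and injectivity by dividing by $b$ in (3). Two small repairs are needed because $K$ is not assumed totally ordered there:
\begin{enumerate}
\item The step ``some single $b_j=1$'' is unavailable, but it is unnecessary once the off-diagonal terms are killed.
\item $c_{ii}$ may be incomparable with $1$. Instead of rescaling only when $c_{ii}>1$, write $c_{ii}=\alpha^{-1}\beta$ with $\alpha,\beta\le 1$ via Lemma \ref{lemma: Semifield of fractions of valuation semiring}, and apply injectivity of $\phi_{0,x_i}$ to $\alpha x_i=\beta x_i$.
\end{enumerate}
So parts (2) and (3) should be proved, and stated, only for $R=K$. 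The rest of the paper uses only part (1).
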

\begin{proof}
(1): The torsion free condition implies $\phi_{0, x}$ is injective for all $x\neq 0$.  Thus it is surjective if and only if it is bijective, so $x\neq 0$ is a weak atom if and only if $x$ is an atom.  This implies the first claim.

(2): Suppose $N$ is atomic.  Since $N$ is finitely generated, by writing each generator as a sum of atoms, $N$ is generated by finitely many atoms.  Fix a minimal generating set of atoms $x_1, \ldots, x_n$.  Choose elements $c_{ij}\in R$ such that $x_i = \sum_j c_{ij}x_j$ for all $j$.  Because $x_i$ is an atom, each term is a multiple of $x_i$.  If $c_{ij}\neq 0$ for $j\neq i$ there is some $a$ such that $c_{ij}x_j = ax_i$. If $R$ is a semifield, $c_{ij}^{-1}a\in R$ so $x_j = c_{ij}^{-1}ax_i$.  Then $x_j$  can be dropped from the generating set, contradicting minimality.  On the other hand, if $R = \mathcal{O}_K$, then either $c_{ij}^{-1}a$ or $a^{-1}c_{ij}$ belongs to $R$ depending on whether $c_{ij}\leq a$; either, we may proceed similarly to the semifield case. So for both the semifield case and the totally ordered case, $c_{ij} = 0$ for $i\neq j$ and $x_i = c_{ii} x_i$.  Using Lemma \ref{lemma: Semifield of fractions of valuation semiring}, we may write $c_{ii} = \alpha^{-1}\beta$ with $\alpha, \beta \leq 1$ (we choose $\alpha = 1$ if $R = \mathcal{O}_K$).  From $\alpha x_i = \beta x_i$, and the fact that $x_i$ is an atom, we conclude that $\alpha = \beta$ and hence $c_{ii} = 1$.  Thus the $x_i$ form a quasi-basis and $N$ is quasi-free.

(3): Let $N$ be quasi-free (in addition to being finitely generated).  Fix a quasi-basis $S$.  First we observe that $S$ is a minimal generating set (if $x\in S$ is spanned by other elements of $S$, add $x = x$ to both sides of this relation and apply quasi-independence).  In particular, $S$ is finite because we can write any other finite generating set in terms of finitely many elements of $S$.  Write $S = \{x_1, \ldots, x_n\}$. 

To show that each $x_k$ is an atom, we first show that each $x_k$ is a weak atom. Suppose $y\leq x_k$.  There exist coefficients $a_i$ such that $y = \sum a_i x_i$.  Then $x_k = x_k + \sum a_i x_i$.  By quasi-independence, $a_i = 0$ for $i\neq k$ and $a_k \leq 1$, establishing that $x_k$ is a weak atom.  To see that $x_k$ is an atom observe that quasi-independence prohibits relations of the form $x_k = ax_k$ with $a \neq 1$.  Thus each element of the quasi-basis is an atom, and since the quasi-basis is a minimal generating set, it is a minimal generating set of atoms.  Each element of $N$ can be written $\sum a_i x_i$ where the $x_i$ are atoms, and we may drop terms to assume $a_i \neq 0$.  By Lemma \ref{lemma: multiple of atom}, each $a_i x_i$ is an atom, showing that $N$ is atomic.  
\end{proof}

\begin{lem}\label{lemma: finitely many atoms}
Let $K$ be an idempotent semifield and $N$ be an atomic $R$-module where $R = K$ or $\mathcal{O}_K$.
\begin{enumerate}
    \item 
    $N$ is finitely generated if and only if $N$ is generated by a finite collection of atoms.
    \item 
    Suppose $R = K$.  Consider two atoms $x,y\in N$ to be equivalent if there exists $a\in K^\times$ such that $x = ay$.  Then $N$ is finitely generated if and only if the set of equivalence classes of atoms is finite.
\end{enumerate}
\end{lem}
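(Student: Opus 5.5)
For (1), the reverse direction is trivial: a finite set of atoms is in particular a finite generating set. For the forward direction, let $g_1,\ldots,g_m$ generate $N$. Atomicity lets us write each $g_j$ as a finite sum of atoms. The union of these finitely many finite families is a finite set of atoms whose span contains every $g_j$, and therefore is all of $N$. This argument works for both $R=K$ and $R=\mathcal{O}_K$.

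For (2), the definition makes sense: atoms are nonzero, and the relation $x=ay$ with $a\in K^\times$ is an equivalence relation because $K^\times$ is a group. Moreover, every nonzero multiple of an atom is an atom by Lemma \ref{lemma: multiple of atom}, so each class is an orbit of $K^\times$.

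\emph{If there are finitely many classes:} choose one representative from each class. Every atom is a unit multiple of some representative. By atomicity, every element of $N$ is a sum of atoms, so the representatives generate $N$.

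\emph{If $N$ is finitely generated:} by (1), $N$ is generated by finitely many atoms $x_1,\ldots,x_n$. Let $y$ be any atom. Write $y=\sum_i a_ix_i$, discard the zero terms, and note $y\neq 0$, so some term survives. Each term satisfies $a_ix_i\leq y$. Since $y$ is an atom, $\phi_{0,y}$ is a bijection onto $[0,y]$, so $a_ix_i=b_iy$ for a unique $b_i\in\mathcal{O}_K$. Also $y=\sum_i b_iy=(\sum_i b_i)y$, so injectivity of $\phi_{0,y}$ gives $\sum_i b_i=1$. The main obstacle is getting from here to "$y$ is equivalent to some $x_i$." I would argue as follows.

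\begin{enumerate}
\item Pick an index $i$ with $a_i\neq 0$. By Lemma \ref{lemma: multiple of atom}, $a_ix_i$ is an atom. Since $a_ix_i=b_iy\neq 0$, we have $b_i\neq 0$, and $b_i$ is a unit of $K$ because $K$ is a semifield.
\item Then $y=b_i^{-1}a_ix_i$, so $y$ is equivalent to $x_i$.
\end{enumerate}

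Note that this step does not actually need $\sum_i b_i=1$. Nonvanishing of a single term suffices, and that is guaranteed by $y\neq 0$. Hence every atom lies in the class of some $x_i$, so there are at most $n$ classes.
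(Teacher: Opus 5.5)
Your proof is correct and follows essentially the same route as the paper. In (1) you expand each generator into a finite sum of atoms, and in (2) you take a nonzero term $a_ix_i\le y$ in an expansion of an atom $y$, use the atom property of $y$ to write $a_ix_i=b_iy$ with $b_i\neq 0$ (hence a unit), and invert. The side remark $\sum_i b_i=1$ is, as you note yourself, not needed.
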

\begin{proof}
 (1): One direction is trivial, so suppose $N$ is finitely generated, and let $x_1, \ldots, x_n$ be generators.  Write each as a sum of atoms $x_i = y_{i1} + \ldots + y_{im_i}$.  Then any element can be written in the form
    \begin{equation}
        \sum_{i\leq n} c_i x_i = \sum_{i\leq n}\sum_{j\leq m_i} c_i y_{ij}.
    \end{equation}
    Hence the $y_{ij}$ generate $N$, so $N$ is generated by finitely many atoms.

(2): For the second claim, let $R = K$ and suppose the set of equivalence classes of atoms is finite.  Let $x_1, \ldots x_n$ be representatives of each class, so every atom has the form $ax_i$ for some $i$ and some $a\in K^\times$.  Every element is a sum of atoms, so can be written in the form $\sum_k a_k x_{i_k}$.  Hence $x_1, \ldots, x_n$ form a generating set.

Conversely, suppose $N$ is finitely generated, and choose a finite set $\{x_1, \ldots, x_n\}$ of atoms which generate $N$.  Let $y$ be an arbitrary atom.  Write $y = \sum_i c_i x_i$.  Since $y\neq 0$, there is $j$ with $c_jx_j\neq 0$.  Also $c_j x_j \leq y$ so there is some $a_j\in K^\times$ such that $c_j x_j = a_j y$.  Hence $y = a_j^{-1}c_j x_j$.  Hence every atom is equivalent to some $x_j$.
\end{proof}

To define geometric $\mathbb{R}_{\max}$-modules, we need a strengthening of the torsion-free condition.

\begin{rmk}\label{remark: wrong defintion of strongly torsionfree}
A seemingly natural condition to impose on a module is that if $x\leq y$ and $x\neq y$ then $\phi_{x, y}$ is injective, or equivalently that if $x\leq y$, $a, b\leq 1$, and $x + ay = x + by$ then either $x = y$ or $a = b$.  This is too strong; in fact if $K\neq\mathbb{B}$ then the only $K$-module satisfying this property is the zero module, as can be seen by choosing any $x\neq 0$, $b < 1$, $a = 0$ and $y = b^{-1}x$.
\end{rmk}

We instead consider a weakening of the property described in the above remark, which avoids this sort of counterexample.  Since this weakening no longer implies the torsion-free condition (in the case of $\mathcal{O}_K$-modules), we augment it by a weakening of the torsion-free condition.

\begin{mydef}\label{definition: strongly torsionfree}
Let $R = K$ or $\mathcal{O}_K$ for an idempotent semifield $K$.  Let $N$ be an $R$-module.  We say $N$ is \emph{strongly torsion-free} if 
\begin{enumerate}
    \item 
    If $x\in N\backslash\{0\}$ and $a\in R\backslash\{0\}$ then $ax\neq 0$.
    \item 
    Suppose $x, y\in N$ with $x\leq y$.  Let $a, b \in \mathcal{O}_K$.  Suppose $x + ay = x + by$.  Then either $a = b$ or $x = x + ay = x + by$.
\end{enumerate}
\end{mydef}
Note that every $\mathbb{B}$-module is strongly torsion-free. This explains why this condition does not appear in the definition of a geometric lattice.

For an $\mathcal{O}_K$-module, the strong property described in Remark \ref{remark: wrong defintion of strongly torsionfree} essentially says $N / \langle x\rangle$ is always torsion-free.  To instead describe the strongly torsion-free condition in terms of quotients, it is helpful to decompose the torsion-free condition into two conditions.

\begin{lem}\label{lemma: decomposition of torsion-free}
Let $R = K$ or $\mathcal{O}_K$ for an idempotent semifield $K$. Let $N$ be an $R$-module. Then $N$ is torsion-free if and only if the following hold:
\begin{enumerate}
    \item
If $x\in N\backslash\{0\}$ and $a\in R\backslash\{0\}$ then $ax\neq 0$.
    \item 
Let $x\in N$ and $a,b\in R$ and suppose $ax = bx$.  Then $a = b$ or $ax = bx = 0$.
\end{enumerate}
Additionally, Condition (2) is equivalent to the statement that if $x\in N$, $a, b\in\mathcal{O}_K$ and $ax=bx$ then $a=b$ or $ax=bx=0$.  
\end{lem}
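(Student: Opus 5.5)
The plan is to verify the two directions of the equivalence and then the final reformulation of Condition (2). For the forward direction, assume $N$ is torsion-free. For Condition (1), suppose $ax = 0$ with $a\neq 0$. Then $ax = 0 = 0\cdot x$, so torsion-freeness gives $a = 0$ or $x = 0$. Since $a\neq 0$, we get $x = 0$. For Condition (2), suppose $ax = bx$. Torsion-freeness gives $a = b$ or $x = 0$, and in the latter case $ax = bx = 0$.

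For the converse, assume (1) and (2), and suppose $ax = bx$ with $a\neq b$. By (2), $ax = bx = 0$. Since $a\neq b$, at least one of them, say $a$, is nonzero. Then (1) applied to $ax = 0$ forces $x = 0$. This is exactly torsion-freeness.

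For the last claim, one direction is immediate because $\mathcal{O}_K\subseteq R$ in both cases $R = K$ and $R = \mathcal{O}_K$. When $R = \mathcal{O}_K$ the two statements coincide, so the only real work is the case $R = K$: we must show that the $\mathcal{O}_K$-version implies the $K$-version. Take $a, b\in K$ with $ax = bx$, and set $c = (a+b)^{-1}$ when $a + b\neq 0$. If $a + b = 0$, then in an idempotent semifield $a, b\leq a+b = 0$ gives $a = b = 0$ and we are done.

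Otherwise $ca, cb\leq c(a+b) = 1$, so $ca, cb\in\mathcal{O}_K$, and $(ca)x = (cb)x$. The $\mathcal{O}_K$-version then gives either $ca = cb$, hence $a = b$ since $c$ is a unit, or $cax = cbx = 0$. In the second case we multiply by $c^{-1}$ to get $ax = bx = 0$. This is essentially the idea of Lemma \ref{lemma: Semifield of fractions of valuation semiring}, clearing denominators into $\mathcal{O}_K$.

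The main (mild) obstacle is this scaling step in the $R = K$ case. Everything else is direct unwinding of definitions.
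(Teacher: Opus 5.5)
Your proposal is correct and follows essentially the same route as the paper. Both directions of the equivalence are the same unwinding of the definitions. For the final claim with $R = K$, the paper likewise rescales by $(a\vee b)^{-1}$ to move the coefficients into $\mathcal{O}_K$, and your separate treatment of the case $a+b=0$, where the paper divides without comment, is a small bit of extra care.
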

\begin{proof}
Suppose $N$ satisfies (1) and (2), and suppose $ax = bx$.  If $ax = bx = 0$ then $x = 0$ or $a = 0 = b$.  Otherwise $a = b$.  Thus $x = 0 $ or $a = b$, so $M$ is torsion-free.

Conversely, suppose $N$ is torsion-free.  If $ax = 0 = 0x$ then $x = 0$ or $a = 0$, which proves (1).  Now suppose $ax = bx$.  Then $a = b$ or $x = 0$ and hence $ax = bx = 0$.  This proves (2).

For the last claim, there is nothing to prove if $R = \mathcal{O}_K$, so assume $R = K$. One direction is trivial since $\mathcal{O}_K\subseteq K$. Therefore, suppose that if $x\in N$ and $a, b\in\mathcal{O}_K$ with $ax=bx$ then $a=b$.  Let $x\in N$ and $a, b\in K$ satisfy $ax = bx$.  Divide by $a\vee b$ to get $\frac{a}{a\vee b}x = \frac{b}{a\vee b}x$.  Since the coefficients now lie in $\mathcal{O}_K$, we either have $\frac{a}{a\vee b} = \frac{b}{a\vee b}$ (in which case $a = b$) or $\frac{a}{a\vee b}x = \frac{b}{a\vee b}x = 0$ (in which case $ax=bx=0$).
\end{proof}

\begin{mydef}
Let $R$ be an idempotent semiring. 
\begin{enumerate}
    \item 
An $R$-module $L$ is \emph{subtractive} if $a+b, b \in L$ implies $a \in L$ for any $a,b \in L$.
\item 
For an $R$-module $N$, by a finitely generated subtractive submodule $L$, we mean the smallest subtractive submodule of $N$ containing a given finite set.\footnote{This doesn't imply that $L$ is finitely generated as a submodule.}
\end{enumerate}
\end{mydef}

\begin{lem}\label{lem: subtractive}
Let $K$ be an idempotent semifield and $N$ be an $\mathcal{O}_K$-module. The finitely generated subtractive submodules of $N$ are precisely the intervals $[0,x]$ for $x\in N$.    
\end{lem}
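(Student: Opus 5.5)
The plan is to show two inclusions: every interval $[0,x]$ is a finitely generated subtractive submodule, and conversely every finitely generated subtractive submodule is of this form. Throughout we use that $N$ is an $\mathcal{O}_K$-module, so every scalar $a$ satisfies $a\leq 1$.

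\textbf{Intervals are subtractive submodules.} First I check that $[0,x]$ is a submodule.
\begin{itemize}
\item It contains $0$.
\item It is closed under addition: if $y,z\leq x$ then $y+z\leq x+x=x$.
\item It is closed under scalar multiplication: for $a\in\mathcal{O}_K$ we have $a\leq 1$, so $ay+y=(a+1)y=y$, i.e. $ay\leq y\leq x$.
\end{itemize}
Next, $[0,x]$ is subtractive. If $a+b\in[0,x]$, then $a\leq a+b\leq x$, so $a\in[0,x]$. This is where idempotency is used: the order is compatible with addition.

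\textbf{Intervals are finitely generated.} I claim $[0,x]$ is the smallest subtractive submodule containing $\{x\}$. Let $L$ be any subtractive submodule containing $x$, and let $y\leq x$. Then $y+x=x\in L$, and $x\in L$, so subtractivity gives $y\in L$. Hence $[0,x]\subseteq L$, and $[0,x]$ is the subtractive submodule generated by the single element $x$.

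\textbf{The converse.} Let $L$ be the smallest subtractive submodule containing a finite set $\{x_1,\dots,x_n\}$, and set $x=x_1+\dots+x_n$ (with $x=0$ if $n=0$). The key point is that $L=[0,x]$.
\begin{itemize}
\item $[0,x]$ is a subtractive submodule containing each $x_i$, since $x_i\leq x$. By minimality, $L\subseteq[0,x]$.
\item $x\in L$ because $L$ is a submodule containing the $x_i$. The argument above then gives $[0,x]\subseteq L$.
\end{itemize}
So $L=[0,x]$.

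The only delicate point is that closure under scalars requires $a\leq 1$, which is exactly why the statement is phrased for $\mathcal{O}_K$-modules rather than $K$-modules. Otherwise the argument is routine.
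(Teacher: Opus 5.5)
Your proof is correct and follows the same route as the paper. Intervals $[0,x]$ are subtractive submodules: they are closed under addition by idempotence and under scalars since every element of $\mathcal{O}_K$ is at most $1$. Any subtractive submodule containing $x_1,\dots,x_n$ contains $[0,x_1+\cdots+x_n]$. You also make explicit two points the paper leaves implicit: that intervals are subtractive, and that subtractivity forces $[0,x]\subseteq L$ once $x\in L$.
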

\begin{proof}
These intervals are closed under addition by idempotence, and are closed under scalar multiplication since all elements of $\mathcal{O}_K$ are at most $1$.  Furthermore for any $x_1, \ldots, x_n\in N$ and any subtractive submodule $L\subseteq N$ containing the $x_i$, we have $x_i\in [0, x_1 + \ldots + x_n] \subseteq L$ for all $i$.  This proves the lemma.  
\end{proof}

\begin{lem}
Let $K$ be an idempotent semifield and $N$ be an $\mathcal{O}_K$-module.  The following are equivalent
\begin{enumerate}
\item 
$N$ is strongly torsion-free.
\item 
$N$ satisfies Condition (1) of Lemma \ref{lemma: decomposition of torsion-free} and for any subtractive submodule $L\subseteq N$, $N / L$ satisfies Condition (2) of Lemma \ref{lemma: decomposition of torsion-free}.
\item 
$N$ satisfies Condition (1) of Lemma \ref{lemma: decomposition of torsion-free}  and for any finitely generated subtractive submodule $L\subseteq N$, $N / L$ satisfies Condition (2) of Lemma \ref{lemma: decomposition of torsion-free}.
\end{enumerate}
\end{lem}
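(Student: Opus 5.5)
The plan is to make the quotient $N/L$ explicit for a subtractive submodule $L$, and then observe that Condition (2) for $N/L$ is a direct reformulation of Condition (2) of Definition \ref{definition: strongly torsionfree}. Condition (1) of Lemma \ref{lemma: decomposition of torsion-free} coincides with Condition (1) of Definition \ref{definition: strongly torsionfree}, so only the second conditions need comparing.

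\textbf{Step 1: describe the quotient.} A subtractive submodule $L$ is downward closed: if $u\leq l\in L$ then $u+l=l$, so $u\in L$. It is also closed under addition, hence directed. I claim the congruence defining $N/L$ is
\[
u\sim v \iff u+l=v+l \text{ for some } l\in L.
\]
This relation is reflexive and symmetric. It is transitive because we may replace $l,l'$ by $l+l'$. It is compatible with addition and with scalars $c\in\mathcal{O}_K$, since $cl\leq l$ gives $cu+l=cu+cl+l=c(u+l)+l$. It identifies $L$ with $0$, because $l\sim 0$ via $l$ itself. Any congruence collapsing $L$ to $0$ contains $\sim$, since $u\equiv u+l=v+l\equiv v$. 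Finally, $[u]=0$ iff $u+l=l$ for some $l\in L$, iff $u\leq l$, iff $u\in L$, using downward closure.

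\textbf{Step 2: translate the condition.} By Step 1 and the last part of Lemma \ref{lemma: decomposition of torsion-free}, Condition (2) for $N/L$ says the following: whenever $y\in N$, $a,b\in\mathcal{O}_K$ and $ay+l=by+l$ for some $l\in L$, we have $a=b$ or $ay\in L$ (equivalently $by\in L$).

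\textbf{(1)$\Rightarrow$(2).} Suppose $ay+l=by+l$ with $l\in L$. Set $x=l$ and $y'=y+l$, so $x\leq y'$. Since $al\leq l$, we get $x+ay'=l+ay$, and likewise $x+by'=l+by$, so $x+ay'=x+by'$. Strong torsion-freeness then gives $a=b$, or $l=l+ay$. In the latter case $ay\leq l$, hence $ay\in L$ by downward closure.

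\textbf{(2)$\Rightarrow$(3).} This is trivial.

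\textbf{(3)$\Rightarrow$(1).} Let $x\leq y$ with $x+ay=x+by$. Take $L=[0,x]$, which is a finitely generated subtractive submodule by Lemma \ref{lem: subtractive}. Then $a[y]=b[y]$ in $N/L$, witnessed by $l=x$. Hence $a=b$, or $ay\in[0,x]$. In the latter case $x+ay=x$, and then $x+by=x$ too, which is the required alternative.

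The only real obstacle is Step 1. One must be careful that the quotient of a semimodule by a subtractive submodule is the Bourne-type congruence above, and that $[u]=0$ forces $u\in L$. This is exactly where subtractivity (via downward closure) is used. Everything else is bookkeeping with the inequality $cl\leq l$ for $c\leq 1$.
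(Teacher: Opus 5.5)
Your proof is correct, but it routes the implications differently from the paper.

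The paper's cycle runs as follows:
\begin{enumerate}
\item It proves (1)$\Rightarrow$(3) by reducing to $L=[0,x]$ via Lemma~\ref{lem: subtractive}.
\item It proves (3)$\Rightarrow$(1) essentially as you do.
\item It proves (3)$\Rightarrow$(2) by a compactness argument. It writes an arbitrary subtractive $L$ as a filtered union of finitely generated subtractive submodules $L_i$. It then pushes the witnesses $u,v\in L$ of $ay+u=by+v$ into a single $L_i$ and applies Condition (2) in $N/L_i$.
\end{enumerate}

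You instead prove (1)$\Rightarrow$(2) directly for arbitrary $L$. You absorb the witness into $y$ (taking $x=l$ and $y'=y+l$, and using $al\le l$), so no filtered-union step is needed.

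Your Step~1 also makes explicit two facts the paper uses tacitly. The first is the form of the congruence defining $N/L$. The second is that $[u]=0$ iff $u\in L$; the paper needs this in its (3)$\Rightarrow$(1) to pass from $a\bar y=0$ to $x=x+ay$. In addition, your substitution $y'=y+l$ secures the hypothesis $x\le y$ of Definition~\ref{definition: strongly torsionfree}. The paper's (1)$\Rightarrow$(3) leaves this implicit when it applies $\phi_{x,y}$ to an arbitrary representative $y$.

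What the paper's route buys is transparency in the principal case: strong torsion-freeness is literally Condition (2) for the quotients $N/[0,x]$. The extension to all subtractive $L$ is then handled by a generic finitary argument that does not depend on the specific form of the condition.
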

\begin{proof} 
To show (1) implies (3), suppose $N$ is strongly torsion-free. By Lemma \ref{lem: subtractive}, we may assume that $L=[0,x]$ for some $x \in N$. By definition $N$ satisfies Condition (1) of Lemma \ref{lemma: decomposition of torsion-free}.  We aim to show $N / [0, x]$ satisfies Condition (2) of Lemma \ref{lemma: decomposition of torsion-free}.  Suppose $a\bar{y} = b\bar{y}$ for $a, b\in \mathcal{O}_K$ and $\bar{y}\in N/[0,x]$.  Adding $\bar{x}$ to both sides, we obtain $\overline{\phi_{x, y}(a)} = \overline{\phi_{x, y}(b)}$.  Thus there exist $u, v\in [0, x]$ such that
\begin{equation}
\phi_{x, y}(a) = u + \phi_{x, y}(a) = v + \phi_{x, y}(b) = \phi_{x, y}(b)
\end{equation}
where the first equality uses $\phi_{x, y}(a) \geq x \geq u$.  Since $N$ is strongly torsion-free, this implies $a = b$ unless $x = x + ay = x + by$, in which case $a\bar{y} = b\bar{y} = 0$.

Conversely, suppose $N / [0, x]$ satisfies Condition (2) of Lemma \ref{lemma: decomposition of torsion-free} for all $x\in N$ and $N$ satisfies Condition (1) of Lemma \ref{lemma: decomposition of torsion-free}.  Let $x\leq y$ and let $a,b\in\mathcal{O}_K$ be such that $x + ay = x + by$.  In $N / [0, x]$, we have $a\bar{y} = b\bar{y}$. By Condition (2) of Lemma \ref{lemma: decomposition of torsion-free}, either $a\bar{y} = b\bar{y} = 0$ (in which case $x = x + ay = x + by$) or $a = b$.  In combination with Condition (1) of Lemma \ref{lemma: decomposition of torsion-free}, this says $N$ is strongly torsion-free

(2) trivially implies (3), so it remains to show (3) implies (2).  Let $L\subseteq N$ be a subtractive submodule.  Write it as a filtered union $\bigcup L_i$ of finitely generated subtractive submodules. Let $a,b\in \mathcal{O}_K$ be such that $a\bar{y} = b\bar{y}$, i.e., there exist $u,v\in L$ with $ay + u = by + v$.  There is some $i$ such that $u, v\in L_i$, and reducing modulo this $L_i$ yields $a\bar{y}_{L_i} = b\bar{y}_{L_i}$.  Since $N/ L_i$ satisfies Condition (2) of Lemma \ref{lemma: decomposition of torsion-free}, either $a = b$ or $a\bar{y}_{L_i} = b\bar{y}_{L_i} = 0$, in which case $a\bar{y} = b\bar{y} = 0$.  Then $N/L$ satisfies Condition (2) of Lemma \ref{lemma: decomposition of torsion-free}.
\end{proof}

In particular, by taking $L = 0$, we see that strongly torsion-free $\mathcal{O}_K$-modules are torsion-free, which justifies the terminology.  It is not hard to check that strongly torsion-free $K$-modules are also torsion-free - one approach is the following lemma.

\begin{lem}\label{lemma: torision-free lemma}
Let $N$ be a $K$-module where $K$ is an idempotent semifield.  Then
\begin{enumerate}
\item 
$N$ is a torsion-free $K$-module if and only if $N$ is a torsion-free $\mathcal{O}_K$-module.
\item 
$N$ is a strongly torsion-free $K$-module if and only if $N$ is a strongly torsion-free $\mathcal{O}_K$-module.
\end{enumerate}
\end{lem}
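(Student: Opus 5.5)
The plan is to reduce both parts to the case of coefficients in $\mathcal{O}_K$, since every condition involved can be checked on such coefficients after rescaling by units of $K$. Here $N$ is a $K$-module, and we view it as an $\mathcal{O}_K$-module by restriction of scalars along $\mathcal{O}_K\subseteq K$. The partial order on $N$ depends only on addition, so it is the same for both structures.

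For (1), I would use Lemma \ref{lemma: decomposition of torsion-free}, which splits torsion-freeness into Conditions (1) and (2).

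For Condition (1), one direction is immediate: if $ax\neq 0$ for all nonzero $a\in K$ and $x\neq 0$, then in particular this holds for nonzero $a\in\mathcal{O}_K$. For the converse, take nonzero $a\in K$ and $x\neq 0$. Since $K$ is a semifield, $a$ is a unit. Hence $ax=0$ would give $x=a^{-1}(ax)=0$, a contradiction. So Condition (1) actually holds automatically for any $K$-module.

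For Condition (2), the final assertion of Lemma \ref{lemma: decomposition of torsion-free} says that over $K$ it suffices to check Condition (2) for $a,b\in\mathcal{O}_K$. That restricted condition is precisely Condition (2) for the $\mathcal{O}_K$-module structure. Combining the two conditions gives (1).

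For (2), I would compare Definition \ref{definition: strongly torsionfree} for $R=K$ and for $R=\mathcal{O}_K$. The second clause of that definition already quantifies only over $a,b\in\mathcal{O}_K$, and the hypothesis $x\leq y$ does not depend on the ring. So the second clause is literally the same statement for both structures. The first clause is Condition (1) of Lemma \ref{lemma: decomposition of torsion-free}, which we just showed is equivalent for the two structures (indeed automatic for $K$-modules). Hence the two notions of strong torsion-freeness coincide.

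The main point to watch is just that the order on $N$, and therefore the hypothesis $x\le y$ and the maps $\phi_{x,y}$, are unchanged under restriction of scalars. Once that is noted, there is no substantive obstacle; the only nontrivial input is the rescaling by $a\vee b$ already carried out in Lemma \ref{lemma: decomposition of torsion-free}.
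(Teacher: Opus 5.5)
Your proof is correct and follows the paper's structure: you split via Lemma~\ref{lemma: decomposition of torsion-free}, note that Condition (2) and the second clause of Definition~\ref{definition: strongly torsionfree} only involve $\mathcal{O}_K$-scalars, and handle Condition (1) separately. The only difference is in that last step: the paper passes from $\mathcal{O}_K$ to $K$ using the meet $a\wedge 1=(a^{-1}\vee 1)^{-1}\in\mathcal{O}_K\setminus\{0\}$ together with $(a\wedge 1)x\le ax$, whereas you observe directly that Condition (1) holds automatically for any $K$-module because nonzero scalars are invertible, which is simpler (and is the same argument the paper itself uses later when checking strong torsion-freeness of $Q_M$).
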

\begin{proof}
Condition (2) of Lemma \ref{lemma: decomposition of torsion-free} or Definition \ref{definition: strongly torsionfree} only involves the $\mathcal{O}_K$-module structure of $N$.  

Thus we only need to consider their first conditions (which are identical).  For this we need the fact that $K^\times$ has greatest lower bounds; this fact is a result of the observation that inversion is order reversing so $x \wedge y = (x^{-1} \vee y^{-1})^{-1}$.

Suppose $N$ satisfies the first condition as an $\mathcal{O}_K$-module, i.e. if $ax = 0$ with $a \in \mathcal{O}_K\backslash\{0\}$ then $x = 0$.  Suppose that $ax = 0$ for some $a\in K^\times$.  Then $a \wedge 1 \in \mathcal{O}_K\backslash\{0\}$, and $(a\wedge 1)x\leq ax = 0$.  Thus $x = 0$.  The converse direction is trivial since $\mathcal{O}_K\subseteq K$.
\end{proof}

It is known that a lattice is semimodular if and only if $x \preceq y$ implies that $x + z\preceq y + z$ for all $z$. For instance, see \cite{stanley1974finite}. 

\begin{mydef}
Let $R = K$ or $R=\mathcal{O}_K$ for an idempotent semifield $K$ and let $N$ be an $R$-module.  
\begin{enumerate}
    \item 
$N$ is \emph{semimodular} if for any $x, y, z\in N$ with $x\preceq y$ we have $x + z\preceq y + z$.\footnote{Note that $x \preceq y$ means that $y$ weakly covers $x$ as in Definition \ref{definition: weakly cover}.}
    \item 
$N$ is \emph{geometric} if $N$ is strongly torsion-free, atomic, and semimodular.    
\end{enumerate}
\end{mydef}

\begin{lem}\label{lemma: covering semimodulear}
Let $M$ be a valuated matroid on a finite set $E$, and consider the covering relation $\preceq$ in the join semilattice of finitely generated flats.  If $\mathrm{cl}(S)\preceq\mathrm{cl}(T)$ (where $S, T\subseteq \hat{M}$ are finite) then either $\mathrm{cl}(S) = \mathrm{cl}(T)$ or there is some $cx\in T\backslash S$ such that $\mathrm{cl}(T) = \mathrm{cl}(S \cup \{cx\})$.
\end{lem}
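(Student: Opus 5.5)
Write $F=\mathrm{cl}(S)$ and $G=\mathrm{cl}(T)$, with $F\preceq G$ in the $\mathbb{R}_{\max}$-module $\mathcal{S}$ of finitely generated flats. Here the sum is join, $F+G=\mathrm{cl}(F\cup G)$, and the order is inclusion. Since $F\le G$ we have $F\subseteq G$, and weak covering says every finitely generated flat $H$ with $F\subseteq H\subseteq G$ has the form $F+aG=\mathrm{cl}(S\cup aT)$ for some $a\le 1$.

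The plan is to run through the elements of $T$ one at a time and use the intermediate flats $\mathrm{cl}(S\cup\{cx\})$ with $cx\in T$. Assume $F\neq G$. Because $G=\mathrm{cl}(S\cup T)$ and $T$ is finite, some $cx\in T$ satisfies $cx\notin F$. For such an element, $H_{cx}:=\mathrm{cl}(S\cup\{cx\})$ is a finitely generated flat with $F\subsetneq H_{cx}\subseteq G$. By weak covering, $H_{cx}=F+a_{cx}G$ for some $a_{cx}\le 1$.

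We want some $cx$ with $a_{cx}=1$, since then $H_{cx}=F+G=G$ and we are done. Take $a=\max_{cx\in T\setminus F}a_{cx}$. Then $F+aG$ contains every $H_{cx}$: for $cx$ with $a_{cx}\le a$ we have $F+a_{cx}G\subseteq F+aG$, and the elements of $T$ already in $F$ are trivially contained. Hence $F+aG\supseteq \mathrm{cl}(S\cup T)=G$. On the other hand $F+aG\subseteq F+G=G$, so $G=F+aG$.

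The remaining step is to show that $G=F+aG$ with $a\le 1$ forces $a=1$ whenever $F\neq G$. This is where the valuated structure is needed.
\begin{itemize}
\item If $a<1$, iterate to get $G=F+a^nG$ for all $n$.
\item Take $d x\in G\setminus F$, using that $G\setminus F$ is nonempty. Since $G$ is a closure of a finite set over finite $E$, the coefficients appearing in $G$ on each $x$ are bounded above; use Lemma \ref{lemma: circuits from valuated flats} to describe $G$ via $(S\cup T)_\downarrow$ and finitely many circuit sets.
\item By Lemma \ref{lemma: circuits from valuated flats}, $F+a^nG=\mathrm{cl}(S\cup a^nT)$ consists of $(S\cup a^nT)_\downarrow$ together with the elements completing circuit sets from it. Choose $n$ large so that $a^n$ times every coefficient of $T$ is tiny. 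Membership of $dx$ then requires either $dx\in S_\downarrow\subseteq F$, or a circuit set $C\ni dx$ with $C\setminus\{dx\}\subseteq (S\cup a^nT)_\downarrow$. There are finitely many supports, and circuits are determined up to scaling by their support. So for a fixed support the entries coming from $a^nT$ are tiny, which forces the scale of $C$ to be tiny. Once $n$ is large, this gives $dx\in F$ or gives a contradiction with $d$ being fixed.
\end{itemize}
The argument needs care when $C\setminus\{dx\}$ lies entirely in $S_\downarrow$, but then $dx\in F$. The remaining case is when $C$ meets the $a^nT$ part, and there the scaling argument gives the contradiction.

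Therefore $a=1$. The maximizing $cx\in T\setminus F\subseteq T\setminus S$ then satisfies $\mathrm{cl}(S\cup\{cx\})=G$. I expect the main obstacle to be this last limiting argument, specifically making precise that shrinking $T$ by $a^n$ eventually adds nothing outside $F$.
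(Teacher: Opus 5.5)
Your proof is correct, but it takes a different route from the paper.

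\textbf{The paper's route.} It reduces to $S\subseteq T$ and takes $T$ to be a minimal counterexample, so no element of $T\setminus S$ is redundant in $T$. It rules out two elements of $T\setminus S$ over the same $x$ by downward closure. For two elements $ax, by$ with $x\neq y$, it applies weak covering only to the intermediate flat $\mathrm{cl}(S\cup\{ax\})=\mathrm{cl}(S\cup rT)$. If $r<1$, axioms (3) and (5) of Proposition~\ref{proposition: valuated closure axioms} delete $rax$ and give $ax\in\mathrm{cl}(T\setminus\{ax\})$, contradicting minimality, so $r=1$.

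\textbf{Your route.} You apply weak covering to all the flats $\mathrm{cl}(S\cup\{cx\})$ at once and take the largest coefficient. This gives $G=F+aG$ without induction and names the witness $cx$ explicitly as a maximizer. All the content is then pushed into the claim that $F+aG=G$ with $a<1$ forces $F=G$. That claim is exactly the case $b=1$ of condition (2) of strong torsion-freeness for $Q_M$.

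\textbf{What each buys.} You prove the claim by iterating to $G=F+a^nG$ and letting $n\to\infty$. This uses the Archimedean property of $\mathbb{R}$ and circuit-level information: Lemma~\ref{lemma: circuits from valuated flats}, and the fact that a valuated circuit is determined by its support up to scaling. That fact is true and follows from circuit elimination plus minimality of supports, but the paper never states it. The paper's argument uses only axioms (3) and (5) of the closure operator. It therefore works verbatim over any totally ordered abelian group, as the introduction claims for its results. If you want to avoid the limit, you can quote the strong torsion-freeness argument in the proof that $Q_M$ is geometric. That argument also uses only axioms (3) and (5) and does not depend on this lemma.

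\textbf{Write-up points for the limit.}
\begin{enumerate}
\item List the case $dx\in(a^nT)_\downarrow$ explicitly; it is excluded once $a^n t<d$ for every coefficient $t$ of $T$.
\item Rephrase ``the scale of $C$ is tiny'' as follows. Since $dx\in C$ fixes the scaling, every entry of $C$ is a fixed real number depending only on $\mathrm{supp}(C)$, once $d$ and $x$ are fixed. So for $n$ large, uniformly over the finitely many supports, no element of $C\setminus\{dx\}$ outside $S_\downarrow$ can lie in $(a^nT)_\downarrow$. Hence $dx\in\mathrm{cl}(S)=F$.
\item Drop the remark that the coefficients of $G$ over each $x$ are bounded above. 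It fails for loops, since every $bx$ with $x$ a loop lies in $\mathrm{cl}(\emptyset)$.
\item Say ``finitely many supports'' rather than ``finitely many circuit sets''.
\end{enumerate}
Neither of the last two statements is actually used in your argument. Also, $x$ cannot be a loop, since $dx\notin F$.
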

\begin{proof}
Without loss of generality, we may suppose $S\subseteq T$ as we may replace $T$ with $S\cup T$.  Pick a minimal counterexample for which $S\subseteq T$.  

\underline{Claim}: First we claim that if $ay\in T\backslash S$ then $\mathrm{cl}(T)\neq \mathrm{cl}(T \backslash \{ay\})$.  Suppose otherwise.  If $\mathrm{cl}(S)\preceq \mathrm{cl}(T) = \mathrm{cl}(T\backslash\{ay\})$ then by minimality of $T$, either $\mathrm{cl}(S) = \mathrm{cl}(T\backslash\{ay\}) = \mathrm{cl}(T)$ or there is $cx\in (T\backslash\{ay\})\backslash S\subseteq T\backslash S$ such that $\mathrm{cl}(T) = \mathrm{cl}(T\backslash\{ay\}) = \mathrm{cl}(S\cup\{cx\})$.  In either case, we contradict that $T$ is a counterexample.  Thus the claim holds. 

In the following we prove that $T\backslash S$ does not have two distinct elements. 

\underline{Case 1}: Suppose that $ax, bx\in T\backslash S$ for some $a,b\in \mathbb{R}_{\max}$ (with $a \neq b$) and $x\in E$. We may suppose without loss of generality that $a\leq b$ and then $\mathrm{cl}(T) = \mathrm{cl}(T \backslash \{ax\})$ by Condition (3) of Proposition \ref{proposition: valuated closure axioms}. Since $ax\in T\backslash S$, this contradicts the above claim.

\underline{Case 2}: Suppose that $ax, by\in T\backslash S$ are distinct.  We may assume $x\neq y$ as we have already dealt with the $x = y$ case.  Since $\mathrm{cl}(S)\preceq\mathrm{cl}(T)$ and $\mathrm{cl}(S)\subseteq\mathrm{cl}(S\cup \{ax\}) \subseteq\mathrm{cl}(T)$, by the definition of weak cover (applied to $\text{cl}(S)\preceq \text{cl}(T)$), there is some $r\leq 1$ such that $\mathrm{cl}(S \cup \{ax\}) = \mathrm{cl}(S\cup rT)$, and in particular, $rby\in \mathrm{cl}(S \cup \{ax\})$ and $ax\in \mathrm{cl}(S \cup (rT \backslash \{rax\}) \cup \{rax\})$.  Suppose $r < 1$.  By Conditions (3) and (5) of Proposition \ref{proposition: valuated closure axioms}, $ax\in \mathrm{cl}(S \cup (rT \backslash \{rax\}))\subseteq \mathrm{cl}(S \cup T \backslash \{ax\}) = \mathrm{cl}(T\backslash\{ax\})$.   This contradicts the above claim, so we must have $r = 1$ instead.  But then $by\in \mathrm{cl}(S \cup \{ax\})\subseteq\mathrm{cl}(T\backslash\{by\})$ so we may replace $T$ with $T\backslash\{by\}$ which also contradicts the above claim. 

Hence, we cannot have two distinct elements $ax, by\in T\backslash S$.  If $T\backslash S$ has a single element, denote it by $cx\in T\backslash S$.  Then $T = S \cup \{cx\}$.  Hence $\mathrm{cl}(T) = \mathrm{cl}(S\cup \{cx\})$ contradicting that $T$ is a counterexample.  Hence $T\backslash S = \emptyset$ and $T = S$.  But in this case, $\mathrm{cl}(T) = \mathrm{cl}(S)$, again contradicting that $T$ is a counterexample.
\end{proof}

In the next proposition, we prove that $Q_M$ (as an $\mathbb{R}_{\max}$-module) is finitely generated and geometric. In doing so, we view $Q_M$ as the $\mathbb{R}_{\max}$-module of finitely generated flats of a valuated matroid $M$ through Proposition \ref{proposition: iso $Q_M$ to flats}. In particular, we use $\vee$ for addition of $Q_M$. For instance $\text{cl}(S),\text{cl}(T) \in Q_M$, and $\text{cl}(S)+\text{cl}(T) := \text{cl}(S)\vee \text{cl}(T)=\text{cl}(S\cup T)$.

\begin{pro}\label{proposition: $Q_M$ geometric}
Let $M$ be a valuated matroid on a finite set $E$.  Then the $\mathbb{R}_{\max}$-module $Q_M$ of finitely generated flats is finitely generated and geometric.
\end{pro}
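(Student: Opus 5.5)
Via Proposition \ref{proposition: iso $Q_M$ to flats}, I work throughout with $\mathcal{S}$, the join semilattice of finitely generated flats. Finite generation is immediate: every $\mathrm{cl}(\{a_1x_1,\ldots,a_nx_n\})$ equals $\bigvee_i a_i\,\mathrm{cl}(\{x_i\})$, so the finitely many elements $\mathrm{cl}(\{x\})$, $x\in E$, generate. The remaining three properties I treat separately, in the order strong torsion-freeness, atomicity, semimodularity.

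\textbf{Strong torsion-freeness.} For condition (1), if $F\neq \mathrm{cl}(\emptyset)$ then $F$ contains some $bx\notin\mathrm{cl}(\emptyset)$. For $a\in\mathbb{R}_{\max}^\times$, property (2) of Proposition \ref{proposition: valuated closure axioms} gives $aF=\mathrm{cl}(aT)$, and $abx\in aF$. This element is not in $\mathrm{cl}(\emptyset)=a\,\mathrm{cl}(\emptyset)$, so $aF\neq 0$.

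For condition (2), take $X=\mathrm{cl}(S)\leq Y=\mathrm{cl}(T)$ and $a<b\leq 1$ with $X\vee aY=X\vee bY$. I will show $X=X\vee bY$. We have $bT\subseteq \mathrm{cl}(S\cup aT)$. Pick any $bt\in bT$. Then $bt\in\mathrm{cl}(S\cup (aT\setminus\{at\})\cup\{at\})$ with $a<b$, so axiom (5) gives $bt\in \mathrm{cl}(S\cup aT\setminus\{at\})$. Since $aT\setminus\{at\}\subseteq bT_\downarrow$, I can iterate element by element: replace $at$ by nothing while keeping the remaining elements of $bT$. An induction on $|T|$ shows $bT\subseteq \mathrm{cl}(S)$, i.e.\ $X\vee bY=X$. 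At each stage one uses $\mathrm{cl}(S\cup bT'\cup aT'')$ and removes one $a$-element.

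\textbf{Atomicity.} By Proposition \ref{proposition: approaches to atomicity}(1), and since strong torsion-freeness implies torsion-freeness (Lemma \ref{lemma: torision-free lemma}), it suffices to show each $\mathrm{cl}(\{x\})$ with $x\notin\mathrm{cl}(\emptyset)$ is a weak atom. The other generators are $0$. Let $\mathrm{cl}(\emptyset)\leq G\leq \mathrm{cl}(\{x\})$ with $G=\mathrm{cl}(U)$. By Lemma \ref{lemma: circuits from valuated flats}, every $cz\in \mathrm{cl}(\{x\})$ either lies below $x$ or lies in a circuit set $C$ with $C\setminus\{cz\}\subseteq\{x\}_\downarrow$. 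So each element of $U$ not in $\mathrm{cl}(\emptyset)$ is in the closure of some $dx$ with $d\leq 1$. Axiom (4), applied with $S=\emptyset$, gives $c\leq d$ with $cx\in\mathrm{cl}(u)$ and $u\in\mathrm{cl}(cx)$, so $\mathrm{cl}(u)=\mathrm{cl}(cx)$. Taking the maximum such $c$ over $U$, we get $G=c\,\mathrm{cl}(\{x\})$, proving the weak atom property.

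\textbf{Semimodularity.} This is the main obstacle. Suppose $X\preceq Y$; we need $X\vee Z\preceq Y\vee Z$. By Lemma \ref{lemma: covering semimodulear}, either $X=Y$, which is trivial, or $Y=\mathrm{cl}(S\cup\{cx\})$ with $X=\mathrm{cl}(S)$. Then $Y\vee Z=\mathrm{cl}(S\cup W\cup\{cx\})$, so it suffices to show that for any finite $S'$ and any $cx$, $\mathrm{cl}(S')\preceq \mathrm{cl}(S'\cup\{cx\})$.

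So let $\mathrm{cl}(S')\subseteq G=\mathrm{cl}(U)\subseteq \mathrm{cl}(S'\cup\{cx\})$. For $u\in U\setminus \mathrm{cl}(S')$, apply axiom (4) with $S=S'$ to get $d_u\leq c$ with $\mathrm{cl}(S'\cup\{u\})=\mathrm{cl}(S'\cup\{d_ux\})$. Setting $d=\max_u d_u$ gives $G=\mathrm{cl}(S')\vee (dc^{-1})\,\mathrm{cl}(S'\cup\{cx\})$, the required form. Here one should double-check that the joins over different $u$ combine correctly by downward closure in $x$, which holds since $d_ux\leq dx$. If instead every $u$ lies in $\mathrm{cl}(S')$, take $d=-\infty$.
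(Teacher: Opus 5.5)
Your proposal is correct and follows essentially the paper's route: axiom (5) plus induction on $|T|$ for strong torsion-freeness, axiom (4) with empty base set to show $\mathrm{cl}(\{x\})$ is a weak atom, and Lemma \ref{lemma: covering semimodulear} followed by axiom (4) over the base $S'$ for semimodularity. Your versions differ only in presentation: you drop the minimal-counterexample framing (and with it the paper's case split on whether $\mathrm{cl}(S)=\mathrm{cl}(T')$), handle all generators of $G$ at once by taking a maximum, and get finite generation directly from the generators $\mathrm{cl}(\{x\})$ rather than via atoms. In the torsion-free induction, state explicitly that once axiom (5) gives $bt\in\mathrm{cl}(S\cup aT')$, downward closure gives $at\in\mathrm{cl}(S\cup aT')$, so $\mathrm{cl}(S\cup aT')=\mathrm{cl}(S\cup aT)\supseteq bT'$ and the hypothesis passes to $T'=T\setminus\{t\}$; for atomicity, also cite Lemma \ref{lemma: multiple of atom} to see that each nonzero $a\,\mathrm{cl}(\{x\})$ is an atom.
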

\begin{proof}
\underline{Strong torsion-freeness}: First we check that $Q_M$ is strongly torsion-free.  Condition (1) of Definition \ref{definition: strongly torsionfree} holds for all modules over a semifield: if $ax = 0$ and $a\neq 0$ then $x = a^{-1}ax = 0$.  Thus we check Condition (2) of Definition \ref{definition: strongly torsionfree}.

Let $a, b\in\mathbb{R}_{\max}$ with $a,b\leq 1$ and $\mathrm{cl}(S), \mathrm{cl}(T)$ be finitely generated flats (where $S, T\subseteq\hat{M}$ are finite) such that $\mathrm{cl}(S)\subsetneq \mathrm{cl}(T)$, and 
\begin{equation}
\mathrm{cl}(S) \vee a\mathrm{cl}(T) = \mathrm{cl}(S) \vee b\mathrm{cl}(T), \quad \textrm{that is,} \quad \mathrm{cl}(S\cup aT) = \mathrm{cl}(S \cup bT).
\end{equation}
We may assume $S\subseteq T$ because replacing $T$ with $S\cup T$ does not change $\mathrm{cl}(T)$. We wish to show that either $a = b$ or $\mathrm{cl}(S) = \mathrm{cl}(S\cup bT)$.  Assume $a \neq b$, and without loss of generality assume $a < b$.

Now assume $T$ is a minimal counterexample with the stated properties:
\begin{equation}\label{eq: eq min}
S\subseteq T, \quad \mathrm{cl}(S)\subsetneq \mathrm{cl}(T), \quad \mathrm{cl}(S) \neq \mathrm{cl}(S \cup bT), \quad \textrm{and} \quad \mathrm{cl}(S) \vee a\mathrm{cl}(T) = \mathrm{cl}(S) \vee b\mathrm{cl}(T).
\end{equation}
Let $cx\in T \backslash\mathrm{cl}(S)$ (which exists as otherwise $\mathrm{cl}(T)\subseteq\mathrm{cl}(S)$).

Observe
\begin{equation}
    bcx \in \mathrm{cl}(S \cup bT) = \mathrm{cl}(S \cup aT) = \mathrm{cl}(S \cup aT' \cup \{acx\}),
\end{equation}
where $T'=T\backslash \{cx\}$. By Conditions (3) and (5) of Proposition \ref{proposition: valuated closure axioms}, we have
\begin{equation}
 bcx\in \mathrm{cl}(S \cup aT') \subseteq \mathrm{cl}(S\cup bT').  \end{equation}
By multiplying by $b^{-1}a\leq 1$, we get 
\begin{equation}
    acx \in \mathrm{cl}(b^{-1}a(S\cup aT')) \subseteq \mathrm{cl}(S \cup aT').
\end{equation}
Thus $S \cup aT \subseteq \mathrm{cl}(S\cup aT')$, implying that $\mathrm{cl}(S \cup aT) = \mathrm{cl}(S \cup aT')$. Therefore, we have
\begin{equation}\label{eq: eq min2}
    \mathrm{cl}(S \cup bT) = \mathrm{cl}(S \cup aT) = \mathrm{cl}(S \cup aT') \subseteq \mathrm{cl}(S \cup bT') \subseteq\mathrm{cl}(S \cup bT).
\end{equation}
Hence we have equality throughout. 
So, we have
\[
S\subseteq T', \quad \mathrm{cl}(S)\subseteq \mathrm{cl}(T'), \quad \textrm{and} \quad \mathrm{cl}(S) \vee a\mathrm{cl}(T') = \mathrm{cl}(S) \vee b\mathrm{cl}(T').
\]
By minimality of $T$ in \eqref{eq: eq min}, we should have either $\mathrm{cl}(S)=\mathrm{cl}(T')$ or $\mathrm{cl}(S)=\mathrm{cl}(S \cup bT')$. But, if $\mathrm{cl}(S)=\mathrm{cl}(T')$, since $b\leq 1$, we also have
\begin{equation}
   \mathrm{cl}(S \cup bT) = \mathrm{cl}(S \cup bT') = \mathrm{cl}(S) \vee b\mathrm{cl}(T') = \mathrm{cl}(S) \vee b\mathrm{cl}(S) = \mathrm{cl}(S).
\end{equation}
So, in both cases, we have contradiction. This shows $Q_M$ is strongly torsion-free.

\underline{Atomicity and Finite generation}: Next we check that $Q_M$ is atomic.  Let $x\in E$.  We claim that $F :=\mathrm{cl}(\{x\})$ is a weak atom.  To see this, suppose $\mathrm{cl}(S)\subseteq F$.  We wish to show there is some $c\leq 1$ such that $\mathrm{cl}(S) = cF$.  We proceed by induction on the number of elements $ay\in S$ with $y\neq x$.  For the case where there are no such elements, say $S = \{a_1x, \ldots, a_n x\}$, then Conditions (2) and (3) of Proposition \ref{proposition: valuated closure axioms} imply $\mathrm{cl}(S) = (\max_i a_i) F$, which establishes the claim except when $\max_i a_i > 1$.  In this case, Condition (3) of Proposition \ref{proposition: valuated closure axioms} gives 
\begin{equation}
F\subseteq (\max_i a_i)F = \mathrm{cl}(S)\subseteq F.    
\end{equation}
This establishes the base case.

Now let $ay\in S$ with $y\neq x$ and let $S' = S \backslash\{ay\}$.  By Condition (4) of Proposition \ref{proposition: valuated closure axioms} (applied to the empty set), there is some $b\leq 1$ such that
\begin{equation}
   ay \in \mathrm{cl}(\{bx\}) \quad \textrm{and} \quad  bx\in \mathrm{cl}(\{ay\}).
\end{equation}
Hence $\mathrm{cl}(S) = \mathrm{cl}(S' \cup \{bx\})$ (each is contained in the other since both contain $ay$ and $bx$ and $S'$).  Moreover, the inductive hypothesis implies there is $c\leq 1$ such that
\begin{equation}
 \mathrm{cl}(S) = \mathrm{cl}(S' \cup \{bx\}) = cF.   
\end{equation}
This establishes the claim.  Because $Q_M$ is torsion-free, $\mathrm{cl}(\{x\})$ is an atom.  By Lemma \ref{lemma: multiple of atom}, $\mathrm{cl}(\{dx\})$ is an atom for any $d\in \mathbb{R}_{\max}$.  Since such elements generate $Q_M$ as a semigroup, $Q_M$ is atomic.

Our description of the atoms together with the finiteness of $E$ and Lemma \ref{lemma: finitely many atoms}, implies $M$ is finitely generated.

\underline{Semimodularity}:  Finally, we must show semimodularity.  Let $S, T, U\subseteq \hat{M}$ be finite and satisfy $\mathrm{cl}(S)\preceq\mathrm{cl}(T)$.  We wish to show $\mathrm{cl}(S\cup U)\preceq\mathrm{cl}(T\cup U)$.  Without loss of generality, we may assume that $S\subseteq T$ and $S\subseteq U$.  We also assume $S \neq T$, since the result is trivial otherwise. Also, we may assume that $\text{cl}(S) \neq \text{cl}(T)$. By Lemma \ref{lemma: covering semimodulear}, there is some $cx\in T\backslash S$ such that $\mathrm{cl}(T) = \mathrm{cl}(S\cup\{cx\})$, and hence we have 
\begin{equation}
\mathrm{cl}(T\cup U) = \mathrm{cl}(\mathrm{cl}(T)\cup U) = \mathrm{cl}(\mathrm{cl}(S\cup\{cx\})\cup U) = \mathrm{cl}(S\cup \{cx\} \cup U) = \mathrm{cl}(U\cup \{cx\}).
\end{equation}

Semimodularity then reduces to showing that if $S\subseteq U$ and $\mathrm{cl}(S) \preceq \mathrm{cl}(S \cup \{cx\})$ then 
\begin{equation}\label{eq: assume}
\mathrm{cl}(U) \preceq \mathrm{cl}(U \cup \{cx\}).    
\end{equation} 
Let $F$ be a finitely generated flat such that $\mathrm{cl}(U)\subseteq F\subseteq\mathrm{cl}(U\cup\{cx\})$.  We must show there is some $b\leq c$ such that $F = \mathrm{cl}(U \cup \{bx\})$.  We may suppose $F\neq \mathrm{cl}(U)$, because otherwise $b=0$ works.  

Let $V$ minimize the number of supports of elements that are not multiples of $x$ among all subsets with $U\subseteq V$ and $\mathrm{cl}(V) = F$.  Suppose $ay\in V\backslash U$ with $y\neq x$.  Then $ay\in\mathrm{cl}(U\cup\{cx\})$.  By Condition (4) of Proposition \ref{proposition: valuated closure axioms}, there exists $d\leq c$ such that
\begin{equation}
 dx\in \mathrm{cl}(U\cup\{ay\}) \quad \textrm{and} \quad ay\in\mathrm{cl}(U\cup\{dx\}).
\end{equation}
Thus $\mathrm{cl}(U \cup \{ay\}) = \mathrm{cl}(U\cup\{dx\})$ and so $\mathrm{cl}(V) = \mathrm{cl}(V \cup \{dx\} \backslash \{ay\})$.  This contradicts minimality of $V$.  Hence all elements of $V\backslash U$ are multiples of $x$, say $V = U \cup \{b_1x, \ldots, b_n x\}$.  Letting $b = \max b_i$, it is easy to see that $\mathrm{cl}(V) = \mathrm{cl}(U \cup \{bx\})$, contradicting minimality unless $V = U \cup \{bx\}$.  Thus $F = \mathrm{cl}(U\cup\{bx\})$.  This proves semimodularity if $b \leq c$.

If $b > c$, then with \eqref{eq: assume} we have
\begin{equation}
    \mathrm{cl}(U \cup \{bx\}) = F \subseteq \mathrm{cl}(U \cup \{cx\}) \subseteq \mathrm{cl}(U \cup \{bx\}).
\end{equation}
Thus we have equality throughout and $F = \mathrm{cl}(U \cup \{cx\})$ (and $c\leq c$).  This proves semimodularity.
\end{proof}

The geometric $\mathbb{R}_{\max}$-module $Q_M$ only determines $M$ up to projective equivalence.  To recover the projective equivalence class of $M$, it will help to relate projective equivalence to our closure operator.

\begin{lem}\label{lemma: iso val mat}
    Let $M, M'$ be valuated matroids on finite sets $E, E'$.  Then
    \begin{enumerate}
        \item There is a one-to-one correspondence between bijections $\psi: \hat{M}\rightarrow\hat{M'}$ which are compatible with the $\mathbb{R}_{\max}$-action (i.e. $\psi(ax) = a\psi(x)$ for $a\in\mathbb{R}_{\max}$ and $x\in \hat{M}$) and pairs $(\phi, s)$ where $\phi: E\rightarrow E'$ is a bijection and $s: E\rightarrow \mathbb{R}_{\max}^\times$ is any map.
        \item There is a one-to-one correspondence between bijections $\psi: \hat{M}\rightarrow\hat{M'}$ which are compatible with the $\mathbb{R}_{\max}$ action and which preserve circuit sets (in the sense that $C\subseteq\hat{M}$ is a circuit set if and only if $\psi(C)$ is a circuit set of $\hat{M'}$) and projective equivalences $(\phi, s)$.
        \item There is a one-to-one correspondence between bijections $\psi: \hat{M}\rightarrow\hat{M'}$ which are compatible with the $\mathbb{R}_{\max}$ action and which preserve closure (in the sense that for any $S\subseteq\hat{M}$, $\mathrm{cl}(\psi(S)) = \psi(\mathrm{cl}(S)$) and projective equivalences $(\phi, s)$.
    \end{enumerate}
\end{lem}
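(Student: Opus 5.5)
For (1), the plan is to write down the correspondence explicitly and check that it is bijective. Given a pair $(\phi,s)$, define $\psi(ax) = a\,s(x)\,\phi(x)$ and $\psi(\textbf{0})=\textbf{0}$. This map is compatible with the action. It is a bijection, with inverse $by\mapsto b\,s(\phi^{-1}(y))^{-1}\phi^{-1}(y)$.

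Conversely, let $\psi$ be a compatible bijection. Then $\psi(\textbf{0})=\psi(\textbf{0}\cdot x)=\textbf{0}\,\psi(x)=\textbf{0}$. For $x\in E$, viewed as $1x$, we have $\psi(1x)\neq \textbf{0}$ by injectivity, so $\psi(1x)=s(x)\phi(x)$ for a unique $s(x)\in\mathbb{R}_{\max}^\times$ and $\phi(x)\in E'$. Compatibility then forces $\psi(ax)=a\,s(x)\phi(x)$. Bijectivity of $\psi$ gives bijectivity of $\phi$: if $\phi(x)=\phi(y)$, then $\psi(1x)$ and $\psi(1y)$ are scalar multiples of each other, so $1x$ is a multiple of $1y$, which forces $x=y$; surjectivity of $\phi$ is similar. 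The two constructions are visibly inverse to each other.

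For (2), restrict the correspondence of (1). Under $\psi$, the circuit set of a circuit $\eta$ is sent to $\{\eta(x)s(x)\phi(x)\}$. This is exactly the circuit set of the function $\eta'$ from Definition~\ref{definition: proj eq}, namely $y\mapsto s(\phi^{-1}(y))\eta(\phi^{-1}(y))$. Circuit sets and circuits correspond bijectively, so $\psi$ preserves circuit sets in both directions exactly when $(\phi,s)$ is a projective equivalence. The only point needing care is the phrase ``$C$ is a circuit set iff $\psi(C)$ is.'' Every subset of $\hat{M'}$ has the form $\psi(C)$ because $\psi$ is a bijection, so this covers both directions of the definition.

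For (3), it suffices by (2) to show that a compatible bijection $\psi$ preserves circuit sets if and only if it preserves closure.

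\emph{Circuit sets $\Rightarrow$ closure.} First, $\psi$ is an order isomorphism, since $ax\le bx$ iff $a\le b$ and $\psi$ scales by the same $s(x)$ on each line. Hence $\psi(S_\downarrow)=\psi(S)_\downarrow$. Lemma~\ref{lemma: circuits from valuated flats} describes $\mathrm{cl}$ purely in terms of $S_\downarrow$ and circuit sets, so it gives $\psi(\mathrm{cl}(S))=\mathrm{cl}(\psi(S))$.

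\emph{Closure $\Rightarrow$ circuit sets.} Use Lemma~\ref{lemma: closure operator determines valuated circuits}, which characterizes circuit sets using only closure, the element $\textbf{0}$, the ``one multiple per line'' condition, downward closure, and intersections. Each of these is preserved by a compatible, order-preserving, closure-preserving bijection. Therefore $C$ is a circuit set iff $\psi(C)$ is.

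The main obstacle is only bookkeeping: confirming that every ingredient of Lemma~\ref{lemma: closure operator determines valuated circuits} is transported by $\psi$. In particular, condition (3) quantifies over subsets $D\subseteq C_\downarrow$, and we need $\psi$ to biject these onto the subsets of $\psi(C)_\downarrow$; this holds because $\psi$ is an order isomorphism.
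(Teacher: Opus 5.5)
Your proposal is correct and follows essentially the same route as the paper: the explicit correspondence $\psi(ax)=a\,s(x)\phi(x)$ for (1), the computation $\psi(C_\eta)=C_{\eta'}$ for (2), and for (3) the reduction to showing that $\psi$ preserves circuit sets if and only if it preserves closure, using Lemma~\ref{lemma: circuits from valuated flats} (with $\psi(S_\downarrow)=\psi(S)_\downarrow$) in one direction and Lemma~\ref{lemma: closure operator determines valuated circuits} in the other. You also spell out the converse direction in (2) and the transport of subsets $D\subseteq C_\downarrow$ in (3), which the paper leaves as ``easily seen.''
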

\begin{proof}
(1): Choose $(\phi, s)$ so that $\phi$ is a bijection.  For $a\in \mathbb{R}_{\max}$ and $x\in E$, let $\psi(ax) = a s(x)\phi(x)$.  This is clearly compatible with the $\mathbb{R}_{\max}$-action.  To see it is injective, let $a'x'\in \hat{M}$ with $a s(x) \phi(x) = a' s(x')\phi(x')$.  Then $\phi(x) = \phi(x')$ as elements of $E$, so $x = x'$.  Dividing by $s(x)$ gives $a = a'$.  To see $\psi$ is surjective, let $b\in\mathbb{R}_{\max}$ and $y\in E'$.  There is some $x\in E$ with $\phi(x) = y$.  Letting $a = s(x)^{-1}b$, we have $\psi(ax) = by$.

To construct the inverse, we now choose a bijection $\psi: \hat{M}\rightarrow\hat{M'}$ compatible with the $\mathbb{R}_{\max}$-action.  Then $\psi(\mathbf{0}) = \mathbf{0}$, so $\psi(x)\neq \mathbf{0}$ for $x\in E$.  Consequently, for each $x\in E$, there exists $s(x)\in \mathbb{R}_{\max}^\times$ and $\phi(x) \in E'$ such that $\psi(x) = s(x) \phi(x)$.  $\phi$ is bijective because $\psi: \hat{M}\backslash\{\mathbf{0}\}\rightarrow\hat{M'}\backslash\{\mathbf{0}\}$ induces a bijection on $\mathbb{R}_{\max}^\times$ orbits, and these orbits are in bijection with $E$ and $E'$ respectively.  For $a\in \mathbb{R}_{\max}$ we have $\psi(ax) = a s(x) \phi(x)$, and this formula uniquely determines $(\phi, s)$ so our two constructions are mutually inverse.

(2): Suppose that $\psi:\hat{M} \to \hat{M}'$ is a bijection compatible with the $\mathbb{R}_{\max}$ action preserving circuit sets. Let $(\phi,s)$ be the corresponding pair defined in (1). Let $\eta:E \to \mathbb{R}_{\max}$ be a circuit of $M$. We want to show that the following function is a circuit of $M'$:
\begin{equation}\label{eq: circuit check}
\eta':E' \to \mathbb{R}_{\max}, \quad y\mapsto s(\phi^{-1}(y))\eta(\phi^{-1}(y)).
\end{equation}
But, any circuit $\eta:E \to \mathbb{R}_{\max}$ of $M$ uniquely corresponds to the following circuit set:
\[
C_\eta=\{ ax \mid \eta(x)=a\neq -\infty\},  
\]
hence we have
\[
\psi(C_\eta)= \{\psi(ax) \mid ax \in C_\eta\} = \{ as(x)\phi(x) \mid ax\in C_\eta\}=\{ \eta(x)s(x)\phi(x) \mid x \in \mathrm{supp}(\eta)\}
\]
\[
=\{s(\phi^{-1}(y))\eta(\phi^{-1}(y))y \mid y \in \mathrm{supp}(\eta\circ\phi^{-1})\}= \{\eta'(y)y \mid y \in \mathrm{supp}(\eta')\}=C_{\eta'},
\]  
showing that $\eta'$ is a circuit of $M'$. 

Conversely, one can easily see from the above computation that if $(\phi,s)$ preserves circuits then the corresponding $\psi$ preserves circuit sets. 

(3): From (2), it is enough to show that $\psi$ preserves circuit sets if and only if it preserves closure. Note that for any bijection $\psi:\hat{M} \to \hat{M}'$ which is compatible with the $\mathbb{R}_{\max}$ action and $S \subseteq \hat{M}$, we have
\[
\psi(S_\downarrow)=\psi(S)_\downarrow.
\]
From Lemma \ref{lemma: circuits from valuated flats}, for
\begin{equation}
X=\{v\in \hat{M} \mid \exists C\in \mathcal{C} \textrm{ s.t. } v\in C, C\backslash\{v\}\subseteq S_\downarrow \},
\end{equation}
we have
\begin{equation}
    \psi(\mathrm{cl}(S)) = \psi(S)_\downarrow \cup \psi(X). 
\end{equation}
But, since $\psi$ preserves circuit sets, we have
\[
\psi(X) = \{  u \in \hat{M}' \mid \exists C'\in \mathcal{C'} \textrm{ s.t. } u \in C', C'\backslash\{u\}\subseteq \psi(S)_\downarrow\},
\]
where $\mathcal{C}'$ is the set of circuit sets of $M'$. It follows that 
\[
\psi(\text{cl}(S))=\psi(S)_\downarrow \cup \{  u \in \hat{M}' \mid \exists C'\in \mathcal{C'} \textrm{ s.t. } u \in C', C'\backslash\{u\}\subseteq \psi(S)_\downarrow\} = \text{cl}(\psi(S)).
\]
Conversely, from Lemma \ref{lemma: closure operator determines valuated circuits}, it is clear that if $\psi$ preserves closure, then it preserves circuit sets. 
\end{proof}

\begin{rmk}
Lemma \ref{lemma: iso val mat} (2) shows that when we restrict to $\mathbb{B}=\{0,1\}$, this is nothing but isomorphism classes of matroids (with strong maps).
\end{rmk}

\begin{mythm}\label{theorem: geometric modules1}
Let $N$ be a finitely generated geometric $\mathbb{R}_{\max}$-module. Then, there exists a simple valuated matroid $M$ such that $N$ is isomorphic to $Q_M$. 
\end{mythm}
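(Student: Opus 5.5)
The strategy is to build a closure operator directly from $N$, invoke Theorem~\ref{proposition: valuated matroid closure cryptomorphism} to get a valuated matroid $M$, and then identify $Q_M$ with $N$ via Proposition~\ref{proposition: iso $Q_M$ to flats}.

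\textbf{Ground set and closure.} By Lemma~\ref{lemma: finitely many atoms}(2), the atoms of $N$ fall into finitely many classes under $x\sim ay$ with $a\in\mathbb{R}_{\max}^\times$. Let $E$ be a set of representatives $x_1,\dots,x_n$, one atom per class. Torsion-freeness, which strong torsion-freeness implies, makes $a\mapsto ax$ injective on each atom. So the map
\[
\iota:\hat{M}\to N,\qquad ax\mapsto ax,
\]
is injective: two distinct classes never share a nonzero multiple, and $\textbf{0}\mapsto 0$. Its image is $\{0\}\cup\{\text{atoms}\}$. For $S\subseteq\hat{M}$, set
\[
\mathrm{cl}(S)=\{v\in\hat{M}\mid \iota(v)\le \textstyle\sum_{s\in S'}\iota(s)\ \text{for some finite } S'\subseteq S\}.
\]
This is an algebraic closure operator by construction.

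\textbf{Checking the axioms of Proposition~\ref{proposition: valuated closure axioms}.}
\begin{enumerate}
\item Axiom (1) holds because $0\le 0$.
\item Axiom (2) holds because scalar multiplication by a unit is an order automorphism of $N$.
\item Axiom (3), downward closure, holds because $a\le b$ gives $ax\le bx$.
\item Axiom (5) uses strong torsion-freeness. Suppose $ax\le s+bx$ with $b<a$ and $s=\sum S'$. Then $s+ax=s+bx$. Put $y=s+ax\ge s$ and $\alpha=a\cdot a^{-1}=1$, and compare $s+y$ with $s+(ba^{-1})y$. Definition~\ref{definition: strongly torsionfree}(2) then forces $ba^{-1}=1$ (impossible) or $s=s+ax$, which gives $ax\le s$. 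The coefficients must be arranged carefully so that $s+\beta y$ reproduces $s+bx$; this should be routine.
\item Axiom (4), the exchange property, uses semimodularity. Since $y$ is an atom, $0\preceq by$, so semimodularity gives $s\preceq s+by$. Because $ax\le s+by$, the element $s+ax$ lies in $[s,s+by]$. By the weak-cover property, $s+ax=s+cby$ for some $c\le 1$. Then $cb\,y\le s+ax$ and $ax\le s+cb\,y$, which is the required exchange with $c':=cb\le b$.
\end{enumerate}
Theorem~\ref{proposition: valuated matroid closure cryptomorphism} now yields a unique valuated matroid $M$ on $E$ with this closure operator.

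\textbf{Identifying $Q_M$ with $N$.} By Proposition~\ref{proposition: iso $Q_M$ to flats}, $Q_M$ is the module of finitely generated flats. Define $\Phi(\mathrm{cl}(S))=\sum_{s\in S}\iota(s)$.
\begin{itemize}
\item \emph{Well-defined and injective.} $\mathrm{cl}(S)=\mathrm{cl}(T)$ holds iff each element of $S$ lies below $\sum T$ and each element of $T$ lies below $\sum S$, which holds iff the two sums are equal.
\item \emph{Homomorphism.} $\Phi$ is additive and compatible with scalars.
\item \emph{Surjective.} This follows from atomicity.
\end{itemize}
(Alternatively, one could check that $\iota$ is an $N$-valued covector.)

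\textbf{Simplicity.} A parallel pair would give $x\in\mathrm{cl}(\{ay\})$ with $x\ne y$, i.e.\ $x\le ay$ for two atoms in different classes. Since $ay$ is an atom, $x=c\,ay$ for some $c$, contradicting the choice of distinct classes. A loop would give $x\in\mathrm{cl}(\emptyset)$, i.e.\ $x=0$, impossible for an atom. Hence $M$ is simple.

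\textbf{Main obstacle.} The hardest steps should be axiom (5) and the exchange axiom (4). These are where the scalar bookkeeping in the weak-cover and strong-torsion-free conditions must be matched exactly to the form of Proposition~\ref{proposition: valuated closure axioms}, including the edge cases $a=\textbf{0}$ or $c=\textbf{0}$.
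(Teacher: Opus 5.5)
Your proposal is correct and follows the paper's proof essentially step for step. It uses the same ground set of atom-class representatives, the same closure operator ($v\in\mathrm{cl}(S)$ iff $v$ lies below a finite sum of elements of $S$), semimodularity for axiom (4), strong torsion-freeness for axiom (5), the same isomorphism $\mathrm{cl}(S)\mapsto\sum_{x\in S}x$, and the same simplicity argument. The only difference is order: you verify (4) and (5) directly in $N$ and then identify $Q_M$ with the module of finitely generated flats, while the paper first shows that the module of finitely generated closed sets is isomorphic to $N$ and checks the axioms there.
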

\begin{proof}
Let $N$ be a finitely generated geometric $\mathbb{R}_{\max}$-module. To construct a simple valuated matroid $M$, we first define its underlying set $E$ as follows: we partition the set of atoms of $N$ into orbits under scalar multiplication by $\mathbb{R}_{\max}^\times$ (using Lemma \ref{lemma: multiple of atom}) and let $E$ contain one element of each orbit. Since $N$ is finitely generated, from Lemma \ref{lemma: finitely many atoms}, $E$ is finite. 

Let $\hat{M} = (\mathbb{R}_{\max}\times E) / (\{-\infty\} \times E)$. Since $N$ is strongly torsion-free, from Proposition \ref{proposition: approaches to atomicity}, every weak atom is either $0$ or an atom. Hence, we may identify $\hat{M}$ with the set of weak atoms. For $S\subseteq\hat{M}$ define $\mathrm{cl}$ by
\begin{equation}\label{definition: closure1}
 \mathrm{cl}(S) := \{y\in \hat{M} \mid \exists x_1, \ldots, x_n\in S \textrm{ such that }y \leq \sum_{i=1}^n x_i\}.   
\end{equation}
Clearly $S\subseteq\mathrm{cl}(S)$ and if $S\subseteq T$ then $\mathrm{cl}(S)\subseteq\mathrm{cl}(T)$.  Each element $x\in\mathrm{cl}(\mathrm{cl}(S))$ is bounded by a sum of finitely many elements of $\mathrm{cl}(S)$ each of which is bounded by a finite sum of elements of $S$ and hence $x$ is bounded by a finite sum of elements of $S$ and $x\in\mathrm{cl}(S)$.  Thus $\mathrm{cl}$ is a closure operator.  Clearly any element $y\in\mathrm{cl}(S)$ belongs to $\mathrm{cl}(T)$ for a finite subset $T\subseteq S$.  Hence $\mathrm{cl}$ is finitary. We call $S \subseteq \hat{M}$ closed if $\text{cl}(S)=S$.

Let $Q_M$ be the join semi-lattice of finitely generated closed sets, together with the scalar multiplication:
\begin{equation}
 a\mathrm{cl}(S) := \{ax\mid x\in\mathrm{cl}(S)\}.   
\end{equation}
Now, one can easily see that $a\mathrm{cl}(S)=\mathrm{cl}(aS)$. In fact, it is clear that $a\mathrm{cl}(S)\subseteq \mathrm{cl}(aS)$. For the other inclusion, let $y \in \mathrm{cl}(aS)$. Then, there are $x_i \in aS$ such that $y \leq \sum x_i$, or if we write $x_i=ay_i$ for some $y_i \in S$, we have
\begin{equation}
 y \leq \sum ay_i = a(\sum y_i) \in a\text{cl}(S). 
 \end{equation}

Now, we prove that $Q_M$ is isomorphic to $N$. Define a map $\psi:Q_M \rightarrow N$ by
\begin{equation}\label{eq: map def}
\psi(\mathrm{cl}(S)) = \sum_{x\in S} x \quad \textrm{(for finite $S\subseteq\hat{M}$)}.   
\end{equation}
We claim that this is a well-defined isomorphism of $\mathbb{R}_{\max}$-modules. We first show that $\eqref{eq: map def}$ depends on $S$ only through $\mathrm{cl}(S)$, i.e., if $\text{cl}(S)=\text{cl}(T)$, then $\psi(\mathrm{cl}(S)) =\psi(\mathrm{cl}(T)) $. Indeed, observe that any element of $\mathrm{cl}(S)$ and hence any sum of such elements is bounded by $\psi(\mathrm{cl}(S))$.  On the other hand $\psi(\mathrm{cl}(S))$ is a sum of elements of $\mathrm{cl}(S)$.  Thus we can alternatively describe $\psi(F)$ as the maximum of all sums of elements of $F$.  Note also that
\begin{equation}
 \mathrm{cl}(S) = \{x\in\hat{M}\mid x\leq \psi(\mathrm{cl}(S))\}   
\end{equation}
since if $x\leq \psi(\mathrm{cl}(S))$ then $x$ is bounded by a sum of elements of $S$ and if $x\in\mathrm{cl}(S)$ then $x$ is bounded by the maximum of all sums of elements of $\mathrm{cl}(S)$.

Surjectivity of $\psi$ follows from the assumption that every element is a sum of weak atoms (Proposition \ref{proposition: approaches to atomicity}).  For injectivity, suppose $\psi(\mathrm{cl}(S)) = \psi(\mathrm{cl}(T))$. Then, we have
\begin{equation}
    \mathrm{cl}(S) = \{x\in\hat{M}\mid x\leq \psi(\mathrm{cl}(S))\} = \{x\in\hat{M}\mid x\leq \psi(\mathrm{cl}(T))\} = \mathrm{cl}(T),
\end{equation}
showing that $\psi$ is injective, and hence $\psi$ is bijective.

To see that $\psi$ is a homomorphism, we have
\begin{equation}
 \psi(\mathrm{cl}(S) \vee \mathrm{cl}(T)) = \psi(\mathrm{cl}(S\cup T)) = \psi(\mathrm{cl}(S)) + \psi(\mathrm{cl}(T)).   
\end{equation}
Also for $a\in\mathbb{R}_{\max}$, we have $\psi(a\mathrm{cl}(S)) = \sum_{ax\in aS} ax = a\psi(\mathrm{cl}(S))$. Thus $\psi$ is an isomorphism.  In particular, $Q_M$ is a geometric $\mathbb{R}_{\max}$-module.

Next we prove that the closure operator \eqref{definition: closure1} satisfies the conditions of Proposition \ref{proposition: valuated closure axioms}.  Conditions (1), (2) and (3) are straightforward.  For Condition (4), let $x,y\in E$ and $a, b\in\mathbb{R}_{\max}$ and suppose $ax\in \mathrm{cl}(S\cup \{by\})$.  There is a finite subset $S'\subseteq S$ such that $ax\in \mathrm{cl}(S'\cup\{by\})$.  Note that $\mathrm{cl}(\{by\})$ is a weak atom of $Q_M$ (it maps to the weak atom $by$ under the isomorphism $\psi$), so $\mathrm{cl}(\emptyset)\preceq \mathrm{cl}(\{by\})$ and by semimodularity of $Q_M$, we have a weak cover
\begin{equation}
\mathrm{cl}(S')\preceq \mathrm{cl}(S'\cup \{by\}).    
\end{equation}
Combining this observation with $\mathrm{cl}(S')\subseteq\mathrm{cl}(S'\cup\{ax\})\subseteq\mathrm{cl}(S'\cup \{by\})$, we obtain some $c\leq b$ such that 
\begin{equation}
\mathrm{cl}(S'\cup\{ax\}) = \mathrm{cl}(S'\cup\{cy\}).    
\end{equation}
This implies  $\mathrm{cl}(S\cup\{ax\}) = \mathrm{cl}(S\cup\{cy\})$.  Then $ax\in \mathrm{cl}(S\cup \{cy\})$ and $cy\in\mathrm{cl}(S\cup\{ax\})$.  

For Condition (5), we suppose $ax\in \mathrm{cl}(S\cup\{bx\})$ and $b < a$.  Then $ax\in\mathrm{cl}(S'\cup\{bx\})$ for some finite $S'\subseteq S$.  Then $\mathrm{cl}(S' \cup\{bx\}) = \mathrm{cl}(S'\cup \{ax\})$.  By the strongly torsion-free condition on $Q_M$, either $a = b$ (a contradiction) or $\mathrm{cl}(S') = \mathrm{cl}(S'\cup\{ax\})$ which implies $ax\in\mathrm{cl}(S')\subseteq\mathrm{cl}(S)$.

By Proposition \ref{proposition: valuated matroid closure cryptomorphism}, there is a unique valuated matroid whose closure operator is $\mathrm{cl}$.  Also $\mathrm{cl}(\emptyset) = \{\textbf{0}\}$ and $\mathrm{cl}(ax) = \{bx\mid b\leq a\}$ for $ax\in \hat{M}$ (since $ax$ is a weak atom).  Thus the valuated matroid is simple (a circuit set with one element would give rise to an element of $\mathrm{cl}(\emptyset)$, while a circuit set with two elements would contradict our characterization of $\mathrm{cl}(ax)$).  
\end{proof}

\begin{mythm}\label{theorem: geometric modules2}
There exists a one-to-one correspondence between projective equivalence classes of simple valuated matroids and isomorphism classes of finitely generated geometric $\mathbb{R}_{\max}$-modules. 
\end{mythm}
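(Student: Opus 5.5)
The correspondence will be the map $M\mapsto Q_M$, read via Proposition \ref{proposition: iso $Q_M$ to flats} as the module of finitely generated flats. By Proposition \ref{proposition: $Q_M$ geometric}, $Q_M$ is a finitely generated geometric $\mathbb{R}_{\max}$-module, and by Theorem \ref{theorem: geometric modules1} every finitely generated geometric module is isomorphic to some $Q_M$ with $M$ simple. So it remains to show two things: the map descends to projective equivalence classes, and it is injective on them.

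\textbf{Well-definedness.} Let $(\phi,s)$ be a projective equivalence from $M$ to $M'$. By Lemma \ref{lemma: iso val mat}(3) it corresponds to a bijection $\psi:\hat M\to\hat{M'}$ which is compatible with the $\mathbb{R}_{\max}$-action and satisfies $\psi(\mathrm{cl}(S))=\mathrm{cl}(\psi(S))$. Then $\psi$ sends flats to flats: if $F$ is a flat then $\mathrm{cl}(\psi(F))=\psi(\mathrm{cl}(F))=\psi(F)$. It also sends finitely generated flats to finitely generated flats and commutes with joins $\mathrm{cl}(S\cup T)$ and with scalar multiplication. Hence $F\mapsto\psi(F)$ is an isomorphism of modules of finitely generated flats, and so $Q_M\cong Q_{M'}$.

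\textbf{Injectivity.} Let $M,M'$ be simple and let $\Theta:Q_M\to Q_{M'}$ be a module isomorphism. The main step is to identify, inside $Q_M$, the atoms together with $0$ with $\hat M$ when $M$ is simple.
\begin{enumerate}
\item The proof of Proposition \ref{proposition: $Q_M$ geometric} shows that each $\mathrm{cl}(\{ax\})$ is an atom (or $0$ when $a=\mathbf 0$).
\item Conversely, let $F=\mathrm{cl}(S)$ be an atom. Each element $\mathrm{cl}(\{v\})$ with $v\in S$ lies below $F$, so each is a multiple of $F$. Since $F$ is their join, $F=\mathrm{cl}(\{ax\})$ for some $ax$.
\item The map $ax\mapsto\mathrm{cl}(\{ax\})$ is injective. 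Simplicity gives $\mathrm{cl}(\emptyset)=\{\mathbf 0\}$ and, since there are no one- or two-element circuit sets, $\mathrm{cl}(\{ax\})=\{bx\mid b\le a\}$ by Lemma \ref{lemma: circuits from valuated flats}. These sets are distinct for distinct $ax$.
\end{enumerate}
So we obtain a bijection $\hat M\cong\{\text{atoms}\}\cup\{0\}$ that respects the $\mathbb{R}_{\max}$-action and the order. Under it, $y\in\mathrm{cl}(S)$ if and only if $\mathrm{cl}(\{y\})\le\bigvee_{v\in S'}\mathrm{cl}(\{v\})$ for some finite $S'\subseteq S$, using that $\mathrm{cl}$ is algebraic. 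In other words, the closure of $M$ is recovered from $Q_M$ by the formula \eqref{definition: closure1}.

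Since $\Theta$ preserves atoms, $0$, the scalar action, joins and the order, it induces an action-compatible bijection $\psi:\hat M\to\hat{M'}$ satisfying $\psi(\mathrm{cl}(S))=\mathrm{cl}(\psi(S))$. Lemma \ref{lemma: iso val mat}(3) then turns $\psi$ into a projective equivalence $(\phi,s)$.

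The main obstacle is the atom identification for simple $M$: checking that every atom of $Q_M$ has the form $\mathrm{cl}(\{ax\})$, and that distinct $ax$ give distinct atoms. The second half is exactly where simplicity is needed. Without it, parallel elements or loops would produce coinciding atoms and lose the bijection with $E$.
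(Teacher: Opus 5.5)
Your proposal is correct and follows the same route as the paper: well-definedness via the closure-preserving, action-compatible bijection of Lemma \ref{lemma: iso val mat}(3), surjectivity from Theorem \ref{theorem: geometric modules1}, and injectivity by identifying $\hat M$ with the weak atoms (atoms together with $0$) of $Q_M$ and transporting closure through the module isomorphism. Your steps (2)--(3) go further than the paper: using simplicity and Lemma \ref{lemma: circuits from valuated flats}, they show that for an arbitrary simple $M$ the elements of $\hat M$ correspond exactly to the atoms of $Q_M$ together with $0$, whereas the paper only asserts this by appeal to ``the construction of Theorem \ref{theorem: geometric modules1}''.
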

\begin{proof}
Let $X$ be the set of projective equivalence classes of simple valuated matroids and $Y$ be the set of isomorphism classes of $\mathbb{R}_{\max}$-modules. For each $[M] \in X$, we define
\[
\Phi([M])=[Q_M],
\]
where $[M]$ is the projective equivalence class of a simple valuated matroid $M$ and $[Q_M]$ is the isomorphism class of $Q_M$.

We first check that $\Phi$ is well-defined. Suppose that $M' \in [M]$, i.e., there exists a bijection $\psi:\hat{M} \to \hat{M}'$ preserving closure. Since $\psi$ preserve closure, it induces a join-preserving bijection $\widetilde{\psi}$ between the set $Q_M$ of finitely generated flats of $M$ and $Q_{M'}$ of finitely generated flats of $M'$. Moreover, since $\psi$ is compatible with the $\mathbb{R_{\max}}$ action, $\widetilde{\psi}$ preserves scalar multiplication. and hence $\widetilde{\psi}$ is an isomorphism between $Q_M$ and $Q_{M'}$. 

Surjectivity of $\Phi$ directly follows from Theorem \ref{theorem: geometric modules1}. For injectivity, suppose that $\Phi([M])=[N]=\Phi([M'])$. Let $A_N$ be the set of weak atoms of $N$. From the construction of Theorem \ref{theorem: geometric modules1}, we have that there is a bijection 
\[
\psi:\hat{M} \to A_N
\]
which sends the closure operator of $M$ to the one given by
\[
\text{cl}(S)=\{y \in A_{N} \mid y \leq \sum x_i \text{ for some } x_i \in S\}.
\]
Moreover, $\psi$ preserves the $\mathbb{R}_{\max}$ action. Likewise, there is a bijection $\psi':\hat{M}' \to A_N$ satisfying the same properties. Consider
\[
\varphi:=(\psi')^{-1}\circ \psi:\hat{M} \to \hat{M}'.
\]
Then, one can easily check that $\varphi$ is a bijection which preserves the $\mathbb{R}_{\max}$ action and closure. In particular, $[M]=[M']$, showing that $\Phi$ is injective. 
\end{proof}

Note that in the above theorems, the finiteness of $E$ is only explicitly used to show the corresponding geometric $\mathbb{R}_{\max}$-module is finitely generated. The following example shows that the coordinate semiring of an affine tropical variety is naturally equipped with a geometric $\mathbb{R}_{\max}$-module structure associated to an infinite valuated matroid, defined in Appendix \ref{section: infinite val mat}.

\begin{myeg}\label{example: tropical var is val}
Let $K$ be a valued field with a surjective valuation $v: K \rightarrow \mathbb{R}_{\max}$. We let $A = K[x_1, \ldots, x_n] / I$ be a finitely generated $K$-algebra (equipped with a choice of generating set).  Let $E\subseteq A$ be the set of primitive monomials, i.e. elements of the form $x_1^{k_1}\ldots x_n^{k_n}$.  Because $A$ is a vector space over $K$, we obtain an infinite linear valuated matroid $M$ on $E$ as in Definition \ref{definition: infinitem val mat}.

The coordinate semiring of the tropicalization of $\mathrm{Spec} \,A$ is defined as follows:
\begin{equation}\label{eq: coordinate}
\mathbb{R}_{\max}[x_1, \ldots, x_n] / \mathrm{Bend}(I),
\end{equation}
where $\mathrm{Bend}(I)$ is the congruence relation generated by the bend relations associated to $I$; see Section \ref{subsection: semirings} or \cite{giansiracusa2016equations}. One may observe that the coordinate semiring \eqref{eq: coordinate} is precisely $Q_M$ for the valuated matroid described above. To see this, note that the coordinate semiring is the semiring of the $\mathcal{O}_K$-submodules of $A$ which are generated by finitely many monomials (c.f. \cite[Corollary 6.11]{JMT20}).  Then one can apply Proposition \ref{proposition: flats of linear valuated matroids} (2) to identify $\mathcal{O}_K$-submodules of $A$ which are generated by monomials with flats.  By identifying compact elements of the lattices of flats and of monomial submodules, we see that the coordinate semiring consists of finitely generated flats.

As a consequence, the coordinate semiring of a tropical variety is a geometric $\mathbb{R}_{\max}$-module.
\end{myeg}

The following lemma shows the theory of geometric $K$-modules is a special case of geometric $\mathcal{O}_K$-modules.

\begin{lem}
Let $K$ be an idempotent semifield, and $N$ be a $K$-module.  Then $N$ is geometric (resp. weakly atomic, atomic, semimodular, torsion-free, or strongly torsion-free) as a $K$-module if and only if it is geometric (resp. weakly atomic, atomic, semimodular, torsion-free, or strongly torsion-free) as an $\mathcal{O}_K$-module.
\end{lem}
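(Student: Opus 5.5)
The plan is to check each property separately, reducing everything to the observation that most of the notions involved are defined using only the $\mathcal{O}_K$-module structure together with the canonical partial order. That order depends only on addition, so it is the same whether $N$ is viewed as a $K$-module or as an $\mathcal{O}_K$-module. Torsion-freeness and strong torsion-freeness are exactly Lemma \ref{lemma: torision-free lemma}, so nothing further is needed there. Since geometric means strongly torsion-free, atomic and semimodular, the geometric case follows once atomicity, weak atomicity and semimodularity are handled.

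\textbf{Weak covers and semimodularity.} The key point is that the relation $x\preceq y$ in Definition \ref{definition: weakly cover} uses only coefficients $a\leq 1$, that is, $a\in\mathcal{O}_K$, and the order on $N$. Hence it is literally the same relation for both module structures. Semimodularity is phrased purely in terms of $\preceq$ and addition, so it is also identical for the two structures. Weak atoms are the elements weakly covering $0$, so the sets of weak atoms coincide. Thus "every element is a sum of weak atoms" means the same thing in both settings.

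\textbf{Atomicity.} The map $\phi_{x,y}\colon \mathcal{O}_K\to N$ is defined with domain $\mathcal{O}_K$ in either case. Therefore "$y$ covers $x$" (bijectivity of $\phi_{x,y}$ onto $[x,y]$) and the notion of atom are the same for both structures. Consequently "every element is a sum of atoms" is also unchanged.

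The only step needing care is confirming that none of these definitions secretly uses $K$-scalars when $R=K$. The place to check is whether the interval $[x,y]$ or the weak cover condition was meant with $a\in R$ rather than $a\leq 1$. Definition \ref{definition: weakly cover} explicitly requires $a\leq 1$, so the conditions match, and I expect no genuine obstacle beyond quoting Lemma \ref{lemma: torision-free lemma} for the torsion conditions.
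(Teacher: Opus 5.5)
Your proposal is correct and follows essentially the same route as the paper's proof. You cite Lemma \ref{lemma: torision-free lemma} for the two torsion conditions. For the rest, you observe that the order, the maps $\phi_{x,y}$ (whose domain is $\mathcal{O}_K$ in both cases), weak covers, atoms, and hence semimodularity and (weak) atomicity never use scalars outside $\mathcal{O}_K$.
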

\begin{proof}
We have already seen the cases of torsion-free or strongly torsion-free in Lemma \ref{lemma: torision-free lemma}.
Observe that the maps $\phi_{x, y}$ for $x,y\in N$ do not depend on whether $N$ is viewed as a $K$-module or $\mathcal{O}_K$-module.  In addition, the notion of weak cover depends only on the maps $\phi_{x,y}$, and the notions of semimodular modules or weakly atomic modules depend only on the addition operation and the notion of weak cover.  Similarly, the notion of atom only depends on the maps $\phi_{0,x}$, establishing the claim for atomic modules.
\end{proof}

If $x$ is an element of a geometric lattice, then $\{y\mid y\leq x\}$ is also a geometric lattice.  This fails for geometric $K$-modules since these sets are not closed under scalar multiplication.  However, this does work in the $\mathcal{O}_K$-case, and in the case of geometric $K$-modules, this set is a geometric $\mathcal{O}_K$-module.

\begin{pro}
Let $K$ be an idempotent semifield and let $N$ be a geometric $\mathcal{O}_K$-module.  Let $L\subseteq N$ be a subtractive submodule.  Then $L$ is geometric. In particular, if $x\in N$ then $\{y\in N\mid y\leq x\}$ is geometric.
\end{pro}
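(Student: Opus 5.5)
The plan is to check the three defining conditions (strong torsion-freeness, atomicity, semimodularity) for $L$ one at a time. The guiding observation is that $L$ is an interval-closed part of $N$: if $z\in L$ and $w\leq z$, then $w+z=z\in L$, so subtractivity gives $w\in L$. Hence $L$ is a downward closed $\mathcal{O}_K$-submodule. Every interval $[x,y]$ with $y\in L$ is therefore the same whether computed in $L$ or in $N$, and each map $\phi_{x,y}$ for $x\leq y$ in $L$ is the same map into $L$ as into $N$. The final claim follows at once, since $\{y\mid y\leq x\}=[0,x]$ is a subtractive submodule by Lemma \ref{lem: subtractive}.

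\textbf{Strong torsion-freeness and semimodularity.} Strong torsion-freeness of $L$ is immediate, because both conditions of Definition \ref{definition: strongly torsionfree} are universal statements about elements, scalars and equations. Since $L$ is a submodule, these equations hold in $L$ exactly when they hold in $N$. For semimodularity, take $x,y,z\in L$ with $x\preceq y$ in $L$. As intervals and the maps $\phi_{x,y}$ coincide, $x\preceq y$ also holds in $N$. Semimodularity of $N$ then gives $x+z\preceq y+z$ in $N$. The interval $[x+z,y+z]$ lies below $y+z\in L$, so it is contained in $L$, and the weak cover therefore holds in $L$ as well.

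\textbf{Atomicity.} Let $z\in L$ and write $z=\sum a_i$ with $a_i$ atoms of $N$. Each $a_i\leq z$, so each $a_i\in L$ by downward closure. Since $[0,a_i]$ and $\phi_{0,a_i}$ are the same in $L$ and in $N$, each $a_i$ is an atom of $L$, and so $z$ is a sum of atoms of $L$.

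\textbf{Main obstacle.} The only delicate point is the preliminary claim that a subtractive submodule is downward closed, because everything else rests on it. This needs the definition of subtractive to be read as: for $a,b\in N$, if $a+b\in L$ and $b\in L$ then $a\in L$. I would state this reading explicitly. Under it, the argument $w+z=z$ gives downward closure, and the remaining steps are direct transfers as described above.
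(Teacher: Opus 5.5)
Your proposal is correct and follows essentially the same route as the paper. The paper also notes that $L$ contains every element below each of its elements, so the intervals $[x,y]$ and the maps $\phi_{x,y}$ are the same in $L$ and in $N$; from this it deduces atomicity and semimodularity, and it transfers strong torsion-freeness by restricting the defining equations to $L$. You make explicit the steps the paper calls easy: downward closure from subtractivity (correctly reading the definition with $a,b\in N$), the fact that the atoms in a decomposition of $z\in L$ lie in $L$, and the citation of Lemma~\ref{lem: subtractive} for the final claim.
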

\begin{proof}
Let $x, y\in L$ with $x\leq y$.  Since $L$ contains all elements bounded by $y$, we obtain the same map $\phi_{x,y}$ and the same interval $[x, y]$ whether we view $x, y$ as elements of $N$ or $L$.  The claim that $L$ is atomic and semimodular follows easily.

To show $L$ is strongly torsion-free, suppose $x + ay = x + by$ in $L$ with $a,b\leq 1$.  Then this occurs in $N$ so $a = b$ or $x = x + ay = x + by$.  The other condition of Definition \ref{definition: strongly torsionfree} is equally easy.
\end{proof}

\appendix 

\section{Infinite valuated matroids} \label{section: infinite val mat}
We will define an infinite valuated matroid as a filtered union of finite valuated matroids.  Note that we do not require the underlying set to actually be infinite; ``potentially infinite valuated matroid'' would be a more accurate but more cumbersome term.

\begin{mydef}\label{definition: infinitem val mat}
Let $E$ be a set.  An \emph{infinite valuated matroid} on $E$ consists of a choice of valuated matroid $M|_S$ on each finite subset $S\subseteq E$ such that for any $S'\subseteq S\subseteq E$ we have $(M|_S)|_{S'} = M|_{S'}$.
\end{mydef}

We note that the underlying matroid of an infinite valuated matroid is a finitary matroid, introduced in \cite{eppolito2022infinite}. Finitary matroids form a special case of infinite matroids in \cite{bruhn2013axioms}. We note further that our definition is more general than that of \cite{dress1992valuated}.  For instance the following example shows we allow infinite valuated matroids to have infinite rank.

\begin{myeg}
Let $V$ be a vector space over a valued field $K$ and let $E\subseteq V$.  For each finite $S\subseteq E$, we have $S\subseteq V$ so we obtain a linear valuated matroid $M|_S$ on $S$.  If $S'\subseteq S\subseteq E$, then $(M|_S)|_{S'} = M|_{S'}$ is the linear valuated matroid on $S'$.
\end{myeg}

We define the closure operator for infinite valuated matroids in terms of finite valuated matroids. Let $E$ be a set. For $S\subseteq E$, we will denote
\[
\hat{M}|_S:=\{\mathbf{0}\} \cup (\mathbb{R}_{\max}^\times \times S) \subseteq \hat{M}.
\]

\begin{mydef}
Let $M$ be an infinite valuated matroid on a set $E$. If $S$ is finite, we denote the closure operator on $M|_S$ by $\mathrm{cl}_S$. The \emph{closure operator} of $M$ is defined as
\[
\mathrm{cl}(T) = \bigcup_{S\subseteq E \textrm{ finite}} \mathrm{cl}_S(T \cap \hat{M}|_S).
\]
\end{mydef}

The following lemma shows that closure operators of valuated matroids behave nicely under restriction.

\begin{lem}\label{lemma: restriction of closure operator}
\begin{enumerate}
    \item Let $M$ be a valuated matroid on a finite set $E$ and let $S\subseteq E$.  Let $x\in \hat{M}|_S$ and let $T\subseteq \hat{M}|_S$.  Then $x\in \mathrm{cl}(T)$ if and only if $x\in \mathrm{cl}_S(T)$.
    \item The same holds if $M$ is an infinite valuated matroid on an arbitrary set $E$.  Consequently, $\mathrm{cl}$ determines $\mathrm{cl}_S$ as $\mathrm{cl}_S(T) = \mathrm{cl}(T) \cap \hat{M}|_S$.
\end{enumerate}
\end{lem}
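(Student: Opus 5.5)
The plan is to reduce both parts to Lemma \ref{lemma: circuits from valuated flats}, which describes $\mathrm{cl}(T)$ as $T_\downarrow$ together with all $v$ lying in a circuit set $C$ with $C\backslash\{v\}\subseteq T_\downarrow$. The one fact about restriction we need is standard: the circuits of $M|_S$ are exactly the circuits of $M$ whose support is contained in $S$. Translated to the present language, the circuit sets of $M|_S$ are exactly the circuit sets of $M$ contained in $\hat{M}|_S$.

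For part (1), let $T\subseteq\hat{M}|_S$. Then $T_\downarrow\subseteq\hat{M}|_S$, and downward closure computed in $\hat{M}|_S$ agrees with that computed in $\hat{M}$.

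If $x\in\mathrm{cl}_S(T)$, the lemma applied to $M|_S$ gives two cases. Either $x\in T_\downarrow$, or there is a circuit set $C$ of $M|_S$ with $x\in C$ and $C\backslash\{x\}\subseteq T_\downarrow$. Such a $C$ is also a circuit set of $M$, so in both cases $x\in\mathrm{cl}(T)$.

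Conversely, let $x\in\mathrm{cl}(T)$ with $x\in\hat{M}|_S$. Either $x\in T_\downarrow$, or some circuit set $C$ of $M$ contains $x$ with $C\backslash\{x\}\subseteq T_\downarrow\subseteq\hat{M}|_S$. In the second case $x\in\hat{M}|_S$ as well. Since $\mathbf{0}\notin C$, we get $\mathrm{supp}(C)\subseteq S$, so $C$ is a circuit set of $M|_S$ and $x\in\mathrm{cl}_S(T)$.

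For part (2), fix a finite $S$, a subset $T\subseteq\hat{M}|_S$, and $x\in\hat{M}|_S$.

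If $x\in\mathrm{cl}_S(T)$, then $x\in\mathrm{cl}(T)$ by taking $S$ itself in the union defining $\mathrm{cl}$, since $T\cap\hat{M}|_S=T$.

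Conversely, suppose $x\in\mathrm{cl}_{S'}(T\cap\hat{M}|_{S'})$ for some finite $S'$. Put $S''=S\cup S'$, which is finite. By monotonicity of $\mathrm{cl}_{S''}$ and part (1) applied to $M|_{S''}$ with the subset $S'$, we get $x\in\mathrm{cl}_{S''}(T\cap\hat{M}|_{S'})\subseteq\mathrm{cl}_{S''}(T)$. Here we use $(M|_{S''})|_{S'}=M|_{S'}$, together with $T\cap\hat{M}|_{S''}=T$. Applying part (1) again, now to $M|_{S''}$ with the subset $S$, and using $x,T\subseteq\hat{M}|_S$, we conclude $x\in\mathrm{cl}_S(T)$.

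The formula $\mathrm{cl}_S(T)=\mathrm{cl}(T)\cap\hat{M}|_S$ then follows, because $\mathrm{cl}_S(T)\subseteq\hat{M}|_S$.

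The only point I expect to require care is the claim about circuits of a restriction. For a finite valuated matroid given by its basis function $w$, the restriction $M|_{S}$ is defined via $w$ on the appropriate contraction-free restriction. Its valuated circuits are exactly the valuated circuits of $M$ supported in $S$; this is standard, and I would cite it from \cite{murota2001circuit} or \cite{dress1992valuated} rather than re-derive it. Everything else is bookkeeping with Lemma \ref{lemma: circuits from valuated flats}.
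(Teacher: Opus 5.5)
Your proposal is correct and follows the paper's proof. Part (1) is exactly the paper's argument via Lemma \ref{lemma: circuits from valuated flats}, together with the fact that circuit sets of $M|_S$ are the circuit sets of $M$ lying in $\hat{M}|_S$, which the paper uses implicitly; part (2) likewise applies part (1) twice with the compatibility of restrictions, the only cosmetic difference being that you pass through the larger finite set $S\cup S'$ where the paper passes through $S\cap S'$.
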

\begin{proof}
    (1) Let $x\in \mathrm{cl}(T)$.  Then either $x\in T_\downarrow\subseteq \mathrm{cl}_S(T)$ or there is a circuit set $C$ with $x\in C$ and $C\backslash\{x\}\subseteq T_\downarrow$.  Since $C\subseteq\hat{M}|_S$, $C$ is a circuit set of $M_S$, and hence $x\in \mathrm{cl}_S(T)$.  The converse is similar.

    (2) $\mathrm{cl}_S(T) = \mathrm{cl}_S(T\cap \hat{M}|_S)\subseteq\mathrm{cl}(T)$ by definition, so suppose $x\in \mathrm{cl}(T) \cap \hat{M}|_S$. There is a finite subset $S'\subseteq E$ such that $x\in \mathrm{cl}_{S'}(T) \cap \hat{M}|_S$.  By adding one element if necessary, we may assume $x\in \hat{M}|_{S'}$ so $x\in \mathrm{cl}_{S'}(T) \cap \hat{M}|_{S\cap S'}$.  By part (1) (applied to $S\cap S'\subseteq S')$, $x\in \mathrm{cl}_{S\cap S'}(T)$.  By part (1) (now applied to $S\cap S'\subseteq S$), we get $x\in \mathrm{cl}_S(T)$ as desired.
\end{proof}
\begin{lem}
    The closure operator of an infinite valuated matroid $M$ on a set $E$ is an algebraic closure operator.
\end{lem}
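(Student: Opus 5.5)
The plan is to verify the four axioms of an algebraic closure operator directly from the definition
\[
\mathrm{cl}(T)=\bigcup_{S\subseteq E\text{ finite}}\mathrm{cl}_S(T\cap\hat{M}|_S),
\]
using that each $\mathrm{cl}_S$ is an algebraic closure operator on $\hat{M}|_S$ (the finite case proved earlier). The tool that makes everything work is Lemma \ref{lemma: restriction of closure operator}(2): for finite $S$ and any $T$, $\mathrm{cl}(T)\cap\hat{M}|_S=\mathrm{cl}_S(T\cap \hat M|_S)$. I also use its finite version (1), which gives monotonicity in $S$: if $S\subseteq S'$ are finite, then $\mathrm{cl}_S(U)\subseteq \mathrm{cl}_{S'}(U)$ for $U\subseteq \hat M|_S$.

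\textbf{Extensivity.} Let $t\in T$. Choose a finite $S$ with $t\in\hat{M}|_S$; a single element suffices, or $S=\emptyset$ if $t=\mathbf{0}$. Then $t\in T\cap\hat{M}|_S\subseteq \mathrm{cl}_S(T\cap \hat M|_S)\subseteq\mathrm{cl}(T)$.

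\textbf{Monotonicity.} This is immediate: if $T\subseteq T'$, then $T\cap\hat M|_S\subseteq T'\cap\hat M|_S$, and each $\mathrm{cl}_S$ is monotone.

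\textbf{Finitarity.} Let $x\in\mathrm{cl}(T)$. Then $x\in\mathrm{cl}_S(T\cap\hat M|_S)$ for some finite $S$. Since $\hat M|_S$ is infinite, I cannot simply take $F=T\cap \hat M|_S$. Instead I use that $\mathrm{cl}_S$ is algebraic to find a finite $F\subseteq T\cap\hat M|_S$ with $x\in \mathrm{cl}_S(F)$. Because $F\subseteq \hat M|_S$, we have $F\cap\hat M|_S=F$, so $x\in\mathrm{cl}(F)$.

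\textbf{Idempotence.} This is the main step. By extensivity and monotonicity, $\mathrm{cl}(T)\subseteq\mathrm{cl}(\mathrm{cl}(T))$, so it suffices to show the reverse inclusion.

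1. Let $x\in\mathrm{cl}(\mathrm{cl}(T))$. By the finitarity just proved, $x\in\mathrm{cl}(F)$ for some finite $F\subseteq \mathrm{cl}(T)$.
2. Each $y\in F$ lies in $\mathrm{cl}_{S_y}(T\cap \hat M|_{S_y})$ for some finite $S_y$. Also, $x\in \mathrm{cl}_{S_0}(F\cap\hat M|_{S_0})$ for some finite $S_0$.
3. Enlarge to the finite set $S=S_0\cup\bigcup_y S_y\cup\mathrm{supp}(F)\cup \mathrm{supp}(\{x\})$.
4. By Lemma \ref{lemma: restriction of closure operator}(2), $\mathrm{cl}(T)\cap \hat M|_S=\mathrm{cl}_S(T\cap\hat M|_S)$. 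Since $F\subseteq \hat M|_S$, this gives $F\subseteq \mathrm{cl}_S(T\cap\hat M|_S)$.
5. Applying the same lemma with $F$ in place of $T$, and using $x\in\hat M|_S$, gives $x\in\mathrm{cl}_S(F)$.
6. Idempotence of $\mathrm{cl}_S$ now yields
\[
x\in \mathrm{cl}_S(\mathrm{cl}_S(T\cap\hat M|_S))=\mathrm{cl}_S(T\cap\hat M|_S)\subseteq\mathrm{cl}(T).
\]

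The obstacle here is bookkeeping: all the relevant closures must be moved into a single common finite restriction. Lemma \ref{lemma: restriction of closure operator} is exactly what guarantees that enlarging $S$ neither loses nor creates membership, and it is the key input to cite.
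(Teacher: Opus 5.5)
Your overall route is the same as the paper's: extensivity and monotonicity are checked directly, finitarity is inherited from $\mathrm{cl}_S$, and idempotence is proved by moving everything into one common finite $S$ and using idempotence of $\mathrm{cl}_S$. The problem is that the form of Lemma~\ref{lemma: restriction of closure operator}(2) you quote is false. You state $\mathrm{cl}(T)\cap\hat M|_S=\mathrm{cl}_S(T\cap\hat M|_S)$ for \emph{any} $T$, but the lemma assumes $T\subseteq\hat M|_S$.

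For a counterexample, let $p,q\in E$ be parallel, so some circuit set has the form $\{ap,bq\}$. Take $S=\{p\}$ and $T=\{bq\}$. Then $ap\in\mathrm{cl}(T)\cap\hat M|_S$. On the other hand, $\mathrm{cl}_S(T\cap\hat M|_S)=\mathrm{cl}_S(\emptyset)=\{\mathbf 0\}$, because $p$ is not a loop. In general only the inclusion $\supseteq$ holds, and it holds trivially. Your step 4 uses exactly the other inclusion, for a $T$ not contained in $\hat M|_S$, so as written that step is unjustified.

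The conclusion of step 4 is still true, and the repair uses the other tool you listed, monotonicity in $S$. Each $y\in F$ lies in $\mathrm{cl}_{S_y}(T\cap\hat M|_{S_y})$, and here $T\cap\hat M|_{S_y}\subseteq\hat M|_{S_y}$. Since $S_y\subseteq S$, part (1) applied to $M|_S$ gives
$y\in\mathrm{cl}_S(T\cap\hat M|_{S_y})\subseteq\mathrm{cl}_S(T\cap\hat M|_S)$.
This is precisely the paper's remark that these memberships persist when $S_i$ and $S'$ are replaced by a larger finite set. Your step 5 is fine as written, because there $F\subseteq\hat M|_S$; for the same reason the auxiliary set $S_0$ is actually superfluous.
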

\begin{proof}Clearly if $T\subseteq T'$ then $\mathrm{cl}(T)\subseteq \mathrm{cl}(T')$.  Also, we have
\[
T = \bigcup_{S\subseteq E \textrm{ finite}} T \cap \hat{M}|_S \subseteq \mathrm{cl}(T).
\]
To see $\mathrm{cl}$ is finitary, suppose $x\in \mathrm{cl}(T)$.  Then there is some finite $S\subseteq E$ such that
\[
x\in \mathrm{cl}_S(T \cap \hat{M}|_S).
\]
Since $\mathrm{cl}_S$ is finitary, there is some finite $T'\subseteq T$ such that $x\in \mathrm{cl}_S(T' \cap \hat{M}|_S) \subseteq \mathrm{cl}(T')$.

Let $x\in \mathrm{cl}(\mathrm{cl}(T))$.  There is a finite subset $\{y_1, \ldots, y_n\}\subseteq \mathrm{cl}(T)$ such that $x\in \mathrm{cl}(\{y_1, \ldots, y_n\})$.  There is a finite subset $S'\subseteq E$ such that $x\in \mathrm{cl}_{S'}(\{y_1, \ldots, y_n\})$.  There exist finite subsets $S_i\subseteq E$ such that $y_i \in \mathrm{cl}_{S_i}(T \cap \hat{M}|_{S_i})$.  By Lemma \ref{lemma: restriction of closure operator}, these properties still hold if $S_i$ and $S'$ are replaced with any larger finite set, for instance with $S = S' \cup \bigcup S_i$.  Hence 
\begin{equation}
    x\in \mathrm{cl}_S(\{y_1, \ldots, y_n\}) \subseteq \mathrm{cl}_S(\mathrm{cl}_S(T \cap \hat{M}|_S)) = \mathrm{cl}_S(T\cap \hat{M}|_S) \subseteq\mathrm{cl}(T).
\end{equation}
\end{proof}

\begin{lem}Let $E$ be an arbitrary set and $\hat{M} = \mathbb{R}_{\max} \times E / \{-\infty\}\times E$.  Let $\mathrm{cl}$ be an algebraic closure operator on $\hat{M}$.  For $S\subseteq E$ and define $\hat{M}|_S$ in the obvious way and define $\mathrm{cl}_S(T) = \mathrm{cl}(T) \cap \hat{M}|_S$.  Then $\mathrm{cl}$ satisfies the axioms listed in the conclusion of Proposition \ref{proposition: valuated closure axioms} if and only if $\mathrm{cl}_S$ satsifies these axioms for all finite $S\subseteq E$.
\end{lem}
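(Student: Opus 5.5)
The plan is to check the five axioms of Proposition \ref{proposition: valuated closure axioms} one by one in each direction. The main tool is the identity $\mathrm{cl}_S(T) = \mathrm{cl}(T)\cap \hat{M}|_S$. It also helps to note that, because $\mathrm{cl}$ is algebraic, every axiom need only be checked for finite $T$ (as in Remark \ref{remark: checking axioms on finite sets}). A finite $T$ lives in $\hat{M}|_S$ for some finite $S$, and we may enlarge $S$ freely.

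\emph{($\Rightarrow$)} Suppose $\mathrm{cl}$ satisfies the axioms and $S$ is finite. First we check that $\mathrm{cl}_S$ is an algebraic closure operator on $\hat{M}|_S$. Extensivity and monotonicity are clear, and finitarity is automatic since $\hat{M}|_S$ is infinite only in the scalar direction and $\mathrm{cl}$ is finitary. For idempotence, take $T\subseteq \hat{M}|_S$. Then
\[
\mathrm{cl}_S(T) \subseteq \mathrm{cl}_S(\mathrm{cl}_S(T)) \subseteq \mathrm{cl}(\mathrm{cl}(T))\cap\hat{M}|_S = \mathrm{cl}_S(T).
\]
The axioms transfer as follows.
\begin{enumerate}
\item Axiom (1) holds because $\mathbf{0}\in \hat{M}|_S$.
\item Axiom (2) holds because $\hat{M}|_S$ is stable under $\mathbb{R}_{\max}^\times$.
\item Axiom (3) holds because $\hat{M}|_S$ is downward closed, so the intersection of two downward closed sets is downward closed.
\item Axiom (5) transfers verbatim by intersecting with $\hat{M}|_S$.
\item Axiom (4) needs one check. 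Given $ax\in\mathrm{cl}_S(T\cup\{by\})$ with $x,y\in S$, apply (4) for $\mathrm{cl}$ to get $c\le b$. The element $cy$ lies in $\hat{M}|_S$ since $y\in S$, so both conclusions survive the intersection with $\hat{M}|_S$.
\end{enumerate}

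\emph{($\Leftarrow$)} Suppose every $\mathrm{cl}_S$ with $S$ finite satisfies the axioms. By finitarity, reduce each axiom for $\mathrm{cl}$ to finite $T$.
\begin{enumerate}
\item For (1), $\mathbf{0}\in\mathrm{cl}_\emptyset(\emptyset)\subseteq\mathrm{cl}(\emptyset)$.
\item For (2), given $v\in\mathrm{cl}(aT)$, choose a finite $S$ containing the supports of $T$ and of $v$. Then $v\in\mathrm{cl}_S(aT)=a\,\mathrm{cl}_S(T)\subseteq a\,\mathrm{cl}(T)$. The reverse inclusion follows by symmetry with $a^{-1}$.
\item For (3), if $v\in\mathrm{cl}(T)$ and $w\le v$, then $w$ has the same support as $v$. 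Taking $S$ to contain that support, we get $w\in\mathrm{cl}_S(T)$ by (3) for $\mathrm{cl}_S$.
\item For (4) and (5), given $ax\in\mathrm{cl}(T\cup\{by\})$, pick a finite $S\supseteq \mathrm{supp}(T)\cup\{x,y\}$. Then $ax\in\mathrm{cl}_S(T\cup\{by\})$, so the axiom for $\mathrm{cl}_S$ produces $c$ with the required memberships in $\mathrm{cl}_S(\cdot)\subseteq\mathrm{cl}(\cdot)$. Axiom (5) is the same with $y=x$.
\end{enumerate}

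The only real subtlety is justifying that $\mathrm{cl}_S(T) = \mathrm{cl}(T)\cap\hat{M}|_S$ really behaves as a restriction. In the reverse direction I also used silently that $T\subseteq\hat{M}|_S$ and $v\in\mathrm{cl}(T)\cap\hat{M}|_S$ give $v\in\mathrm{cl}_S(T)$. This is immediate from the definition $\mathrm{cl}_S(T)=\mathrm{cl}(T)\cap\hat{M}|_S$, so no compatibility lemma like Lemma \ref{lemma: restriction of closure operator} is needed here. I therefore expect the main obstacle to be only the bookkeeping in (4), namely making sure the witness $cy$ stays inside the chosen $\hat{M}|_S$. Choosing $S$ to contain $y$ settles this.
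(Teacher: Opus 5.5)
Your proposal is correct and follows the paper's proof: you transfer each axiom through $\mathrm{cl}_S(T)=\mathrm{cl}(T)\cap\hat{M}|_S$, reduce to finite $T$ by finitarity, and enlarge $S$ to contain $x,y$ so that the witness $cy$ stays in $\hat{M}|_S$. The paper writes this out only for axiom (4) and leaves the other axioms to the reader, and you fill them in correctly, up to two harmless slips in your check of axiom (3): an element $w\le v$ may be $\mathbf{0}$ rather than having the same support as $v$, and $S$ should also contain the support of $T$.
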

\begin{proof}
    To illustrate the technique, we consider axiom (4) of Proposition \ref{proposition: valuated closure axioms}, leaving the rest to the reader.  Suppose first that $\mathrm{cl}$ satisfies axiom (4), i.e. for any $T\subseteq \hat{M}$, any $a,b\in \mathbb{R}_{\max}$ and any $x, y\in E$, if $ax\in \mathrm{cl}(T\cup\{by\})$ then there exists $c\leq b$ such that $ax\in \mathrm{cl}(T \cup \{cy\})$ and $cy\in \mathrm{cl}(T\cup \{ax\})$.  
    
    Assume $T\subseteq\hat{M}|_S$ and $x,y\in S$ for some finite subset $S\subseteq E$ and that $ax\in \mathrm{cl}_S(T\cup\{by\})$.  Then $ax\in \mathrm{cl}(T\cup \{by\})$.  By axiom (4), we get $c\leq b$ such that $ax\in \mathrm{cl}(T \cup \{cy\})$ and $cy\in \mathrm{cl}(T\cup \{ax\})$.  Since $ax\in \hat{M}|_S$, $ax\in \mathrm{cl}(T\cup\{cy\})\cap \hat{M}|_S = \mathrm{cl}_S(T\cup\{cy\})$ and similarly $cy\in \mathrm{cl}_S(T\cup\{ax\})$.

    Conversely, suppose that $\mathrm{cl}_S$ satisfies axiom (4) for all finite $S\subseteq E$.  Let $T\subseteq \hat{M}$, $a,b\in\mathbb{R}_{\max}$ and $x, y\in E$ be such that $ax\in \mathrm{cl}(T\cup\{by\})$.  Due to Remark \ref{remark: checking axioms on finite sets}, we will assume $T$ is finite and hence that $T\subseteq\hat{M}|_S$ for some finite $S\subseteq E$. Enlarging $S$ if necessary, assume $x, y\in S$.  Now $ax\in \mathrm{cl}(T\cup\{by\}) \cap \hat{M}|_S = \mathrm{cl}_S(T\cup \{by\})$.  Applying axiom (4) to $\mathrm{cl}_S$, we get $c\leq b$ such that $ax\in \mathrm{cl}_S(T \cup \{cy\})\subseteq\mathrm{cl}(T \cup \{cy\})$ and $cy\in \mathrm{cl}_S(T\cup \{ax\})\subseteq\mathrm{cl}(T\cup \{ax\})$. 
\end{proof}

\begin{mythm}Let $E$ be an arbitrary set and $\hat{M} = \mathbb{R}_{\max} \times E / \{-\infty\}\times E$.  There is a one-to-one correspondence between infinite valuated matroids on $E$ and algebraic closure operators on $\hat{M}$ satisfying the conclusions of Proposition \ref{proposition: valuated closure axioms}.
\end{mythm}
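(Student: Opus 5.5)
We construct maps in both directions and check that they are mutually inverse, reducing each point to the finite case (Theorem \ref{proposition: valuated matroid closure cryptomorphism}) by means of the last two lemmas.

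\emph{From matroids to closure operators.} Given an infinite valuated matroid $M$ on $E$, its closure operator $\mathrm{cl}$ is algebraic by the lemma above. By Lemma \ref{lemma: restriction of closure operator}(2), $\mathrm{cl}(T)\cap\hat{M}|_S=\mathrm{cl}_S(T)$ for every finite $S$ and every $T\subseteq\hat{M}|_S$. Since each $\mathrm{cl}_S$ is the closure operator of the finite valuated matroid $M|_S$, Proposition \ref{proposition: valuated closure axioms} shows that each $\mathrm{cl}_S$ satisfies the axioms. By the preceding lemma, $\mathrm{cl}$ then satisfies the axioms as well.

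One point needs care here. The preceding lemma defines $\mathrm{cl}_S(T)=\mathrm{cl}(T)\cap\hat{M}|_S$ for arbitrary $T$, not only for $T\subseteq\hat{M}|_S$. The axioms for an operator on $\hat{M}|_S$ only involve $T\subseteq\hat{M}|_S$, so Lemma \ref{lemma: restriction of closure operator} suffices and the two notions of $\mathrm{cl}_S$ agree on the relevant inputs. I would state this matching explicitly.

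\emph{From closure operators to matroids.} Let $\mathrm{cl}$ be an algebraic closure operator satisfying the axioms. For each finite $S\subseteq E$, define $\mathrm{cl}_S(T)=\mathrm{cl}(T)\cap\hat{M}|_S$ for $T\subseteq\hat{M}|_S$. This is an algebraic closure operator on $\hat{M}|_S$.
\begin{enumerate}
\item Extensivity and monotonicity are immediate.
\item For idempotence, $\mathrm{cl}_S(\mathrm{cl}_S(T))\subseteq\mathrm{cl}(\mathrm{cl}(T))\cap\hat{M}|_S=\mathrm{cl}_S(T)$.
\item Finitarity is inherited from $\mathrm{cl}$.
\end{enumerate}
By the preceding lemma, $\mathrm{cl}_S$ satisfies the axioms. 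Theorem \ref{proposition: valuated matroid closure cryptomorphism} then gives a unique valuated matroid $M_S$ on $S$ with closure operator $\mathrm{cl}_S$.

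Next we check the compatibility $(M_S)|_{S'}=M_{S'}$ for $S'\subseteq S$. By Lemma \ref{lemma: restriction of closure operator}(1), the closure operator of $(M_S)|_{S'}$ is $T\mapsto\mathrm{cl}_S(T)\cap\hat{M}|_{S'}=\mathrm{cl}(T)\cap\hat{M}|_{S'}=\mathrm{cl}_{S'}(T)$. The uniqueness part of Theorem \ref{proposition: valuated matroid closure cryptomorphism} then forces $(M_S)|_{S'}=M_{S'}$. Here I use that restriction of a finite valuated matroid, in the sense of Definition \ref{definition: infinitem val mat}, corresponds to restricting circuits to those supported in $S'$. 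This is the notion used in Lemma \ref{lemma: restriction of closure operator}(1).

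\emph{The two maps are mutually inverse.}

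Starting from $M$, we form $\mathrm{cl}$ and then its restrictions $\mathrm{cl}(T)\cap\hat{M}|_S$. By Lemma \ref{lemma: restriction of closure operator}(2), these restrictions are exactly the closure operators of the $M|_S$. Finite injectivity then returns the same $M|_S$.

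Starting from $\mathrm{cl}$, we build $\{M_S\}$ and then the closure operator $\mathrm{cl}'(T)=\bigcup_S\mathrm{cl}_S(T\cap\hat{M}|_S)$.
\begin{itemize}
\item $\mathrm{cl}'(T)\subseteq\mathrm{cl}(T)$ holds by monotonicity.
\item For the converse, take $x\in\mathrm{cl}(T)$. Since $\mathrm{cl}$ is algebraic, $x\in\mathrm{cl}(T_0)$ for some finite $T_0\subseteq T$. Choose a finite $S$ containing the supports of $T_0$ and of $x$. Then $x\in\mathrm{cl}(T_0)\cap\hat{M}|_S=\mathrm{cl}_S(T_0)\subseteq\mathrm{cl}_S(T\cap\hat{M}|_S)\subseteq\mathrm{cl}'(T)$.
\end{itemize}

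I expect the main obstacle to be bookkeeping rather than new mathematics. One has to track carefully which version of $\mathrm{cl}_S$ (arbitrary input $T$ versus $T\subseteq\hat{M}|_S$) is meant at each step, and confirm that the restriction of valuated matroids agrees with restriction of closure operators. If the second point is not already covered by Lemma \ref{lemma: restriction of closure operator}(1), I would verify it directly from the circuit description of closure in Lemma \ref{lemma: circuits from valuated flats}.
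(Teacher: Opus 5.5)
Your proposal is correct and follows essentially the same route as the paper. In both, the forward direction uses Lemma \ref{lemma: restriction of closure operator}(2) together with the lemma that transfers the axioms between $\mathrm{cl}$ and the $\mathrm{cl}_S$. The reverse direction builds $M|_S$ from $\mathrm{cl}_S$ via Theorem \ref{proposition: valuated matroid closure cryptomorphism}, gets compatibility from Lemma \ref{lemma: restriction of closure operator}(1) plus finite uniqueness, and uses algebraicity to identify the resulting closure operator with $\mathrm{cl}$. The only difference is cosmetic: the paper first shows $\mathrm{cl}=\mathrm{cl}'$ on finite sets and then extends by algebraicity, while you prove both inclusions directly for arbitrary $T$.
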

\begin{proof}
    Given an infinite valuated matroid $M$ on $E$, we get a closure operator $\mathrm{cl}$ such that the closure operators $\mathrm{cl}_S(T) = \mathrm{cl}(T)\cap \hat{M}|_S$ are those associated to $M|_S$ (by Lemma \ref{lemma: restriction of closure operator}).  In particular these closure operators satisfy the conclusions of Proposition \ref{proposition: valuated closure axioms}.  Hence so does $\mathrm{cl}$.

    We have constructed a map from valuated matroids to algebraic closure operators satisfying the desired conditions. For injectivity of this map, observe that $\mathrm{cl}$ determines $\mathrm{cl}_S$ which in turn determines $M_S$, finally determining $M$. It remains to show surjectivity.  Let $\mathrm{cl}$ be an algebraic closure operator on $\hat{M}$ satisfying the conclusions of Proposition \ref{proposition: valuated closure axioms}.  We wish to show $\mathrm{cl}$ is the closure operator of an infinite valuated matroid.

    For finite $S\subseteq E$, let $\mathrm{cl}_S$ be given by $\mathrm{cl}_S(T) = \mathrm{cl}(T) \cap \hat{M}|_S$, and observe this is the closure operator of a valuated matroid, which we denote $M|_S$.  For $S'\subseteq S$ and $T\subseteq \hat{M}|_{S'}$, $\mathrm{cl}_{S'}(T) = \mathrm{cl}(T) \cap \hat{M}|_{S'} = \mathrm{cl}_S(T) \cap \hat{M}|_{S'}$.  By Lemma \ref{lemma: restriction of closure operator}, this implies $\mathrm{cl}_{S'}$ is the closure operator of $(M|_S)_{S'}$ and hence $(M|_S)_{S'} = M|_{S'}$.  Thus the family $M|_S$ yields a valuated matroid $M$.  Let $\mathrm{cl}'$ be the closure operator of $M$.  
    
    For finite $T\subseteq\hat{M}$, there is some $S'\subseteq E$ such that $T\subseteq \hat{M}|_{S'}$.  This then holds for all finite $S$ containing $S'$.  Then for such $S$, $\mathrm{cl}(T) \cap \hat{M}|_S = \mathrm{cl}_S(T) = \mathrm{cl}'(T) \cap \hat{M}|_S$.  Since $\hat{M} = \bigcup_{S\subseteq T \textrm{ finite; } S'\subseteq S} \hat{M}_S$, we obtain $\mathrm{cl} = \mathrm{cl}'$ on finite subsets.  Since both closure operators are algebraic, $\mathrm{cl} = \mathrm{cl}'$ on arbitrary subsets.  Hence $\mathrm{cl}$ is the closure operator of an infinite valuated matroid.
\end{proof}

\begin{mythm}There exists a one-to-one correspondence between projective equivalence classes of simple infinite valuated matroids and isomorphism classes of geometric $\mathbb{R}_{\max}$-modules.
\end{mythm}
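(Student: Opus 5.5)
The plan is to rerun the finite argument of Theorems \ref{theorem: geometric modules1} and \ref{theorem: geometric modules2}, with two substitutions: the infinite closure cryptomorphism (the preceding theorem of this appendix) replaces Theorem \ref{proposition: valuated matroid closure cryptomorphism}, and finite generation is dropped throughout. For an infinite valuated matroid $M$ on $E$, I would define $Q_M$ as the $\mathbb{R}_{\max}$-module of finitely generated flats. Here a flat means a set of the form $\mathrm{cl}(S)$, with $\mathrm{cl}$ the closure operator of $M$. Sum is join, $\mathrm{cl}(S)\vee\mathrm{cl}(T)=\mathrm{cl}(S\cup T)$, and scalar multiplication is $a\,\mathrm{cl}(S)=\mathrm{cl}(aS)$. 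The map is then $[M]\mapsto[Q_M]$.

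\textbf{Step 1: $Q_M$ is geometric.} The proof of Proposition \ref{proposition: $Q_M$ geometric} only ever handles finitely many finite sets $S,T,U$ at a time, and it uses only the axioms of Proposition \ref{proposition: valuated closure axioms}, Lemma \ref{lemma: covering semimodulear} and minimality arguments over finite sets. The preceding appendix theorem says the infinite closure operator satisfies those axioms. Lemma \ref{lemma: covering semimodulear} also goes through, since its proof uses only the axioms and induction on the finite set $T$.

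So strong torsion-freeness, atomicity (every $\mathrm{cl}(\{ax\})$ is an atom) and semimodularity transfer verbatim. Alternatively, any finite configuration lives in $\hat{M}|_S$ for some finite $S$, and one can pass there using Lemma \ref{lemma: restriction of closure operator}. The only part of that proposition that does not transfer is finite generation, and we no longer need it. Well-definedness on projective equivalence classes then goes exactly as in Theorem \ref{theorem: geometric modules2}. This needs the analogue of Lemma \ref{lemma: iso val mat}(3) for infinite matroids: a projective equivalence is a bijection $\psi:\hat M\to\hat M'$ compatible with scaling that restricts to projective equivalences $M|_S\cong M'|_{\phi(S)}$ for all finite $S$. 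By Lemma \ref{lemma: restriction of closure operator}, together with the finite Lemma \ref{lemma: iso val mat}(3), this is the same as preserving $\mathrm{cl}$.

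\textbf{Step 2: surjectivity.} Given a geometric module $N$ (not necessarily finitely generated), I would follow Theorem \ref{theorem: geometric modules1}. Let $E$ be a set of representatives of the $\mathbb{R}_{\max}^\times$-orbits of atoms; it may now be infinite. Identify $\hat M$ with the weak atoms, which is valid by torsion-freeness and Proposition \ref{proposition: approaches to atomicity}(1). Define $\mathrm{cl}(S)$ as the set of weak atoms bounded by a finite sum of elements of $S$.

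The arguments that $\mathrm{cl}$ is algebraic, that $\mathrm{cl}(S)\mapsto\sum_{x\in S}x$ is an isomorphism onto $N$ (using atomicity), and that axioms (1)--(5) hold (using semimodularity for (4) and strong torsion-freeness for (5)) never used finiteness of $E$. The preceding appendix theorem then produces an infinite valuated matroid with this closure operator. Simplicity follows as before from $\mathrm{cl}(\emptyset)=\{\mathbf 0\}$ and $\mathrm{cl}(ax)=\{bx: b\le a\}$. Here simplicity should be read as simplicity of each restriction $M|_S$, and this is checked on the at most two-element circuit sets, which live in finite restrictions.

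\textbf{Step 3: injectivity,} as in Theorem \ref{theorem: geometric modules2}. If $Q_M\cong N\cong Q_{M'}$, the construction gives scaling-compatible bijections $\hat M\to A_N$ and $\hat M'\to A_N$ carrying the closures to the closure on weak atoms. For this one must check that the weak atoms of $Q_M$ are exactly the $\mathrm{cl}(\{ax\})$, using simplicity. The composite is a closure-preserving scaling-compatible bijection, hence a projective equivalence by the extended Lemma \ref{lemma: iso val mat}.

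\textbf{Main obstacle.} I expect the main obstacle to be the bookkeeping around the infinite notion of projective equivalence, i.e.\ establishing the infinite version of Lemma \ref{lemma: iso val mat}(3). I would handle it by reducing to finite restrictions via Lemma \ref{lemma: restriction of closure operator} together with the fact that $\mathrm{cl}$ is algebraic.
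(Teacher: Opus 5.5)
Your proposal is correct and is essentially the paper's proof. The paper simply says to rerun the proof that $Q_M$ is geometric and Theorems~\ref{theorem: geometric modules1} and~\ref{theorem: geometric modules2}, with the appendix cryptomorphism in place of the finite one and the finite-generation steps dropped; you go somewhat further by making explicit two points the paper leaves implicit, namely the infinite notion of projective equivalence (via finite restrictions and Lemma~\ref{lemma: restriction of closure operator}) and the identification of the weak atoms of $Q_M$ with $\hat{M}$ for simple $M$.
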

\begin{proof}
The proof is the same as Theorems \ref{theorem: geometric modules1} and \ref{theorem: geometric modules2} and Proposition \ref{proposition: $Q_M$ geometric} minus the steps where we show finite generation is equivalent to finiteness of $E$.
\end{proof}

\bibliography{closure}\bibliographystyle{alpha}

\end{document}